\pdfoutput=1 

\documentclass[12pt]{extarticle}
\usepackage[utf8]{inputenc}
\usepackage[T1]{fontenc}

\usepackage{caption}
\oddsidemargin \evensidemargin
\usepackage{float}
\usepackage{resizegather}
\usepackage{amssymb}
\usepackage{pmboxdraw}
\usepackage{amsmath}
\usepackage{nccmath}
\usepackage{bbm}
\usepackage{amsthm}
\usepackage{dsfont}
\usepackage[dvipsnames]{xcolor}
\usepackage{graphicx}
\usepackage{alphabeta}
\usepackage{tikz,pgfplots}
\pgfplotsset{compat=1.18}
\usepackage{pgfplots}
\usepackage{comment}
\usepackage[margin=0.5cm]{caption}
\usepackage{hyperref}
\usepackage{subcaption}
\usepackage[all]{xy}
\usepackage[greek,english]{babel}
\usepackage[export]{adjustbox}
\usepackage{hyperref}
\usepackage{mathrsfs}  
\hypersetup{
    colorlinks=true,
    linkcolor=myblue,
    filecolor=magenta,      
    urlcolor=myblue,
    citecolor=BurntOrange,
}
\usepackage{enumitem}
\usepackage{booktabs}
\usepackage[all]{xy}

\definecolor{myblue}{rgb}{0.25,0.45,0.99}
\definecolor{myorange}{rgb}{0.8500, 0.3250, 0.0980}
\definecolor{myyellow}{rgb}{0.9290, 0.6940, 0.1250}
\definecolor{mypurple}{rgb}{0.4940, 0.1840, 0.5560}
\definecolor{mygreen}{rgb}{0.4660, 0.6740, 0.1880}

\usepackage[activate={true,nocompatibility},final,tracking=true,kerning=true,spacing=true,factor=1100,stretch=10,shrink=10]{microtype}

\usepackage{listings}
\lstdefinelanguage{Macaulay2}{
  morekeywords={matrix, apply, entries, sum, product, ideal, flatten, print, factor, subsets, WeylAlgebra, holonomicRank, DsingularLocus, gens, QQ},
  sensitive=true,
  morecomment=[l]{--},
  morestring=[b]",
}

\usepackage{algorithm}
\usepackage{algpseudocode}

\newtheorem{theorem}{Theorem}[section]
\newtheorem{theorem*}[theorem]{Theorem*}
\newtheorem{question}[theorem]{Question}
\newtheorem{lemma}[theorem]{Lemma}
\newtheorem{corollary}[theorem]{Corollary}
\newtheorem{proposition}[theorem]{Proposition}

\newtheorem{thm/conj}[theorem]{Theorem/Conjecture}

\newtheorem{conjecture}[theorem]{Conjecture}

\theoremstyle{definition}
\newtheorem{examplex}[theorem]{Example}
\newenvironment{example}
{\pushQED{\qed}\examplex}
{\popQED\endexamplex}

\newtheorem{definition}[theorem]{Definition}
\newtheorem{remark}[theorem]{Remark}

\theoremstyle{remark}
\newcommand{\Erec}{E_L}
\renewcommand{\P}{\mathbb{P}}
\newcommand{\C}{\mathbb{C}}

\DeclareMathOperator{\FL}{F}
\def\Omegarel#1{\Omega_{\rm rel}^{#1}}

\newcommand\restr[2]{{
  \left.\kern-\nulldelimiterspace 
  #1
  \vphantom{\big|} 
  \right|_{#2}
  }}

\usepackage{titling} 
\date{\today}
\usepackage{cleveref}

\let\OLDthebibliography\thebibliography
\renewcommand\thebibliography[1]{
  \OLDthebibliography{#1}
  \setlength{\parskip}{0.5pt}
  \setlength{\itemsep}{0.8pt plus 0.5ex}
}

\title{\textbf{Principal Matroid Determinants}}
\author{Saiei-Jaeyeong Matsubara-Heo and Simon Telen}

\begin{document}

\maketitle

\begin{abstract}
\noindent We develop a theory of principal determinants and hypergeometric systems for realizable matroids. Our framework parallels the toric theory of Gel'fand, Kapranov, and Zelevinsky (GKZ), but with the combinatorics of matroids and their flats replacing the usual role of polytopes and their faces. 
In this analogy, the toric variety is replaced by a reciprocal linear space. The {principal $A$-determinant} is replaced by the {principal matroid determinant}, defined as a specialization of a resultant. 
The GKZ hypergeometric system is replaced by the {matroid hypergeometric system}, a holonomic $D$-module of combinatorial nature whose singular locus is conjectured to be the principal matroid~determinant.
\end{abstract}

\section{Introduction}

The seminal book \cite{GKZbook} by Gel'fand, Kapranov and Zelevinsky develops a theory of discriminants, resultants and determinants in the language of modern algebraic geometry. Central concepts in that book include projective duality (discriminants) and Chow forms (resultants). There is considerable emphasis on the case of toric varieties, where the \emph{principal $A$-determinant} $E_A$ plays a key role \cite[Chapter 10]{GKZbook}. Among the crucial results in \cite{GKZbook} are the factorization of $E_A$ in terms of $A$-discriminants, and the fact that the Newton polytope of $E_A$ is the secondary polytope. The latter relates principal $A$-determinants to the theory of regular triangulations, which lies at the heart of tropical geometry \cite{maclagansturmfels}. Another salient feature of $E_A$ is that its vanishing locus is the Euler discriminant of a family of hypersurfaces with a fixed monomial support \cite{esterov2013discriminant}. This means that $E_A$ vanishes precisely when the Euler characteristic of such a hypersurface is different from its generic value. Recently, Euler discriminants have found applications in algebraic statistics \cite{amendola2019maximum} and particle physics \cite{DlapaHelmerPapathanasiouTellander,fevola2024principal}.

In this paper, we propose a matroid analog of the GKZ theory.
Consider a $d$-dimensional linear space $L \subset \mathbb{P}^n$. 
The hyperplane arrangement $L \cap \{x_0x_1 \cdots x_n = 0 \}$ in $L$ gives rise to a matroid $M(L)$. 
Our main object is the \emph{principal matroid determinant} $E_L$, a polynomial whose factorization and Newton polytope are determined by the combinatorics of $M(L)$. 
It shares many of its features with the principal $A$-determinant, see Theorems \ref{thm:mainintro} and \ref{thm:mainEA}. 

To define the principal matroid determinant, we introduce some notation. 
Let $T = (\mathbb{C}^\times)^{n+1} / \mathbb{C}^\times \simeq (\mathbb{C}^\times)^n$ be the dense torus of the $n$-dimensional complex projective space $\mathbb{P}^n$. That is, $T \subset \mathbb{P}^n$ consists of all points $(x_0:  \cdots: x_n) \in \mathbb{P}^n$ which can be represented by $n+1$ nonzero homogeneous coordinates $x_i$. For any $k \in \mathbb{Z}$, we define the map 
\begin{equation} \label{eq:phi_k} \phi_k : \mathbb{P}^n \dashrightarrow \mathbb{P}^n, \quad \phi(x_0:\cdots:x_n) \, = \, (x_0^k : \cdots : x_n^k). \end{equation}
For a projective variety $X \subseteq \mathbb{P}^n$, we shall write $X^k = \phi_k(X)$ for $k \geq 0$, and $X^k = \overline{\phi_k(X \cap T)}$ for $k < 0$. 
The reciprocal linear space $L^{-1} = \overline{\phi_{-1}(L \cap T)}$ consists of the coordinate-wise reciprocals of points in $L$.
Such reciprocal linear spaces are central in our story. Their coordinate ring and connection to matroids were studied in \cite{proudfoot2006broken}. The definition of $E_L$ also involves $L^{-2}$.

We now apply the classical {$X$-discriminant} construction from \cite{GKZbook} to $X = L^{-1}$. 
Let $\nabla(L^{-1}) \subset (\mathbb{P}^n)^\vee$ be the projective dual variety of $L^{-1}$. We call $\nabla(L^{-1})$ the matroid discriminant variety of $L$. When $\nabla(L^{-1})$ is a hypersurface, we write $\Delta(L^{-1})\in \mathbb{C}[z_0, \ldots, z_n]$ for its defining equation. If $\nabla(L^{-1})$ is not a hypersurface, then we set $\Delta(L^{-1}) = 1$. 

\begin{example}[$d = 2, \, n = 3$] \label{ex:intro}
    Consider the reciprocal row span $L^{-1}$ of the $3\times 4$~matrix
    \begin{equation}\label{eq:generic matroid}
         A \, = \,  \begin{pmatrix}
        -1 &1 & 0 & 0 \\ -1 & 0 & 1 & 0  \\ 
        -1 & 0 & 0 & 1 
    \end{pmatrix}; \quad L^{-1} \, = \, \overline{\{(x_0^{-1}:x_1^{-1}:x_2^{-1}:x_3^{-1}) \in  \mathbb{P}^3\, : \, x \in {\rm Row}(A) \cap T \} }.
        \end{equation}
    This is {Cayley's cubic surface}, defined by the equation $x_0x_1x_2 + x_1x_2x_3 + x_0x_2x_3+x_0x_1x_3 = 0$. 
    Its dual surface $\nabla(L^{-1})$ is a quartic surface in $(\mathbb{P}^3)^\vee$, known as the {Steiner surface}. 
    Using coordinates $z_0, z_1, z_2, z_3$ on $(\mathbb{P}^3)^\vee$, the defining equation of $\nabla(L^{-1})$ is $\Delta(L^{-1}) = 0$, where  
    \begin{equation}\label{eq:matroid discriminant for a uniform matroid}
     \hspace{-0.9cm} 
     \Delta(L^{-1}) \, = \,
     \sum_{i=0}^3z_i^4-4\sum_{i\neq j}z_i^3z_j+6\sum_{i<j}z_i^2z_j^2+4\sum_{i\neq j\neq k\neq i}z_i^2z_jz_k-40\,z_0z_1z_2z_3.
     \end{equation}
    The varieties $L^{-1}$ and $\nabla(L^{-1})$ are shown in Figure \ref{fig:cayleysteiner}, in the charts $\sum_i x_i = 1$ and $\sum_i z_i = 1$. 
\end{example}

For each flat $F \subset \{0, \ldots, n\}$ of the matroid $M(L)$ represented by $L$, we write $L_F$ for the projection of $L$ to the coordinate subspace indexed by $F$. The matroid discriminant variety of $L_F$ is $\nabla(L_F^{-1}) \subset (\mathbb{P}^{|F|-1})^\vee$. Its defining polynomial $\Delta(L_{F}^{-1}) \in \mathbb{C}[z_i \, : \, i \in F]$ is expressed in the $z$-variables which are indexed by $F$. If $F = \{i\}$ is a singleton, then we set $\Delta(L_{F}^{-1}) = z_i$. 
Finally, let ${\cal R}(L^{-2}) \subset {\rm Mat}_{d+1,n+1}(\mathbb{C})$ be the $L^{-2}$-resultant hypersurface, as defined in \cite[Chapter 3, Section 2B]{GKZbook}. Its defining equation is a polynomial in the entries $u_{ij}, \, i = 0, \ldots, d, \, j = 0, \ldots, n$ of a generic $(d+1) \times (n+1)$ matrix $U$, denoted by ${\rm Res}(L^{-2}) \in \mathbb{C}[u_{ij}]$. 

The principal matroid determinant $\Erec \in \mathbb{C}[z_0, \ldots, z_n]$ is obtained by specializing ${\rm Res}(L^{-2})$ as follows: $U = A_z := A \cdot {\rm diag}(z_0, \ldots, z_n)$. Here $A \in \mathbb{C}^{(d+1) \times (n+1)}$ is a matrix whose projectivized row span is $L$, and ${\rm diag}(z_0, \ldots, z_n)$ is a diagonal matrix with diagonal entries $z_i$. 
\begin{theorem} \label{thm:mainintro}
    Let $L \subset \mathbb{P}^n$ be a linear space of dimension $d< n$, not contained in a coordinate hyperplane. The principal matroid determinant $\Erec$ has the following properties. 
    \begin{enumerate}
    \itemsep0em 
        \item $\Erec$ has degree $(d+1)\, 2^{d-c(L)+1} \, \mu(L)$, where $c(L)$ is the number of connected components of the matroid~$M(L)$ and $\mu(L) = \deg(L^{-1})$ is its M\"obius invariant. (Theorem~\ref{thm:degErec}) 
        \item The radical of $\Erec$ is the product of matroid discriminants $\Delta(L_{F}^{-1})$, where $F$ ranges over all flats of the matroid $M(L)$. (Theorem \ref{thm:factorization})
        \item The Newton polytope of $E_L$ is a generalized permutohedron. (Theorem \ref{thm:newtEL})
        \item The zero locus of $\Erec$ consists of all points $z \in (\mathbb{P}^n)^\vee$ such that the topological Euler characteristic of the hyperplane section $\{ x \in L^{-1} \cap T \, : \, z_0 x_0 + \cdots + z_n x_n = 0 \}$ differs from its generic value. (Theorem \ref{thm:ED})
    \end{enumerate}
\end{theorem}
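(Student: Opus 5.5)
The theorem bundles four results, so I would prove each part separately by transporting the GKZ arguments for $E_A$ to the reciprocal linear space $X = L^{-1}$. The starting point is the Cayley trick and the Chow-form description of ${\rm Res}(L^{-2})$. The $L^{-2}$-resultant vanishes at $U$ exactly when the $d+1$ linear forms given by the rows of $U$ have a common zero on $L^{-2}$. After the specialization $U = A_z$, I would substitute $y = x^{-2}$ with $x \in L^{-1}$. If $x = a^{-1}$ for $a \in L \cap T$, the row conditions $\sum_j A_{ij} z_j x_j^2 = 0$ become the critical-point equations of the linear form $\sum z_j x_j$ restricted to $L^{-1} \cap T$. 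Indeed, $\sum_j A_{ij} z_j x_j^2 = -\partial_{t_i} \sum_j z_j/(tA)_j$ in coordinates $a = tA$, up to sign. So on the torus, $\Erec(z) = 0$ iff the hyperplane section $\{z \cdot x = 0\} \cap L^{-1} \cap T$ is singular, which is the defining condition of $\nabla(L^{-1})$. Points of $L^{-2}$ outside $T$ would be handled by stratifying $L^{-2}$ by coordinate subspaces. By the structure of reciprocal linear spaces from \cite{proudfoot2006broken}, the boundary strata of $L^{-1}$ in the coordinate subspace indexed by $F$ are governed by flats: the stratum through a flat $F$ is (a cone over) $L_F^{-1}$, with the coordinates outside $F$ vanishing. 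This yields the set-theoretic factorization in part 2, with singleton flats giving the coordinate factors $z_i$. Upgrading to the statement about the radical also requires showing that each $\Delta(L_F^{-1})$ actually occurs, whenever it is a hypersurface, by a genericity argument on each stratum.

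For part 1, I would use the degree formula for resultants: ${\rm Res}(X)$ has degree $\deg X$ in each of the $d+1$ rows of $U$. Since $A_z$ is linear in $z$, a naive count gives $\deg \Erec = (d+1)\deg(L^{-2})$, provided the specialization does not vanish identically. So the real task is to compute $\deg(L^{-2})$. I expect $\deg(L^{-2}) = 2^{d-c(L)+1}\mu(L)$. The map $\phi_2 \colon L^{-1} \to L^{-2}$ is finite of degree $2^{d}/(\text{generic fiber})$; the fiber is the group of sign vectors preserving $L$, which has order $2^{c(L)-1}$ projectively. Pulling back a hyperplane multiplies degree by $2$ in each of the $d$ dimensions, which gives the count. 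I also need to justify that the specialization is nonzero; a generic $z$ suffices, by the Euler characteristic count below.

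For part 4, I would follow Esterov's Euler discriminant argument. By the Varchenko/Orlik–Terao count, the generic number of critical points of $\sum z_j x_j$ on $L^{-1} \cap T$ equals the signed Euler characteristic of the complement. The Euler characteristic of the section jumps exactly when critical points escape or collide, including at infinity. Through the stratification above, escaping to infinity corresponds to degeneration along some flat stratum. Part 2 then identifies this jump locus with $V(\Erec)$.

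For part 3, I would compute the tropicalization. The Newton polytope of a product is the Minkowski sum of the Newton polytopes of its factors, and the Newton polytope of the resultant specialization is a fiber-type (Chow) polytope of $L^{-2}$. Its normal fan should be coarsened by the Bergman fan of $M(L)$, since tropically $L^{-2} = -2\,\mathrm{trop}(L)$. Every edge direction should then be of the form $e_i - e_j$, which characterizes generalized permutohedra. The main obstacle, I expect, is part 2 with exact multiplicities and boundary behavior. In particular I need to show that no spurious components arise from strata of $L^{-2}$ not coming from flats, and that the degree count in part 1 is not destroyed by the specialization. For this I would first try an explicit resolution: a wonderful compactification of $L \cap T$ along flats, where the boundary divisors are indexed exactly by flats.
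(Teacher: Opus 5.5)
Your plan for parts 1 and 2 is the paper's. You get $\deg E_L=(d+1)\deg L^{-2}$ from the Chow form. You get $\deg L^{-2}=2^d\mu(L)/2^{c(L)-1}$ because the generic fibre of $\phi_2$ on $L^{-1}$ is the group of sign vectors stabilizing $L$; note that it is $\deg L^{-2}$, not the degree of the map, that equals $2^d\mu(L)$ divided by the fibre size. And you get $V(E_L)=\bigcup_F\nabla(L^{-1}\cap\Lambda_F)$ from the equivalence of $A_z\cdot x^2=0$ with tangency, plus the flat stratification of $L^{-2}$. This equality also settles your side worries. Each $\nabla(L^{-1}\cap\Lambda_F)$ has dimension at most $n-1$, so $E_L\not\equiv 0$. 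Each one that is a hypersurface is automatically a component of $V(E_L)$, so the radical statement needs no extra genericity argument.

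The genuine gap is part 3. You identify ${\rm Newt}(E_L)$ with a Chow polytope of $L^{-2}$, but $E_L$ is the Chow form evaluated at the special matrix $A\,{\rm diag}(z)$. Its Pl\"ucker coordinates $p_I(A)\,z^I$ vanish for every non-basis $I$, so at best you get a containment. The claim that all edges are parallel to some $e_i-e_j$ is not argued at all, and a complete normal fan cannot be ``coarsened by'' the $d$-dimensional Bergman fan.

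The missing idea is to use your own remark on products together with part 2. This gives ${\rm Newt}(E_L)=\sum_F m_F\,{\rm Newt}(\Delta(L_F^{-1}))$, so it suffices to show that every matroid discriminant which is a hypersurface has a dilated simplex as Newton polytope. The paper proves this with an analogue of the Horn--Kapranov uniformization:
\begin{itemize}
\item The hyperplanes tangent to $L^{-1}$ at $\ell^{-1}(t)$ are exactly $z=\phi_2(t^\top A)\star v$ with $v\in\ker A$.
\item Hence $\nabla(L^{-1})=L^2\star L^\perp$, and ${\rm Trop}\,\nabla(L^{-1})={\rm Trop}(L)+{\rm Trop}(L^\perp)={\rm Trop}(L\star L^\perp)$.
\item Since $\sum_i(t\cdot a_i)(b_i\cdot u)=t^\top ABu=0$, the variety $L\star L^\perp$ lies in the hyperplane $z_0+\cdots+z_n=0$. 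In the coloop-free hypersurface case it equals that hyperplane.
\item So its tropicalization is the tropical hyperplane, and ${\rm Newt}(\Delta(L^{-1}))$ is a dilated standard simplex.
\end{itemize}
Minkowski sums of dilated simplices $\Delta_F$ are generalized permutohedra.

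Part 4, as written, is a heuristic. Your sentence that the Euler characteristic jumps exactly when critical points escape, and that escape corresponds to a flat stratum, is the entire content of the theorem. The paper obtains it from $D$-modules:
\begin{itemize}
\item For generic $u$, $\nabla_\chi$ is the singular locus of $M^{\rm hyp}_L(u)$.
\item An inverse Fourier transform identifies its characteristic variety with that of $\int_\iota\mathscr{M}(u)$.
\item Franecki--Kapranov compute the latter as $\lim_{s\to 0}\Lambda^\#_s$, where $\Lambda^\#$ is parametrized by $(x,\,p\star x^{-2}+s\star x^{-1})$ with $x\in L^{-1}\cap T$ and $p\in L^\perp$.
\item A curve-germ computation shows this limit is $\bigsqcup_F{\rm Con}(L^{-1}\cap T_F)$. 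If $x(\epsilon)$ tends to $T_F$, convergence of $z(\epsilon)$ forces ${\rm ord}\,p_i(\epsilon)>0$ for $i\notin F$, so the limit of $p_F$ lies in $L_F^\perp$. Conversely, every point of ${\rm Con}(L^{-1}\cap T_F)$ is attained.
\end{itemize}
A critical-point version is conceivable, but it needs several things your proposal lacks. You would have to use Huh's theorem for the very affine variety $(L^{-1}\cap T)\setminus H_z$; Varchenko/Orlik--Terao do not apply, since this is not an arrangement complement. The function must be a degree-zero master function $\langle z,x\rangle^{-s-d-1}\prod_j x_j^{-u_j}$ with generic exponents, not the linear form $\sum_j z_jx_j$. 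You would need to observe that only escape to the boundary, not collision, can lower the count. Finally, you would have to carry out exactly the limit computation above, in both directions.
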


\begin{example} \label{ex:introcontd}
    For $L^{-1}$ as in \eqref{eq:generic matroid}, the surface $L^{-2} \subset \mathbb{P}^3$ has degree 12. Its defining polynomial~is 
    \[ f_{4,4,4,0} -4 \, (f_{4,4,3,1} + f_{4,4,1,3} +f_{4,3,4,1} ) + 6 \, (f_{4,4,2,2} + f_{4,2,4,2}) + 4 \, (f_{4,3,3,2} +f_{4,3,2,3} + f_{4,2,3,3}) - 40 \, f_{3,3,3,3},\]
    where $f_{i,j,k,\ell}$ is the sum of all monomials in the orbit of $x_0^ix_1^jx_2^kx_3^\ell$ under the action of the cyclic group, e.g., $f_{4,4,4,0} = x_0^4 x_1^4 x_2^4 + x_0^4 x_1^4 x_3^4 + x_0^4 x_2^4 x_3^4 + x_1^4 x_2^4 x_3^4$. The resultant polynomial ${\rm Res}(L^{-2})$ is a polynomial in the entries of a generic $3 \times 4$ matrix 
    \[ U \, = \, \begin{pmatrix}
        u_{00} & u_{01} & u_{02} & u_{03} \\ 
        u_{10} & u_{11} & u_{12} & u_{13} \\ 
        u_{20} & u_{21} & u_{22} & u_{23}
    \end{pmatrix}.\]
    It vanishes on all such matrices for which there exists $x \in L^{-2}$ satisfying $U \cdot x = 0$, where $x = (x_0, x_1, x_2, x_3)$ is thought of as a column vector of length $4$. One obtains ${\rm Res}(L^{-2})$ by substituting the signed $3 \times 3$ minors of $U$ into the defining equation of $L^{-2}$. The result is an irreducible polynomial of degree 36 in $u_{ij}$. It has $2 \, 129 \, 137$ terms. The principal matroid determinant is obtained by specializing this polynomial as follows: substitute $U$ by
    \[ \begin{pmatrix}
        -1 &1 & 0 & 0 \\ -1 & 0 & 1 & 0  \\ 
        -1 & 0 & 0 & 1 
    \end{pmatrix} \cdot {\rm diag}(z_0, z_1, z_2, z_3) \, = \, \begin{pmatrix}
        -z_0 & z_1 & 0 & 0 \\ 
        -z_0 & 0 & z_2 & 0  \\ 
        -z_0 & 0 & 0 & z_3 
    \end{pmatrix}.\]
    The result is the reducible polynomial $\Erec = z_0^8 \, z_1^8 \, z_2^8 \, z_3^8 \, \Delta(L^{-1})$. 

    This computation matches Theorem \ref{thm:mainintro}. We have $d = 2, c(L) = 1$ and $\mu(L) = \deg (L^{-1}) = 3$, which gives $\deg(\Erec) = 36$. The uniform matroid $M(L)$ has ten non-empty flats: four of rank one, six of rank two and one of rank three. The reciprocal linear spaces $L_{F}^{-1}$ corresponding to flats of rank two are isomorphic to $\mathbb{P}^1$. Their matroid discriminants are $1$. The rank-one flats contribute the factors $z_i^8$ in $\Erec$. The Newton polytope of $E_L$ is a dilated standard simplex, which is indeed a generalized permutohedron. For Theorem \ref{thm:mainintro}(4), see Example \ref{ex:1.3}. 
    \begin{figure}
        \centering
        \includegraphics[height = 6cm]{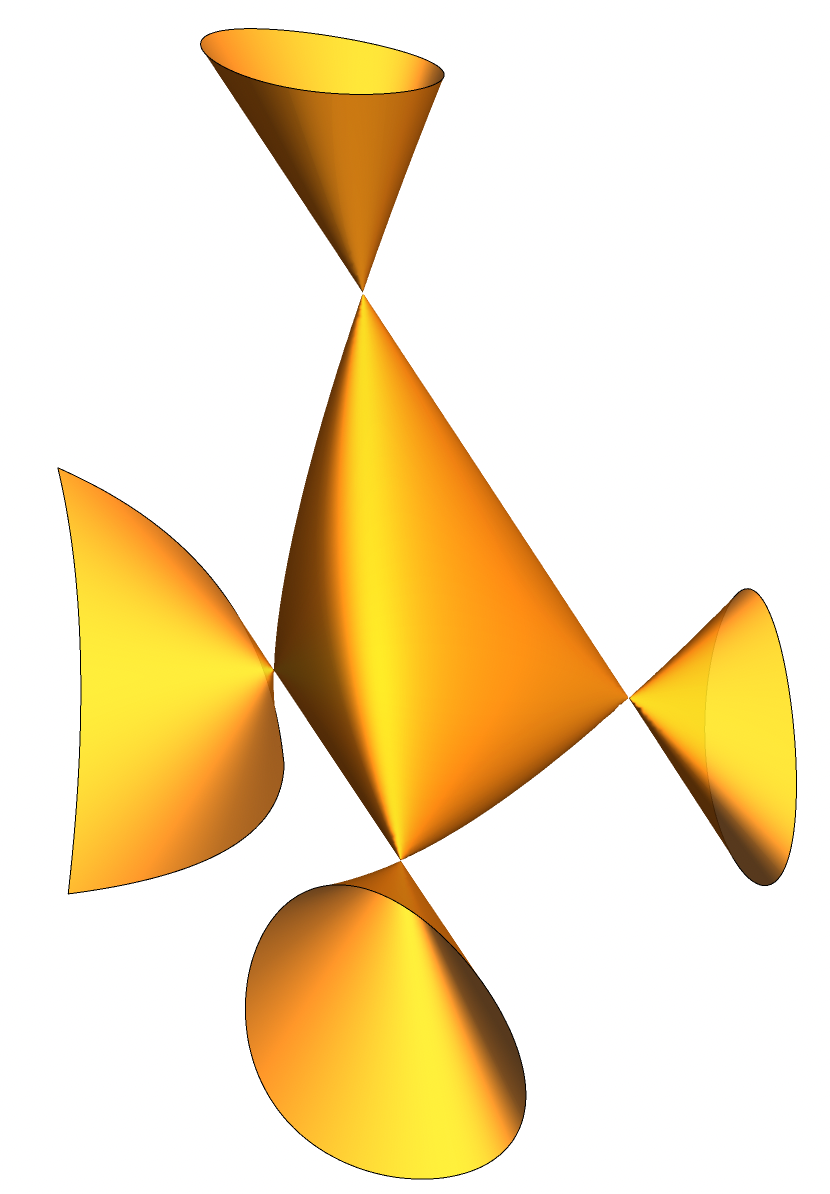} \quad \quad \quad 
        \includegraphics[height = 6cm]{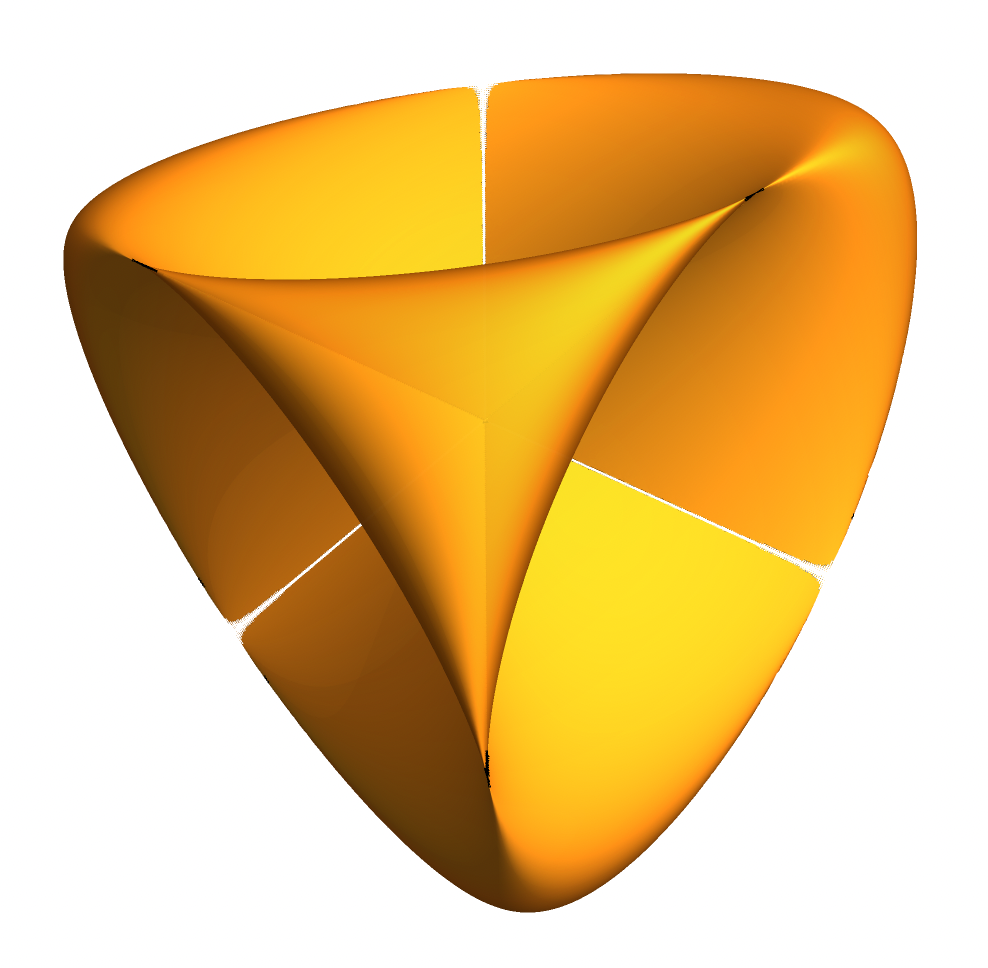} 
        \caption{The Cayley surface (left) and the Steiner surface (right) are each other's dual. }
        \label{fig:cayleysteiner}
    \end{figure}
\end{example}

We explain a different interpretation of point 4 in Theorem \ref{thm:mainintro} in terms of the hyperplane arrangement associated with our matroid $M(L)$. For $i = 0, \ldots, n$, let $\ell_i(t) = a_{0i} \, t_0 + \cdots + a_{di} \,  t_d$ be the linear forms encoded by the columns of $A$. These form an arrangement of $n+1$ hyperplanes denoted by ${\cal A} = V(\ell_0 \cdots \ell_n) \subset \mathbb{P}^d$. The map $\ell^{-1}: \mathbb{P}^d\setminus {\cal A} \rightarrow L^{-1} \cap T$ given by $t \mapsto (\ell_0(t)^{-1}: \cdots : \ell_n(t)^{-1})$ is an isomorphism. The hyperplane sections appearing in Theorem \ref{thm:mainintro}(4) are identified with a family of hypersurfaces in $\mathbb{P}^d \setminus {\cal A}$:
\begin{equation}\label{eq:family of hypersurfaces} V_z \, = \, \Big \{ t \in \mathbb{P}^d \setminus {\cal A} \, : \, \frac{z_0}{\ell_0(t)} + \cdots + \frac{z_n}{\ell_n(t)} \, = \, 0 \Big \}. \end{equation}
We have that $\ell^{-1}(V_z) = \{ x \in L^{-1} \cap T \, : \, z_0 x_0 + \cdots + z_n x_n = 0 \}$.
Theorem \ref{thm:mainintro}(4) states that $\Erec = 0$ defines the Euler discriminant locus of this family, which is a first step towards the Euler stratification from \cite{telen2024euler}. This will be proved in Section \ref{sec:6}.

\begin{figure}[t]
        \centering
        \includegraphics[width=0.9\linewidth]{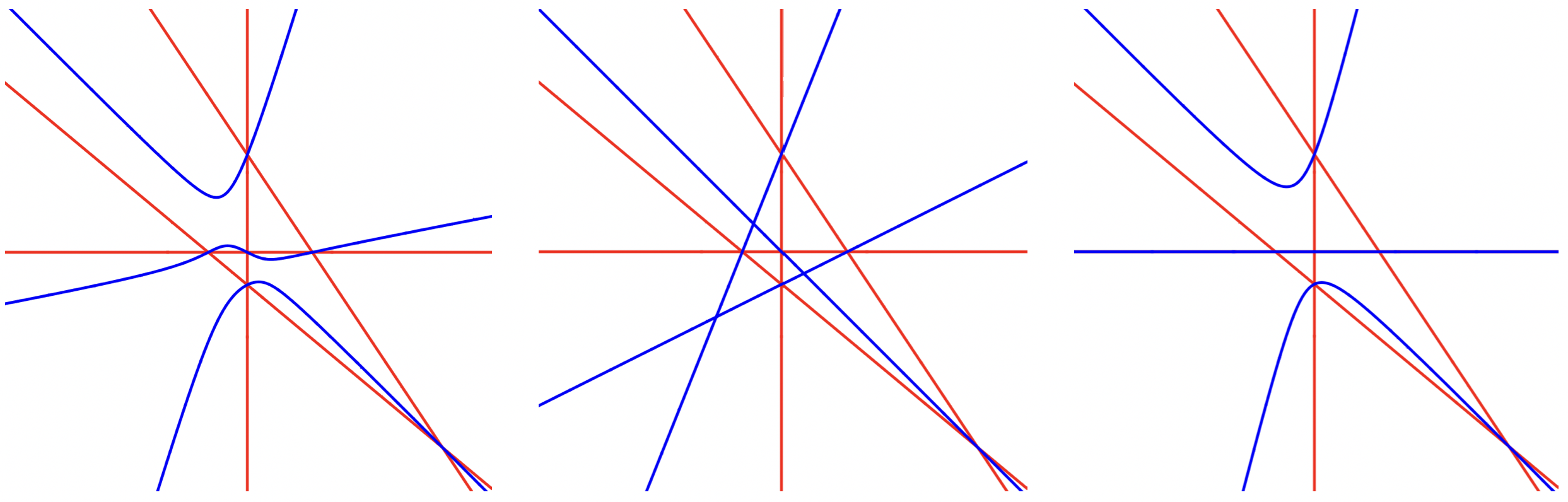}
        \caption{The cubic curve ${V}_z$ (blue) and the line arrangement of $M(L)$ (red) in the chart $3t_0+2t_1+8t_2 \neq 0$. Left: $z = (1,1,1/2,1)$, middle: $z= (1,1,1,1)$, right: $z=(1,1,0,1)$. }
        \label{fig:eulerdisc}
    \end{figure}
    
\begin{example}\label{ex:1.3}
    Using the matrix $A$ from \eqref{eq:generic matroid}, to each $(z_0,z_1,z_2,z_3) \in \mathbb{C}^4$ we associate a curve 
    \begin{equation}\label{eq:V_z}
     V_z \, = \, \Big \{t \in \mathbb{P}^2 \setminus {\cal A} \, : \, -\frac{z_0}{t_0+t_1+t_2} + \frac{z_1}{t_0} + \frac{z_2}{t_1} + \frac{z_3}{t_2}  \, = \, 0 \Big \}.
    \end{equation}
    Here ${\cal A} \subset \mathbb{P}^2$ is the arrangement of four lines given by $(t_0+t_1+t_2)t_0t_1t_2 = 0$.
    Examples for three different choices of $z$ are shown in Figure \ref{fig:eulerdisc}. The restriction of $\ell^{-1}: \mathbb{P}^2 \setminus {\cal A} \rightarrow \mathbb{P}^3$ to $V_z$ identifies our curve with $\{ x \in L^{-1} \cap T \, : \, z_0x_0 + \cdots + z_3 x_3 = 0\}$, where $L^{-1}$ is the Cayley cubic. The family of curves obtained in this way is the three-dimensional linear system of elliptic curves passing through the six intersection points of the four lines, corresponding to the six rank-two flats of $M(L)$. The Euler discriminant of this family consists of all $z \in \mathbb{P}^3$ for which the curve $V_z$ has a non-generic topological Euler characteristic. Let us give a geometric intuition of why the Euler characteristic is non-generic when any of the factors of $E_L$ vanishes. First, a generic curve in our linear system (see Figure \ref{fig:eulerdisc}, left) is a smooth elliptic curve ${\cal E}$ with six punctures. Hence, the generic value of the Euler characteristic is $\chi(V_z) = \chi({\cal E}) - 6 = -6$. For $z \in \nabla(L^{-1})$, the elliptic curve is singular. For instance, each nodal singularity increases $\chi(V_z)$ by one. In the middle part of Figure \ref{fig:eulerdisc}, we chose $z = (1,1,1,1) \in \nabla(L^{-1})$, for which $V_z$ is a union of three lines in $\mathbb{P}^2 \setminus {\cal A}$. There are three nodal singularities, and thus $\chi(V_z) = -6 + 3 = -3$. If $z_i = 0$, then our cubic equation has $\ell_i$ as a factor, so $V_z$ is a plane conic with five punctures, see Figure \ref{fig:eulerdisc} (right). The topological Euler characteristic is $\chi(V_z) = 2-5 = -3$. We note that the Euler discriminant of this example was computed in \cite[Section 5.1]{matsubara2025hypergeometric}.
\end{example}

As emphasized in the Preface of \cite{GKZbook}, the main interest of GKZ in the principal $A$-determinant $E_A$ stems from the fact that it encodes the singular locus of a holonomic $D$-module, called the {$A$-hypergeometric system} or GKZ system \cite[Corollary 1.9]{GKZbookchapter}. 
In the words of these authors, that system is a ``quantization'' of discriminants. GKZ systems have given rise to a substantial body of research in the theory of hypergeometric functions \cite{Matusevich-Miller-Walther,ReicheltSchulzeSevenheckWalther2021,Schulze_Walther}.

An important consequence of the fact that $E_A$ defines the singular locus of the GKZ system is that its vanishing locus contains all singularities of the Euler integrals studied in \cite{GKZ}.
In this article, we use recent results from \cite{matsubara2025hypergeometric} to connect the principal matroid determinant with Euler integrals of the following form: 
\begin{equation} \label{eq:eulerint_intro} g_\Gamma(z) \, = \, \int_\Gamma \left( \frac{z_0}{\ell_0(t)} + \cdots + \frac{z_n}{\ell_n(t)}\right)^{-s} \, \prod_{i = 0}^n \ell_i(t)^{u_i} \, \Omega(t).\end{equation}
Here $\Omega(t) = \sum_{i = 0}^d (-1)^i \, t_i  \bigwedge_{j \neq i} {\rm d}t_i$ and the exponents $u_i, s$ are complex numbers satisfying $u_0 + \cdots + u_n + s + d+ 1 = 0$. The integral is to be interpreted as a twisted period of the very affine variety $(\mathbb{P}^d \setminus {\cal A}) \setminus V_z$, where $V_z$ is the hypersurface defined by $\sum_i z_i \,  \ell_i^{-1}(t) = 0$ in $\mathbb{P}^d \setminus {\cal A}$, see for instance \cite[Section 3]{Matsubara-Heo2023Four}. In particular, $\Gamma$ is any twisted $d$-cycle on $(\mathbb{P}^d \setminus {\cal A}) \setminus V_z$.

We are interested in determining linear differential operators with polynomial coefficients in the variables $z$ which annihilate the function $g_\Gamma(z)$ for any choice of $\Gamma$. 
We shall write $\partial_i$ for the partial derivative with respect to $z_i$.
Let $D_{\C^{n+1}}$ be the Weyl algebra $\mathbb{C}\langle z_i,\partial_i \, ; \,  i=0,\dots,n\rangle$.
We introduce a left ideal $H_L(u)\subset D_{\C^{n+1}}$ and a left $D_{\C^{n+1}}$-module $M_{L}(u)=D_{\C^{n+1}}/H_{L}(u)$ (Definition \ref{def:matroidhypergeometricsystem}).
The $D_{\C^{n+1}}$-module $M_{L}(u)$ is the {matroid hypergeometric system}. It is our analog of the GKZ $A$-hypergeometric system in toric geometry (see Theorems \ref{thm:mainintro2} and \ref{thm:GKZsystem}).

\begin{theorem} \label{thm:mainintro2}
The matroid hypergeometric system $M_L(u)$ has the following properties.
\begin{enumerate}
    \item $M_L(u)$ is a holonomic $D_{\C^{n+1}}$-module.
    \item The singular locus of $M_L(u)$ is contained in the zero locus of $\Erec$. 
\end{enumerate}
\end{theorem}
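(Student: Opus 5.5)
The plan is to realize $M_L(u)$ as the restriction of a GKZ system along a linear embedding, and then read off both holonomicity and the bound on the singular locus from the toric theory. The definition of $H_L(u)$ is not yet in view. I expect it (following \cite{matsubara2025hypergeometric}) to consist of the operators annihilating $g_\Gamma(z)$ in \eqref{eq:eulerint_intro}: a homogeneity (Euler) operator $\sum_i z_i\partial_i + s$; first-order operators coming from the torus symmetries of $L$, i.e.\ from the row space of $A$; and the "toric" relations $\prod_{i\in I}\partial_i$-type operators encoding the linear dependencies among the columns of $A$ after reciprocation.

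The key observation is that $g_\Gamma(z)$ is the pullback, under the linear map $z \mapsto A_z = A\cdot{\rm diag}(z)$, of an Euler integral $\int (\sum_{i} c_i\, x_i)^{-s}\, x^{u}\,\ldots$ with $x\in L^{-1}\cap T$. Equivalently, after clearing denominators, it is the pullback of a GKZ integral whose support matrix is built from $L^{-2}$ via the Cayley trick. Concretely, I would proceed in three steps.

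\textbf{(i)} Exhibit a $D$-module isomorphism $M_L(u)\simeq \iota^* N$, where $N$ is a holonomic $D$-module on ${\rm Mat}_{d+1,n+1}$ or on $\mathbb{C}^{n+1}$ (a Gauss--Manin/direct-image system $\int_\pi \mathcal{O}\cdot f^{-s}\ell^u$ for the family $V_z$), and $\iota$ is the specialization $U=A_z$. Part 1 then follows from the stability of holonomicity under direct images and inverse images. Alternatively, one can compute the characteristic variety of $H_L(u)$ directly, showing it lies in a finite union of conormal varieties indexed by flats $F$; this gives holonomicity by dimension count.

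\textbf{(ii)} For part 2, show that $N$ is smooth (a connection) over the locus where the family $V_z$ of \eqref{eq:family of hypersurfaces} is topologically locally trivial. The direct-image description makes this immediate: off the discriminant of the map $\pi$, all strata of the compactified pair $(\overline{\mathbb{P}^d\setminus\mathcal{A}},\, V_z\cup\mathcal{A})$ are transversal, so $\int_\pi$ is a local system.

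\textbf{(iii)} Identify this bifurcation locus with a subset of $V(\Erec)$. Using Theorem~\ref{thm:factorization}, the locus is covered by the discriminants $\Delta(L_F^{-1})$ over the flats $F$: a singular point of $V_z$ at infinity along the stratum of flat $F$ corresponds to a tangency of the hyperplane $z$ with the reciprocal space $L_F^{-1}$.

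The main obstacle is step (iii), specifically the boundary strata. Non-transversality of $V_z$ with an intersection $\bigcap_{i\notin F}\{\ell_i=0\}$ of hyperplanes must be translated, after blowing up (e.g.\ in the wonderful compactification), into $z|_F\in\nabla(L_F^{-1})$. I would attempt this by writing, near a point of that stratum, $\sum_i z_i/\ell_i$ with leading part $\sum_{i\in F} z_i/\ell_i$ in local coordinates. A critical point of the leading part is exactly a point where the hyperplane $z_F$ is tangent to $L_F^{-1}$. In the worst case, rather than proving local triviality directly, I would bound ${\rm Sing}(M_L(u))$ by a characteristic-variety estimate: show that the projection of ${\rm Ch}(M_L(u))\setminus T^*_{\mathbb{C}^{n+1}}\mathbb{C}^{n+1}$ lies in $\bigcup_F\big(\nabla(L_F^{-1})\times\mathbb{C}^{F^c}\big)$, which is $V(\Erec)$ by Theorem~\ref{thm:factorization}.
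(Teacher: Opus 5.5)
\textbf{The main route has a gap.} Steps (i)--(iii) all rest on an isomorphism between $M_L(u)=D_{\mathbb{C}^{n+1}}/H_L(u)$ and a Gauss--Manin type module $N=\int_\pi\mathcal{O}_X\,\varphi$. Nothing available gives you that. What one can actually verify (Proposition~\ref{prop:psi}) is only a homomorphism $\psi\colon M_L(u)\to M^{\rm hyp}_L(u)$, $[1]\mapsto[f^{d+1}\Omega(t)]$, obtained by checking that the generators annihilate the Euler integral. Whether $\psi$ is an isomorphism is left open in the paper (Conjecture~\ref{conj:Integral representation}). Even conjecturally this is expected only for generic $u$, whereas the theorem is asserted for every $u$. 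Holonomicity and bounds on the singular locus do not pass from the target of $\psi$ back to its source, because the kernel of $\psi$ is uncontrolled. Moreover, steps (ii)--(iii) go after the hard direction, the computation of ${\rm Sing}(M^{\rm hyp}_L(u))$. The paper obtains that only in Section~\ref{sec:6}, via a Fourier transform and Franecki--Kapranov, for generic $u$, and it is not needed for this inclusion.

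\textbf{What works.} The ``alternative'' you mention in (i), and again as the ``worst case'' in (iii), is a direct bound on the characteristic variety. That is exactly the paper's proof. It has to be run with the actual generators, however, and your guess of them is off in an essential way. Besides $H=\sum_j z_j\partial_j+s$ and $Q_h=h(\partial)$ for $h\in I(L^{-1})$, the row-space operators are \emph{second order}: $P_i=\sum_j a_{ij}z_j\partial_j^2-\sum_j a_{ij}u_j\partial_j$. They come from integrating by parts in $t_i$, since $\partial f/\partial t_i=-\sum_j a_{ij}z_j\ell_j^{-2}$.

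\textbf{How the bound goes.} The principal symbols force $x\in\widehat{L^{-1}}$ (the affine cone) and $A_z\cdot x^2=0$, where $x^2=(x_0^2,\dots,x_n^2)$.
\begin{itemize}
\item By the tangent-space computation $T_{\ell^{-1}(t)}L^{-1}=\mathbb{P}{\rm Row}(A\,{\rm diag}(\ell_j(t)^{-2}))$ (Lemma~\ref{lem:recAdisc_geom}) and the stratification of $L^{-1}$ by the tori $T_F$ over flats $F$ (Proposition~\ref{prop:stratify}), this set is $\bigsqcup_F{\rm Con}(L^{-1}\cap T_F)$, with $F=\emptyset$ giving the zero section.
\item Each piece has dimension ${\rm rank}(F)+(|F|-{\rm rank}(F))+(n+1-|F|)=n+1$. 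So $M_L(u)$ is holonomic for every $u$, with no transversality or compactification argument.
\item Off the zero section, the projection to $z$ is $\{z:\exists\,y\in L^{-2},\ A_z y=0\}={\rm pr}_2(Y)$. This set is closed and equals $V(E_L)$ by Lemma~\ref{lem:ELisprY}.
\end{itemize}

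\textbf{Why the squares matter.} With first-order symbols $A_z x=0$, you would land on the open stratum in $L\star L^\perp\subseteq\{\sum_i z_i=0\}$ (Proposition~\ref{prop:LAstarLB}). You would not land in $\nabla(L^{-1})=L^2\star L^\perp$, and $E_L$, which is built from $L^{-2}$, would never appear.
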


We expect that the inclusion in Theorem \ref{thm:mainintro2}(2) is in fact an equality (Conjecture \ref{conj:bigconjecture}(3)).

\begin{example}\label{ex:Lauricella}
    For $L$ from Example \ref{ex:intro}, the matroid hypergeometric ideal $H_L(u) \subset D_{\mathbb{C}^4}$ is generated by five operators, see \eqref{eq:operatorsbanana} ($n$=3). These operators annihilate the Euler integral
    \begin{equation} \label{eq:banana3} g_\Gamma(z) \, = \, \int_\Gamma \left( -\frac{z_0}{t_0+t_1+t_2} + \frac{z_1}{t_0} + \frac{z_2}{t_1} + \frac{z_3}{t_2}\right)^{-s} \, (-t_0-t_1-t_2)^{u_0}t_0^{u_1}t_1^{u_2}t_2^{u_3} \, \Omega(t_0,t_1,t_2) ,\end{equation}
    which uses $A$ from \eqref{eq:generic matroid}.
    For an appropriate choice of the twisted cycle $\Gamma$, we have the identity
    \[ g_\Gamma(z) \, = \, (-1)^{u_0}(-z_0)^{-s}F_C^{(3)}\left(
    \begin{matrix} s,1+s+u_0 \\ -u_1,-u_2,-u_3 \end{matrix} \, ; \, -\frac{z_1}{z_0},-\frac{z_2}{z_0},-\frac{z_3}{z_0}\right), \]
    where $F_C^{(n)}$ is Lauricella's hypergeometric series of type $F_C$ in $n$ variables \cite[Chapter~2]{Exton1976}:
    \[ F_C^{(n)}\left(\substack{a,b\\ c_1,\dots,c_n};y_1,\dots,{y_n}\right) \, = \, \sum_{m_1,\dots,m_n=0}^{\infty}
    \frac{(a)_{m_1+\dots+m_n}\,(b)_{m_1+\dots+m_n}}
         {(c_1)_{m_1}\,\dots\,(c_n)_{m_n}}
    \frac{y_1^{m_1}}{m_1!}\,\cdots
    \frac{y_n^{m_n}}{m_n!}.  \]
    Here $(a)_m:=a\cdot (a+1)\cdots(a+m-1)$ is the Pochhammer symbol.
    The annihilator ideal of $F_C^{(3)}$ in the Weyl algebra $D = \mathbb{C} \langle y_1, y_2, y_3, \frac{\partial}{\partial y_1}, \frac{\partial}{\partial y_2}, \frac{\partial}{\partial y_2} \rangle$ is holonomic.
The defining equation $E(y_1,y_2,y_3) = 0$ of the singular locus of the corresponding $D$-module is found explicitly in \cite[Theorem 11]{hattori2014singular}.
The hypersurface defined by the numerator of $z_0E(-\frac{z_1}{z_0},-\frac{z_2}{z_0},-\frac{z_3}{z_0})$ is the vanishing locus of $E_L$. This hypersurface is the singular locus of the matroid hypergeometric system $M_L(u)$, see Example \ref{ex:M2}.
The specialization $(a,b,c_1,c_2,c_3)=(s,1+s+u_0,-u_1,-u_2,-u_3)$ gives a reducible monodromy representation as in \cite[Theorem 13]{hattori2014singular}.
\end{example}

As it turns out, integrals like \eqref{eq:eulerint_intro} appear as Feynman integrals of banana diagrams in particle physics (Example \ref{ex:banana integral}). In the language of the physics literature, Theorem \ref{thm:mainintro2} states that the reciprocal matroid determinant contains the Landau singularities of such Feynman integrals. Our matroid hypergeometric system generalizes the differential operators in \cite[Theorem 1]{Flieger} for banana Feynman integrals.

The outline of our paper is as follows. In Section \ref{sec:2}, we recall the $A$-theory of Gelfand, Kapranov and Zelevinsky with the aim of making the analogies with the results of this paper as explicit as possible. Parts 1 and 2 in Theorem \ref{thm:mainintro} will be proved in Section \ref{sec:3}. After that, in Section \ref{sec:4}, we use tropical geometry to investigate the dimension, degree and Newton polytope of the matroid discriminant $\nabla(L^{-1})$. We show that when $\nabla(L^{-1})$ is a hypersurface, its Newton polytope is a simplex (Theorem \ref{thm:maintropical}) and characterize when $\nabla(L^{-1})$ is a hypersurface (Proposition \ref{prop:whenhypersurface}). These results imply Theorem \ref{thm:mainintro}(3). In Section \ref{sec:5} we introduce the matroid hypergeometric system $M_L(u)$ and we prove parts 1 and 2 of Theorem \ref{thm:mainintro2}. In Section \ref{sec:6}, we make the connection with Euler discriminants and prove part 4 of Theorem \ref{thm:mainintro}. In Section 7, we suggest a list of open problems and ideas for future research.

\section{Preliminaries on GKZ theory} \label{sec:2}

Let $X^\circ \subset \mathbb{P}^n$ be an irreducible, smooth, quasi-projective subvariety of dimension $d$.
For a point $z = (z_0 : \cdots : z_n) \in (\mathbb{P}^n)^\vee$ in the dual projective space, we write $H_z = \{ x \in \mathbb{P}^n \, : \, \sum_{i=0}^n z_i \, x_i = 0 \} \subset \mathbb{P}^n$. The \emph{conormal bundle} of $X^\circ$ is defined as follows:
\[ {\rm Con}(X^\circ) \, = \, \{ (x, z) \in \mathbb{P}^n \times (\mathbb{P}^n)^\vee \, : \, x \in X^\circ \text{ and } H_z \supseteq T_xX^\circ \}, \]
where $T_x X^\circ \simeq \mathbb{P}^d$ is the $d$-dimensional projective tangent space of $X^\circ$ at $x$. Explicitly, if $X$ is locally defined by homogeneous equations $f_1(x) = \cdots = f_\ell(x) = 0$ at $x^* \in X$, then
\[ T_{x^*} X^\circ \, = \, \{ x \in \mathbb{P}^n \, : \, J_{X^\circ}(x^*) \cdot x \, = \, 0 \}, \quad \text{where} \quad J_{X^\circ}(x) \, = \, \Big(  \frac{\partial f_i}{\partial x_j} \Big )_{i = 1,\ldots, \ell, \, j = 0, \ldots, n}.  \]
In particular, by Euler's relation, we have $x^* \in T_{x^*}X^\circ$. 
The \emph{conormal variety} of $X^\circ$ is the closure of ${\rm Con}(X^\circ)$ in $\mathbb{P}^n \times (\mathbb{P}^n)^\vee$. The image of ${\rm Con}(X^\circ)$ under the natural projection ${\rm pr}_1: \mathbb{P}^n \times (\mathbb{P}^n)^\vee \rightarrow \mathbb{P}^n$ is $X^\circ$. The other projection ${\rm pr}_2: \mathbb{P}^n \times (\mathbb{P}^n)^\vee \rightarrow (\mathbb{P}^n)^\vee$ yields a geometric definition of discriminants. This definition is standard, see \cite[page 15]{GKZbook}.
\begin{definition} \label{def:disc}
    Let $X \subset \mathbb{P}^n$ be an irreducible closed subvariety. The \emph{$X$-discriminant variety} or \emph{projective dual variety} of $X$ is 
    \[ \nabla(X) \, = \, \overline{{\rm pr}_2({\rm Con}(X_{\rm reg}))} \, \subset \, (\mathbb{P}^n)^\vee,\]
    where $X_{\rm reg} = X \setminus {\rm Sing(X)}$ consists of the regular points of $X$. If $\nabla(X) \subset (\mathbb{P}^n)^\vee$ is a hypersurface, then its defining equation is denoted by $\Delta(X) \in \mathbb{C}[z_0, \ldots, z_n]$. If $\nabla(X)$ has codimension greater than one, then we set $\Delta(X) = 1$. We call $\Delta(X)$ the \emph{$X$-discriminant}.
\end{definition}

Another standard construction we will need is the \emph{Chow form}. 
Let $\mathbb{G}(n-d-1,n)$ be the Grassmannian of $(n-d-1)$-dimensional linear spaces in $\mathbb{P}^n$. We represent a point $[\Lambda] \in \mathbb{G}(n-d-1,n)$ by a $(d+1) \times (n+1)$ matrix $U$ such that $\Lambda = \Lambda_U = \mathbb{P} \ker U \, = \,  \{ x \in \mathbb{P}^n \, : \, U \cdot x = 0 \}$. The maximal minors of $U$ are the \emph{primal Pl\"ucker coordinates} of $[\Lambda_U]$, see \cite{DalbecSturmfels1995}. 
\begin{definition} \label{def:chowform}
    The \emph{Chow variety} of an irreducible variety $X \subset \mathbb{P}^n$ of dimension $d$ is
    \[ {\cal C}(X) \, = \, \{[\Lambda] \in \mathbb{G}(n-d-1,n) \, : \, X \cap \Lambda \neq \emptyset \}.\]
    Its defining equation in primal Pl\"ucker coordinates on $\mathbb{G}(n-d-1,n)$ is the \emph{Chow form} of $X$. It is denoted by ${\rm Chow}(X)$.
\end{definition}
Definition \ref{def:chowform} is justified by the fact that the Chow variety of $X$ is an irreducible hypersurface in $\mathbb{G}(n-d-1,n)$ and, modulo the Pl\"ucker relations, any hypersurface in $\mathbb{G}(n-d-1,n)$ is defined by a single polynomial in the primal Pl\"ucker coordinates. For more on Chow forms, the reader can consult \cite[Chapter 3, Section 2]{GKZbook} or the survey article \cite{DalbecSturmfels1995}.

We recall the definition of the $X$-resultant. Let $p: {\rm Mat}_{d+1, n+1}(\mathbb{C}) \dashrightarrow {\mathbb{G}}(n-d-1,n)$ be the surjective rational map which sends $U \mapsto [\Lambda_U]$. 

\begin{definition} \label{def:res}
    The \emph{$X$-resultant variety} is the hypersurface in ${\rm Mat}_{d+1,n+1}(\mathbb{C})$ given by 
    \[ {\cal R}(X) \, = \, \overline{p^{-1}({\cal C}(X))}. \]
    Its defining equation ${\rm Res}(X) = p^*{\rm Chow}(X) \in \mathbb{C}[u_{ij}]$ is the \emph{$X$-resultant}. 
\end{definition}

In Definition \ref{def:res}, the ring $\mathbb{C}[u_{ij}]$ is the polynomial ring with variables $u_{ij}, \, i = 0, \ldots, d, \, j = 0, \ldots, n$ representing the entries of $U \in {\rm Mat}_{d+1,n+1}(\mathbb{C})$, see Example \ref{ex:introcontd}. 

A main focus of the book \cite{GKZbook} is to apply these general constructions to a projective toric variety $X_A \subset \mathbb{P}^d$ obtained from a configuration $A$ of $n+1$ exponent vectors in $\mathbb{Z}^{d+1}$. The rest of this section recalls standard facts from this $A$-theory of discriminants and resultants, so as to highlight analogies (and differences) with our new results proved in later sections. 

Let $A$ be an integer $(d+1) \times (n+1)$ matrix whose columns are the exponent vectors used in a monomial map $\Phi_A: (\mathbb{C}^\times)^{d+1} \rightarrow \mathbb{P}^n$. The closure of the image of $\Phi_A$ is $X_A$. We assume that the first row of $A$ is the all-ones vector of length $n+1$, which ensures that the prime vanishing ideal $I(X_A)$ of our toric variety is the lattice ideal $I_A$ of the matrix $A$. Moreover, we assume that $A$ has rank $d+1$, so that $X_A$ has dimension $d$. The \emph{$A$-discriminant variety} is $\nabla(X_A)$, and the \emph{$A$-discriminant} is $\Delta(X_A)$ (up to scaling). Similarly, the \emph{$A$-resultant variety} and the \emph{$A$-resultant} are defined as ${\cal R}(X_A)$ and ${\rm Res}(X_A)$ respectively.

An equivalent definition of ${\cal R}(X_A)$ is as follows: ${\cal R}(X_A)$ is the closure in ${\rm Mat}_{d+1,n+1}(\mathbb{C})$ of all coefficients $u_{ij}$ for which the following equations: 
\begin{equation} \label{eq:ueqs} \sum_{j = 0}^n u_{ij} \, t^{a_j} \, = \, 0, \quad i = 0, \ldots, d\end{equation}
have a solution in $(\mathbb{C}^\times)^{d+1}$, where $t^{a_j} = t_0^{a_{0j}}t_1^{a_{1j}} \cdots t_d^{a_{dj}}$. 
Thus, ${\cal R}(X)$ is a generalization of the classical resultants studied by Cayley \cite{cayley1848elimination}. 

To define the principal $A$-determinant, we consider an $(n+1)$-dimensional linear subspace of matrices in ${\rm Mat}_{d+1,n+1}(\mathbb{C})$. For $z \in \mathbb{C}^{n+1}$, we define a matrix $A_z\in {\rm Mat}_{d+1,n+1}(\mathbb{C})$ by  
\begin{equation}  \label{eq:Az} A_z \, = \, A \cdot {\rm diag}(z_0, \ldots, z_n) \, = \, \begin{pmatrix}
    z_0 \, a_{00} & z_1 \, a_{01} & \cdots & z_n \, a_{0n} \\ 
    z_0 \, a_{10} & z_1 \, a_{11} & \cdots & z_n \, a_{1n} \\ 
    \vdots & \vdots & \ddots & \vdots \\ 
    z_0 \, a_{d0} & z_1 \, a_{d1} & \cdots & z_n \, a_{dn}
 \end{pmatrix}.
 \end{equation}
 We define a map $\iota: \mathbb{C}^{n+1} \rightarrow {\rm Mat}_{d+1,n+1}(\mathbb{C})$ by $z \mapsto A_z$. The \emph{principal $A$-determinant} $E_A$ is a polynomial in the variables $z_0, \ldots, z_n$ obtained by subsituting $U = A_z$ in the $A$-resultant ${\rm Res}(X_A)$. That is, we have $E_A = \iota^* {\rm Res}(X_A)$.
The equations \eqref{eq:ueqs} for $U = A_z$ are 
\[ t_0 \frac{\partial f(t,z)}{\partial t_0} \, = \, t_1 \frac{\partial f(t,z)}{\partial t_1} \, = \, \cdots \, = \,  t_d \frac{\partial f(t,z)}{\partial t_d} \, = \, 0,  \]
where $f(t,z) = \sum_{j = 0}^n z_j \, t^{a_j}$. Hence, the principal $A$-determinant is the evaluation of the $A$-resultant at the critical equations of $f(t,z)$. 

The convex hull of the columns of $A$, viewed as points in $\mathbb{R}^{d+1}$, is a $d$-dimensional convex polytope $P(A)$. For each face $Q$ of $P(A)$, we denote by $A_Q$ the submatrix of $A$ consisting of columns representing points on $Q$. The $A_Q$-discriminant $\Delta(A_Q)$ corresponding to that submatrix is expressed in the variables $z_i$ for which the $i$-th column of $A$ lies on $Q$.
With this notation, the following statement about $E_A$ mirrors Theorem \ref{thm:mainintro} about $\Erec$.

\begin{theorem} \label{thm:mainEA}
    The principal $A$-determinant $E_A$ has the following properties. 
    \begin{enumerate}
        \item $E_A$ has degree $(d+1) {\rm Vol}(P_A)$, where ${\rm Vol}(P_A) = \deg(X_A)$ is the lattice volume of $P_A$. 
        \item The radical of $E_A$ is the product $\prod_{Q \preceq P_A} \Delta(X_{A_Q})$, where $Q$ ranges over all faces of $P_A$. 
        \item The Newton polytope of $E_A$ is the secondary polytope of $A$. 
        \item The zero locus of $E_A$ consists of all points $z \in (\mathbb{P}^n)^\vee$ such that the topological Euler characteristic of the hyperplane section $\{ x \in X_A \, : \, z_0 x_0 + \cdots + z_n x_n = 0 \}$ differs from its generic value. 
    \end{enumerate}
\end{theorem}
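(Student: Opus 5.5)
Theorem \ref{thm:mainEA} collects classical results of Gel'fand, Kapranov and Zelevinsky, so the plan is to assemble the proof from the standard theory in \cite{GKZbook}, indicating for each part the mechanism we will later imitate in the matroid setting.

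For part 1, we use that ${\rm Res}(X_A)$ is multihomogeneous of degree $\deg(X_A)$ in each of the $d+1$ rows of $U$. This holds because the Chow form has degree $\deg X$ in each block of variables. Substituting $U = A_z$ makes every row linear in $z$, so the degree is at most $(d+1)\deg(X_A)$. To get equality we must check that $\iota^*{\rm Res}(X_A)$ is not identically zero and that the top-degree part does not cancel. Since $E_A$ is homogeneous of this degree as soon as it is nonzero, nonvanishing suffices. We show it by exhibiting one $z$ for which the critical equations $t_i \partial_{t_i} f = 0$ have no common torus solution. A generic $z$ works, because a generic Laurent polynomial with support $A$ has only nondegenerate critical points off the singular strata. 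Finally, $\deg X_A = {\rm Vol}(P_A)$ by Kushnirenko's theorem.

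For part 2, we identify the zero set of $E_A$. The point $A_z$ lies on ${\cal R}(X_A)$ if and only if the system $t_i\partial_{t_i} f = 0$ has a solution in the closure $X_A$. The key step is to stratify $X_A$ into torus orbits $O_Q$, one for each face $Q$ of $P_A$. On $O_Q$ the equations restrict to the critical equations of $f_Q = \sum_{j\in Q} z_j t^{a_j}$, which give exactly the cone over $\nabla(X_{A_Q})$. This yields the set-theoretic equality $V(E_A) = \bigcup_Q \nabla(X_{A_Q})$, where the components of codimension greater than one do not appear. The radical statement follows, with the convention $\Delta = 1$ for defective faces. We expect the main obstacle to be the careful boundary analysis: showing that limits of solutions at infinity land in the orbit $O_Q$ of the face selected by the valuation. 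We would handle this with a one-parameter subgroup argument.

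Part 3 is the deepest. We would follow GKZ Chapter 10. We compute the initial forms of $E_A$ with respect to weight vectors $w$. Using the factorization of the $A$-resultant under toric degeneration, we show that ${\rm in}_w E_A$ is a monomial $\prod_j z_j^{\varphi_T(j)}$ whenever $w$ induces a regular triangulation $T$. Here $\varphi_T(j)$ is the sum of the normalized volumes of the simplices of $T$ that contain vertex $j$. This identifies the vertices of ${\rm Newt}(E_A)$ with the GKZ vectors $\varphi_T$, so the Newton polytope is the secondary polytope. The hard point is the multiplicativity of leading coefficients under degeneration, which requires the product formula for the resultant.

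For part 4, we follow \cite{esterov2013discriminant}. By Kushnirenko–Bernstein, the Euler characteristic of a nondegenerate hyperplane section is $(-1)^{d-1}$ times a volume, computed stratum by stratum. We then show that degeneracy on any face $Q$, meaning $z \in \nabla(X_{A_Q})$, strictly changes the Euler characteristic. For this we use a Milnor number or vanishing-cycle count, together with an additivity argument for the Euler characteristic under the torus-orbit stratification.
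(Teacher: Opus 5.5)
For parts 1--3 your plan follows the paper, which simply cites \cite[Chapter 8, Corollary 2.2]{GKZbook} and \cite[Chapter 10, Theorems 1.2 and 1.4]{GKZbook}. Your sketches describe the mechanisms behind those results. Two small corrections are needed.

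In part 1, it is not enough that $t_i\partial_{t_i}f=0$ has no torus solution. A solution on a boundary orbit already puts $A_z$ on ${\cal R}(X_A)$. What you need is $z\notin\bigcup_Q\nabla(X_{A_Q})$, i.e.\ that $f$ and all its truncations $f_Q$ define smooth hypersurfaces in their tori. (Since $t_0\partial_{t_0}f=f$, the equations describe singular points of $\{f=0\}$, not nondegenerate critical points of $f$.) So part 1 should be proved after the orbit decomposition of part 2.

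Conversely, the limit analysis you anticipate in part 2 is unnecessary. The incidence variety $\{(x,z)\,:\,x\in X_A,\ A_zx=0\}$ is closed, so its projection is closed. The decomposition $X_A=\bigsqcup_Q O_Q$ splits it into pieces whose images are dense in the $\nabla(X_{A_Q})$, exactly as in the proof of Theorem \ref{thm:factorization}.

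The genuine gap is part 4. Read literally, with hyperplane sections of all of $X_A$, the statement is false, and your additivity over torus orbits is exactly where it breaks. Take $A=\begin{pmatrix}1&1&1\\0&1&2\end{pmatrix}$, so that $X_A$ is the conic $x_1^2=x_0x_2$ and $E_A=z_0z_2(z_1^2-4z_0z_2)$ up to a constant. For $z=(0:1:1)$, the line $x_1+x_2=0$ meets $X_A$ in the two points $(1:0:0)$ and $(1:-1:1)$. So $\chi=2$ is the generic value even though $E_A(z)=0$: the point lost from the open orbit reappears in a boundary orbit.

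The result the paper cites, \cite[Theorem 1.8]{esterov2013discriminant}, concerns $X_A\cap T\cap H_z$, i.e.\ the hypersurface $\{f=0\}$ in the torus, exactly as in the matroid version Theorem \ref{thm:mainintro}(4). That is what you must prove, with generic value $(-1)^{d-1}{\rm Vol}(P_A)$ by Kushnirenko.

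Even for that version, the decisive step is missing. An interior Milnor-number count only sees $z\in\nabla(X_A)$. For $z$ generic on $\nabla(X_{A_Q})$ with $Q$ a proper face, $\{f=0\}\cap T$ is smooth, and its Euler characteristic changes only because $H_z$ becomes tangent to the boundary orbit $O_Q$. Additivity only moves the problem to the jumps of $\chi(\overline{O_{Q'}}\cap H_z)$ for the orbit closures through $O_Q$. These depend on the singularities of $\overline{O_{Q'}}$ along $O_Q$ and can cancel, as the conic shows. Proving that the net jump on the open orbit is never zero is the real content of Esterov's theorem, and your proposal gives no mechanism for it.

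One way to supply it, in the spirit of \cite{amendola2019maximum}, goes through critical-point counts:
\begin{enumerate}
\item By \cite{huh2013maximum}, $(-1)^d\chi((X_A\cap T)\setminus H_z)$ is the number of critical points of a generic master function $t^sf^{\nu}$.
\item By Bernstein's theorem, this number falls below the mixed volume exactly when some facial system of the critical equations has a torus solution, or a solution lies on $\{f=0\}$.
\item For generic exponents, that happens precisely when some $f_Q$ is singular in its torus, i.e.\ when $E_A(z)=0$.
\end{enumerate}
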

\begin{proof}
    For point 1, since $E_A$ is obtained as a linear substitution of the $A$-resultant, it has the same degree as ${\rm Res}(X_A)$. The degree of ${\rm Res}(X_A)$ is $(d+1) \deg(X_A)$, see \cite[Chapter 8, Corollary 2.2]{GKZbook} and the preceding discussion. Points 2 and 3 follow from \cite[Chapter 10, Theorems 1.2 and 1.4]{GKZbook}. Point 4 was shown in \cite[Theorem 1.8]{esterov2013discriminant}.
\end{proof}

We recall a parametric description of the $A$-discriminant $\nabla(X_A)$, known as the \emph{Horn-Kapranov uniformization} \cite{Kapranov1991LogGauss}. 
Let $B \in \mathbb{Z}^{(n+1) \times (n-d)}$ be a Gale dual matrix of $A$, i.e., $A \cdot B = 0$ and $B$ has rank $n-d$.
The rows of $B$ are denoted by $b_0, \ldots, b_n \in \mathbb{Z}^{n-d}$.

\begin{proposition} \label{prop:HornKapranov}
    The $A$-discriminant variety $\nabla(X_A)$ is the closure of the image of the map 
    \[ h_A \, : \, (\mathbb{C}^\times)^{d+1} \times \mathbb{P}^{n-d-1} \, \longrightarrow \, (\mathbb{P}^n)^\vee, \quad (t, u) \, \longmapsto \, (t^{-a_0} \, (b_0 \cdot u): \cdots : t^{-a_n} \, (b_n \cdot u) ). \]
\end{proposition}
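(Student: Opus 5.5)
We prove the Horn--Kapranov uniformization (Proposition \ref{prop:HornKapranov}) by writing down the conormal variety of the dense torus orbit $X_A^\circ = \Phi_A((\mathbb{C}^\times)^{d+1})$ and projecting it to $(\mathbb{P}^n)^\vee$. Since $X_A^\circ$ is a dense open subset of $X_A$ consisting of smooth points, ${\rm Con}(X_A^\circ)$ is dense in the conormal variety of $X_{A,{\rm reg}}$. Hence $\nabla(X_A)$ is the closure of ${\rm pr}_2({\rm Con}(X_A^\circ))$, and it suffices to show that this image coincides with the image of $h_A$.

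The key step is to describe the tangent space at $x = \Phi_A(t)$. The affine cone over $X_A^\circ$ near $x$ is parametrized by $t \mapsto (t^{a_0}, \ldots, t^{a_n})$. Because the first row of $A$ is all ones, the scaling direction is already included, so the projective tangent space $T_x X_A^\circ$ is spanned by the vectors
\[ t_i\,\partial_{t_i}\big(t^{a_0},\ldots,t^{a_n}\big) \, = \, \big(a_{i0}\,t^{a_0},\ldots,a_{in}\,t^{a_n}\big), \qquad i=0,\ldots,d. \]
A hyperplane $H_z$ contains $T_xX_A^\circ$ if and only if $\sum_j z_j a_{ij} t^{a_j} = 0$ for all $i$. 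Equivalently, the vector $w = (z_0 t^{a_0}, \ldots, z_n t^{a_n})$ lies in $\ker A$. This kernel is the column span of the Gale dual $B$, so $w = B u$ for some $u \in \mathbb{C}^{n-d}$. That is, $z_j = t^{-a_j}(b_j\cdot u)$ for all $j$. Since $z \neq 0$ forces $u \neq 0$, we may regard $u$ as a point of $\mathbb{P}^{n-d-1}$, up to the common scaling of $z$.

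Conversely, every $(t,u)$ with $h_A(t,u)$ defined, i.e.\ not all $b_j \cdot u$ zero, yields a pair $(\Phi_A(t), h_A(t,u)) \in {\rm Con}(X_A^\circ)$. Since $B$ has full rank $n-d$, $Bu \neq 0$ whenever $u \neq 0$, so $h_A$ is in fact a morphism. Thus ${\rm pr}_2({\rm Con}(X_A^\circ))$ equals ${\rm im}(h_A)$ exactly, and taking closures gives the claim.

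The only delicate point is the density/regularity argument in the first step. We need that replacing $X_{\rm reg}$ by the open torus orbit does not change the closure of the projection. This follows because ${\rm Con}(X_{\rm reg})$ is irreducible: it is a projective bundle over the irreducible variety $X_{\rm reg}$. Hence ${\rm Con}(X_A^\circ)$, its restriction over a dense open subset, is dense in it, and the images under ${\rm pr}_2$ have the same closure.
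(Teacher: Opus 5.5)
Your proposal is correct and follows the same route as the paper, whose one-line proof simply observes that $h_A(\{t\}\times\mathbb{P}^{n-d-1})$ is the set of hyperplanes tangent to $X_A$ at the smooth point $\Phi_A(t)$. Your computation of the tangent space as the row span of $A\cdot{\rm diag}(t^{a_0},\ldots,t^{a_n})$, together with the density argument for replacing $X_{\rm reg}$ by the torus orbit, supplies the details that the paper leaves implicit.
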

\begin{proof}
    The image of $\{t\} \times \mathbb{P}^{n-d-1}$ under the map $h_A$ consist of all hyperplanes $H_z$ which are tangent to $X_A$ at the smooth point $\Phi_A(t) \in X_A$. 
\end{proof}
Using the notation $a \star b = (a_0b_0: \cdots : a_nb_n)$ for $a, b \in T \subset (\mathbb{P}^n)^\vee$, we can conveniently express the Horn-Kapranov uniformization as $h_A(t,u) = \Phi_A(t^{-1}) \star(B\, u)$.

Finally, we recall the definition and some properties of the \emph{$A$-hypergeometric system} of Gelfand--Kapranov--Zelevinsky, also known as the \emph{GKZ system}. This is a holonomic system of partial differential equations whose singular locus, in the sense of $D$-module theory, is the vanishing locus of the principal $A$-determinant.  
In what follows, we only include the most relevant results in view of the main theorems of Section \ref{sec:5} of this paper. 
More details, references and a friendly introduction can be found in the recent survey  \cite{ReicheltSchulzeSevenheckWalther2021}.

Let $c \in \mathbb{C}^{d+1}$ be a vector of $d+1$ complex parameters. The \emph{GKZ ideal} $H_A(c)$ is a left ideal of the Weyl algebra $D_{\C^{n+1}}=\C\langle z_0,\dots,z_n,\partial_0,\dots,\partial_n\rangle$ in the variables $z_i$ and the partial derivatives $\partial_i= \frac{\partial}{\partial z_i}$ generated by the following operators:
\begin{align}
    E_i &\, = \, \sum_{j=0}^n a_{ij} \, z_j \, \partial_j+c_i \quad i = 0, \ldots, d, \label{eq:Euler op}\\ 
    P_h & \, = \,  h(\partial_0, \ldots, \partial_n), \quad h(x_0, \ldots, x_n) \in I_A.\label{eq:toric op} 
\end{align}
The GKZ system is the left $D_{\C^{n+1}}$-module $M_A(c):=D_{\C^{n+1}}/H_A(c)$.
A $D_{\C^{n+1}}$-module is said to be \emph{holonomic} if its  \emph{characteristic variety} is purely $(n+1)$-dimensional. The characteristic variety of $M_A(c)$ lives in the cotangent bundle $T^*\mathbb{C}^{n+1}={\rm Spec}\,\C[x_0,\dots,x_n,z_0,\dots,z_n]$ and is denoted by ${\rm Char}(M_A(c))$. 
It is the vanishing locus of the \emph{characteristic ideal} $I_{\rm char}(M_A(c))\subseteq \mathbb{C}[x_0,\ldots, x_n,z_0, \ldots, z_n]$ obtained as an initial ideal of $H_A(c)$, see \cite[Definition 1.9]{SattelbergerSturmfels2025}. 
The \emph{singular locus} ${\rm Sing}(M_A(c))$ of $M_A(c)$ is a subvariety of $\C^{n+1}$ defined by 
\[ \big ( I_{\rm char}(M_A(c)) : \langle x_0, \ldots, x_n\rangle^\infty \big ) \, \cap \, \mathbb{C}[z_0, \ldots, z_n].\]
This variety contains all singularities, i.e., branch points and poles, of holomorphic functions satisfiying our PDEs.  The singular locus connects the GKZ system with the principal $A$-determinant. 
The following theorem collects \cite[Theorem 3.9]{adolphson1994hypergeometric} and \cite[Corollary~4.17]{Schulze_Walther}.

\begin{theorem}\label{thm:GKZsystem}
The GKZ system $M_A(c)$ has the following properties.
\begin{enumerate}
    \item $M_A(c)$ is a holonomic $D_{\C^{n+1}}$-module.
    \item For generic parameters $c \in \mathbb{C}^{n+1}$, the singular locus of $M_A(c)$ is the zero locus of $E_A$. 
\end{enumerate}
\end{theorem}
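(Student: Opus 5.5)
The plan is to prove the two parts separately, working throughout with the principal symbols of the generators of $H_A(c)$. Since the statement is a collection of known results (Adolphson; Schulze--Walther), the aim is to reconstruct the main line of their arguments rather than to find a new route.

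\emph{Holonomicity.} The principal symbol of $P_h$ is $h(x_0,\ldots,x_n)$ for homogeneous binomials $h\in I_A$. The principal symbol of $E_i$ is $\sum_j a_{ij} z_j x_j$. Hence ${\rm Char}(M_A(c))$ is contained in
\[ V \, = \, \{(x,z) \, : \, x \in \widehat{X}_A, \ A\cdot (x\star z)=0\}, \]
where $\widehat{X}_A$ is the affine cone over $X_A$ and $x\star z$ is the coordinatewise product. I would stratify $\widehat{X}_A$ into torus orbits, which correspond to the faces $Q$ of $P_A$. An orbit has dimension $\dim Q+1$. For a point $x$ on the orbit of $Q$, the conditions $\sum_j a_{ij}x_jz_j=0$ involve only the coordinates $z_j$ with $j\in Q$. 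They impose $\operatorname{rank}(A_Q)=\dim Q+1$ independent linear conditions on those coordinates, and leave the remaining $z_j$ free. So each stratum of $V$ has dimension $(\dim Q+1)+(n+1)-(\dim Q+1)=n+1$. Since $V$ is a finite union of such pieces, $\dim {\rm Char}\le n+1$. The reverse inequality is automatic by Bernstein's inequality, so $M_A(c)$ is holonomic. The fact that the first row of $A$ is all ones is used here: it makes $I_A$ homogeneous, which keeps the symbol computation clean.

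\emph{Singular locus, inclusion $\subseteq$.} Each piece of $V$ over the orbit of $Q$ is exactly the closure of the conormal bundle of that orbit, in the sense of Section~\ref{sec:2}. One checks this by comparing $A_Q\cdot(x\star z)=0$ with the tangency condition, using the parametrization of Proposition~\ref{prop:HornKapranov}. Next remove the zero section by saturating by $\langle x\rangle$ and project to the $z$-coordinates. Away from the zero section, the projection of the piece for $Q$ has dense image exactly when $z$ is generic in the coordinates outside $Q$ and $(z_j)_{j\in Q}$ lies in $\nabla(X_{A_Q})$. Hence ${\rm Sing}(M_A(c))\subseteq\bigcup_Q \nabla(X_{A_Q})$, and this union is $V(E_A)$ by Theorem~\ref{thm:mainEA}(2). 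This inclusion holds for all parameters $c$.

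\emph{Singular locus, inclusion $\supseteq$ (generic $c$).} This is the main obstacle: we must show that every component $\nabla(X_{A_Q})$ of codimension one actually carries singularities. My first approach is the following. For generic (nonresonant) $c$, the holonomic rank equals ${\rm Vol}(P_A)$, and the solutions are realized by Euler integrals $\int t^{c} f(t,z)^{\alpha}\,dt/t$ over twisted cycles. Suppose a hyperplane section $z\in\nabla(X_{A_Q})$ were a nonsingular point of the system. Then the local solution space would be a local system there, and the rank would be preserved. But the Euler characteristic of the corresponding very affine variety drops at such $z$, by Theorem~\ref{thm:mainEA}(4), and this twisted-cohomology dimension computes the rank for nonresonant $c$. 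That is a contradiction. Making the Euler-characteristic/rank comparison rigorous uniformly over $z$ is the technical heart. If it fails, I would instead follow Schulze--Walther: pass to the $L$-characteristic variety via a $(1,1)$-weight and a Fourier--Laplace transform, where genericity of $c$ ensures the relevant GKZ-module is isomorphic to a direct image from the torus whose singular support can be computed directly.
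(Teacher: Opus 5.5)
The paper gives no argument of its own here: it cites \cite[Theorem 3.9]{adolphson1994hypergeometric} for holonomicity and \cite[Corollary 4.17]{Schulze_Walther} for the singular locus. Your dimension count for ${\rm Char}(M_A(c))$ and your inclusion ${\rm Sing}(M_A(c))\subseteq V(E_A)$ are correct. They are the classical GKZ argument for homogeneous $A$, and the same template the paper runs for its matroid analog in Theorem~\ref{thm:holonomicity}: bound the symbol variety by conormal bundles of strata, then apply Lemma~\ref{lem:ELisprY}.

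One small repair is needed. Theorem~\ref{thm:mainEA}(2) only records faces $Q$ whose $\nabla(X_{A_Q})$ is a hypersurface, so it does not by itself absorb the projections coming from dual defective faces. Use instead the resultant description $V(E_A)=\{z : \exists\, x\in X_A,\ A\cdot(x\star z)=0\}$, the toric counterpart of Lemma~\ref{lem:ELisprY}. This set is exactly the projection of $V\setminus\{x=0\}$, so the factorization is not needed at all.

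For the reverse inclusion you take a genuinely different route from the cited one, parallel to Section~\ref{sec:6}. However, the sentence ``this twisted-cohomology dimension computes the rank'' is precisely the step that needs proof. Knowing that Euler integrals span the solutions at generic $z$ does not supply it: nothing in that statement prevents a rank-${\rm Vol}(P_A)$ local system of solutions from continuing across $z_0$ while twisted cycles degenerate there.

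What closes the argument is a module-level identification plus base change. Let $\mathcal{X}=\{(t,z)\in(\mathbb{C}^\times)^d\times Z : f(1,t,z)\neq 0\}$, let $\pi\colon\mathcal{X}\to Z$ be the projection, and let $\varphi$ be the integrand of \eqref{eq:Euler integral}. For nonresonant $c$ (in particular $c_0\notin\mathbb{Z}$), one has $M_A(c)\simeq\int_\pi\mathcal{O}_{\mathcal{X}}\varphi$ with the right side concentrated in one degree. This follows from \cite[Corollary 3.8]{schulze2009hypergeometric} followed by integrating out $t_0$ as in the proof of Theorem~\ref{thm:integral rep2}.

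Since $\pi$ is smooth, base change along $i_z\colon\{z\}\hookrightarrow Z$ gives $i_z^\dagger\int_\pi\mathcal{O}_{\mathcal{X}}\varphi\simeq\int_{\pi_z}\mathcal{O}_{\mathcal{X}_z}\varphi$. This is the twisted de Rham cohomology of the fibre $\mathcal{X}_z$, and its Euler characteristic is $\chi(\mathcal{X}_z)$ because the connection is regular of rank one. If $z\notin{\rm Sing}(M_A(c))$, the module is a rank-$r$ vector bundle near $z$. So $i_z^\dagger$ is $\mathbb{C}^r$ in a single degree, and $\chi(\mathcal{X}_z)$ equals its generic value. Esterov's theorem then gives the contradiction. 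Note that this argument does not even need the rank formula ${\rm Vol}(P_A)$.

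You rightly use the very affine version of Esterov's theorem. Theorem~\ref{thm:mainEA}(4), read literally for the projective section of $X_A$, already fails for $A=\begin{pmatrix}1&1\\0&1\end{pmatrix}$: there $E_A=z_0z_1$, but every hyperplane section of $X_A=\mathbb{P}^1$ is a single point.

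As for the trade-off: your route reduces the hard inclusion to Esterov's topological theorem and is uniform with the paper's matroid treatment. The cited route instead computes the characteristic variety itself and needs neither Esterov's theorem nor a Gauss--Manin comparison.
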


We now introduce integral representations of the~solutions to a GKZ system.
Let $f(t, z) =\sum_{j=0}^n z_j \, t^{a_j}$ be as above.
Given $c\in\C^{d+1}$, consider the following \emph{Euler integral}~\cite{GKZ,Matsubara-Heo2023Four}: 
\begin{equation}\label{eq:Euler integral}
    \int_{\Gamma}f(t_0 = 1, t_1, \ldots, t_d,z)^{-c_0} \, t_1^{c_1}\cdots t_d^{c_d} \, \frac{\rm d t_1}{t_1} \wedge \cdots \wedge \frac{\rm d t_d}{t_d}
\end{equation}
Here, $\Gamma$ is an appropriate homology cycle with coefficients in a local system, commonly referred to as a \emph{twisted cycle}, see \cite[Section 3]{Matsubara-Heo2023Four}. 
It is readily seen that the Euler integral \eqref{eq:Euler integral} is annihilated by 
\eqref{eq:Euler op} and \eqref{eq:toric op}.
An elementary proof of this fact is found in \cite[Proposition 4.9]{Matsubara-Heo2023Four}. 
In particular, the singularities of the integral \eqref{eq:Euler integral} are contained in the vanishing locus of $E_A$.
The generalized Euler integrals exhaust the solution space outside of the singular locus when the parameter $c$ is non-resonant, a genericity condition described combinatorially in \cite[Theorem 2.10]{GKZ}.
A $D$-module counterpart of this result is \cite[Corollary 3.8 (3)]{schulze2009hypergeometric}.

\section{Discriminants, principal determinants and matroids} \label{sec:3}

In this section, we shall apply the constructions from Definitions \ref{def:disc}-\ref{def:res} to the varieties $L^{-1}$ and $L^{-2}$ from the Introduction. We make the following assumption throughout the text: 
\begin{equation} \label{eq:assum_noloops} \text{\emph{$L \subset \mathbb{P}^n$ has dimension $d< n$ and is not contained in a coordinate hyperplane},}\end{equation} 
so that $L^{-1} = \overline{\phi_{-1}(L \cap T)}$ is not the empty set. Equivalently, the matroid $M(L)$ has no loops. We introduce the following terminology for $\nabla(L^{-1})$ and $\Delta(L^{-1})$. 

\begin{definition} \label{def:matroiddiscriminant}
    Let $L^{-1}$ be the reciprocal linear space associated with a linear space $L \subset \mathbb{P}^n$. The \emph{matroid discriminant variety} of $L$ is $\nabla(L^{-1})$, and its \emph{matroid discriminant} is $\Delta(L^{-1})$. 
\end{definition}

Fix a matrix $A = (a_{ij}) \in {\rm Mat}_{d+1,n+1}(\mathbb{C})$ whose projectivized row span is the $d$-dimensional linear space $L \subset \mathbb{P}^n$. In particular, $A$ has rank $d+1$. Let $A_z$ be as in \eqref{eq:Az} and recall that $\iota$ is the map $\iota: \mathbb{C}^{n+1} \rightarrow {\rm Mat}_{d+1,n+1}(\mathbb{C}), \, z \mapsto A_z$. 

\begin{definition} \label{def:principalmatroiddeterminant}
The \emph{principal matroid determinant} of $L$ is $\Erec = \iota^*{\rm Res}(L^{-2}) \in \mathbb{C}[z_0, \ldots, z_n]$. 
\end{definition}

Concretely, $\Erec$ is obtained from ${\rm Res}(L^{-2})$ by substituting the entries of $U$ by the entries of $A_z$. Note that $E_L$ is defined up to a nonzero constant factor; a different choice of the matrix $A$ results in a scaling of $E_L$. 
We ignore this dependence of $E_L$ on $A$.

\begin{remark} 
    One sees from Definition \ref{def:principalmatroiddeterminant} that the Chow forms and resultants which are important in our principal matroid construction are those of $L^{-2}$, rather than $L^{-1}$. It would be sensible to emphasize their importance by calling ${\rm Res}(L^{-2}) = p^*{\rm Chow}(L^{-2})$ the \emph{matroid resultant} of $L$, in analogy with Definition \ref{def:matroiddiscriminant}.
    There is a risk for confusion with ${\rm Res}(L^{-1}) = p^*{\rm Chow}(L^{-1})$. The Chow form ${\rm Chow}(L^{-1})$ was studied~in~\cite{KummerVinzant2019}. 
\end{remark}

The columns of the matrix $A$ represent linear forms $\ell_0, \ldots, \ell_n \in \mathbb{C}[t_0, \ldots, t_d]_1$: $\ell_i(t) = a_{0i} \, t_0 + \cdots + a_{di} \, t_d$. These form an arrangement of hyperplanes, denoted by ${\cal A} = V(\ell_0 \cdots \ell_n) \subset \mathbb{P}^d$. 
The resultant variety ${\cal R}(L^{-2})$ is the closure of all $U \in {\rm Mat}_{d+1,n+1}(\mathbb{C})$ for which
\[ \sum_{j = 0}^n \frac{u_{ij}}{\ell_j(t)^2} \, = \, 0, \quad i = 0, \ldots, d \]
has a solution in $\mathbb{P}^d \setminus {\cal A}$. This is the analog of \eqref{eq:ueqs}. Specializing by setting $U = A_z$, we obtain 
\[ \frac{\partial f(t,z)}{\partial t_0} \, = \, \frac{\partial f(t,z)}{\partial t_1} \, = \, \cdots \, =\, \frac{\partial f(t,z)}{\partial t_d} \, = \, 0\]
where $f(t,z)$ is the rational function from the Introduction: 
\[ f(t,z) \, = \, \frac{z_0}{\ell_0(t)} + \frac{z_1}{\ell_1(t)} + \cdots + \frac{z_n}{\ell_n(t)}.\]
Hence, much like the principal $A$-determinant, the principal matroid determinant is the evaluation of the resultant of $L^{-2}$ at the critical equations of the rational function $f(t,z)$.

To compute the degree of $E_L$, we need the following lemma, which involves the \emph{M\"obius invariant} $\mu(L)$ of the matroid $M(L)$ represented by $L$. 
Recall that $\mu(L)$ is the absolute value of the constant term of the characteristic polynomial of $M(L)$. 

\begin{lemma} \label{lem:degLk}
    Let $c(L)$ be the number of connected components of $M(L)$. 
    We have
    \[ \deg L^k \, = \, k^{d-c(L)+1} \quad \text{if \, \, $k >0$} \quad \text{ and} \quad \deg L^k \, = \, (-k)^{d-c(L)+1}\mu(L) \quad \text{if \,\,$k< 0$}.\]
\end{lemma}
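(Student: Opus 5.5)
We first reduce to the case where $M(L)$ is connected, and then compute the degree of $L^k$ as the degree of the image of a finite morphism.

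\emph{Reduction to connected matroids.} Let $E_1 \sqcup \cdots \sqcup E_{c}$, with $c = c(L)$, be the connected components of $M(L)$. The affine cone $\widehat L \subset \mathbb{C}^{n+1}$ then splits as a direct sum $\widehat L = \widehat L_{E_1} \oplus \cdots \oplus \widehat L_{E_c}$ of linear spaces in the coordinate blocks $\mathbb{C}^{E_j}$. Write $d_j = \dim L_{E_j}$, so that $d + 1 = \sum_j (d_j+1)$ and hence $d - c + 1 = \sum_j d_j$. Since $\phi_k$ acts coordinatewise, the affine cone of $L^k$ is the product $\prod_j \widehat{L^k_{E_j}}$; this also holds for $k<0$, because taking closures commutes with products. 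The degree of a projective variety equals the multiplicity at the origin of its affine cone, and multiplicity is multiplicative on products. Therefore $\deg L^k = \prod_j \deg L_{E_j}^k$. The Möbius invariant is multiplicative over direct sums of matroids as well, since the characteristic polynomial is. So it suffices to prove $\deg L^k = k^d$ for $k>0$ and $\deg L^k = |k|^d \mu(L)$ for $k<0$ when $M(L)$ is connected. A component consisting of a single coloop has $d_j = 0$ and contributes the factor $1$, which is consistent.

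\emph{Connected case, $k>0$.} The map $\phi_k$ is a finite morphism on all of $\mathbb{P}^n$, and it pulls back the hyperplane class $H$ to $kH$. Restricting to the $d$-dimensional variety $L$ and applying the projection formula gives
\[ \deg(\phi_k|_L) \cdot \deg L^k \, = \, (kH)^d \cdot L \, = \, k^d. \]
It remains to show that $\phi_k|_L$ is birational onto its image. The fiber through a general $x \in L \cap T$ consists of the points $\zeta \star x$ lying in $L$, where $\zeta$ ranges over the finite group of $k$-torsion points of $T$. Since this group is finite and $L$ is irreducible, if a general $x$ had a nontrivial such preimage, then a single $\zeta \neq 1$ would satisfy $\zeta \star x \in L$ on a dense set, so $\zeta \star L = L$. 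The key step is the claim that the stabilizer of $L$ in $T$ is the subtorus of points which are constant on each connected component; for connected $M(L)$ this stabilizer is trivial. To prove the claim, suppose $\zeta \star L = L$. Then $\operatorname{diag}(\zeta)$ preserves $\widehat L$ up to a scalar, and after rescaling we may assume it preserves $\widehat L$ exactly. Then $\widehat L$ decomposes into eigenspaces of $\operatorname{diag}(\zeta)$, and each eigenspace is a coordinate block. This expresses $M(L)$ as a direct sum indexed by the distinct values of $\zeta$, so connectedness forces $\zeta$ to be constant.

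\emph{Connected case, $k<0$.} We write $L^k = \phi_{|k|}(L^{-1})$. This holds because $\phi_{|k|}$ is a closed map and $\phi_k = \phi_{|k|}\circ\phi_{-1}$ on $T$. Applying the same projection-formula argument to the $d$-dimensional variety $L^{-1}$ gives
\[ \deg(\phi_{|k|}|_{L^{-1}}) \cdot \deg L^k \, = \, |k|^d \deg L^{-1}. \]
We then invoke the classical fact that $\deg L^{-1} = \mu(L)$, due to Proudfoot–Speyer and Terao (see \cite{proudfoot2006broken}); the statement of Theorem~\ref{thm:mainintro} already uses this fact. Birationality of $\phi_{|k|}|_{L^{-1}}$ follows because $\zeta \star L^{-1} = L^{-1}$ holds if and only if $\zeta^{-1}\star L = L$, so the stabilizer argument above applies again.

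\emph{Main obstacle.} The main difficulty is the stabilizer/birationality step, since it is exactly where the exponent $d-c(L)+1$ comes from. The second delicate point is the clean citation of $\deg L^{-1} = \mu(L)$.
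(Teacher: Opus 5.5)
Your proof is correct. Its core matches the paper's proof, but it is organized differently around the connected components. Like the paper, you write $L^k = \phi_{-k}(L^{-1})$ for $k<0$ and compute degrees through the power map $\phi_r$. Your projection-formula computation, with the degree of $\phi_r|_X$ equal to the order of the finite stabilizer, is exactly the content of \cite[Theorem 2.3]{DeyGorlachKaihnsa2020}, $\deg X^r = \frac{|{\rm Fix}_r(X)|}{|{\rm Stab}_r(X)|}\, r^d \deg X$, which the paper cites. You also use, as the paper does, that $\zeta \star L^{-1} = L^{-1}$ if and only if $\zeta^{-1}\star L = L$, and you finish with $\deg L^{-1} = \mu(L)$.

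The difference is in how $c(L)$ enters. The paper keeps $L$ arbitrary and takes from the proof of \cite[Theorem 2.6]{DeyGorlachKaihnsa2020} the count $|{\rm Stab}_r(L)| = r^{c(L)-1}$. So in the paper the exponent drop comes from the power map failing to be birational, and for $k>0$ the paper simply cites that theorem. You instead write $L^k$ as a join over the components and use multiplicativity of degree and of $\mu$. This reduces to the connected case, where your eigenspace argument shows the stabilizer is trivial and the power map is birational.

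Your route is self-contained and reproves the cited results rather than importing them. The paper's route is shorter. Your eigenspace argument in fact proves the full stabilizer statement: the stabilizer of $L$ in $T$ is the subtorus constant on components, and its $r$-torsion has order $r^{c(L)-1}$. So you could drop the join reduction and get the formula in one step, which is the paper's argument.

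A small point: no rescaling is needed in that step. Since $\operatorname{diag}(\zeta)$ maps the cone over $L$ onto the cone over $\zeta\star L = L$, it preserves that linear subspace exactly.
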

\begin{proof}
We deduce these formulae from \cite[Theorems 2.3 and 2.6]{DeyGorlachKaihnsa2020}. The statement for $k >0$ is \cite[Theorem 2.6]{DeyGorlachKaihnsa2020}. For $k < 0$, note that $L^k = (L^{-1})^{-k}$. By \cite[Theorem 2.3]{DeyGorlachKaihnsa2020}, we have
\[ \deg L^k \, = \, \frac{|{\rm Fix}_{-k}(L^{-1})|}{|{\rm Stab}_{-k}(L^{-1})|} \, (-k)^d \, \deg(L^{-1}).\]
 Let us write $r = -k$. The sets ${\rm Fix}_{r}(L^{-1})$ and ${\rm Stab}_{r}(L^{-1})$ are defined as follows. Let $G_r = \{ \zeta \in \mathbb{C} \, : \, \zeta^r = 1 \}$ be the group of $r$-th roots of unity and let ${\cal G}_r = G_r^{n+1}/\{(\zeta, \zeta, \ldots, \zeta) \, : \, \zeta^r = 1 \}$. This group acts on $\mathbb{P}^n$ by coordinatewise multiplication. Let 
\[ {\rm Stab}_{r}(L^{-1}) \, = \, \{g \in {\cal G}_r \, : \, g \cdot L^{-1} = L^{-1} \}, \quad {\rm Fix}_r(L^{-1}) \, = \, \{g \in {\cal G}_r \, : \, g_{|L^{-1}} = {\rm id}_{|L^{-1}} \}.\]
For a generic point $p \in L^{-1}$, we have ${\rm Fix}_r(L^{-1}) = \{ g \in {\cal G}_r \, : \, g \cdot p = p \}$. Since $L^{-1}$ is not contained in a coordinate hyperplane, we have ${\rm Fix}_r(L^{-1}) = \{(1, 1, \ldots, 1)\}$. In particular, $|{\rm Fix}_r(L^{-1})| = 1$. For the set ${\rm Stab}_{r}(L^{-1})$, note that $g \cdot L^{-1} = L^{-1}$ if and only if $g\cdot L = L$. Therefore, by the proof of \cite[Theorem 2.6]{DeyGorlachKaihnsa2020}, we have $|{\rm Stab}_{r}(L^{-1})| = r^{c(L)-1}$. The formula $\deg (L^{-1}) = \mu(L)$ deduced from \cite[Lemma 2]{proudfoot2006broken} concludes the proof.
\end{proof}

We now have all the necessary tools in hand to prove a degree formula for $\Erec$. 

\begin{theorem} \label{thm:degErec}
    Let $L \subset \mathbb{P}^n$ be a $d$-dimensional linear space satisfying the assumption \eqref{eq:assum_noloops}. The principal matroid determinant $E_L$ has degree $(d+1) \,  2^{d -c(L)+1} \, \mu(L)$,
    where $c(L)$ is the number of connected components of the matroid $M(L)$, and $\mu(L)$ is the M\"obius invariant. 
\end{theorem}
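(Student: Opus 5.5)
The plan is to read off the degree of ${\rm Res}(L^{-2})$ from the degree of $L^{-2}$, and then to show that the specialization $U = A_z$ neither lowers the degree nor kills the polynomial.

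For the resultant itself, recall from \cite[Chapter 3, Section 2]{GKZbook} that for an irreducible $d$-dimensional $X \subset \mathbb{P}^n$ the Chow form ${\rm Chow}(X)$ has degree $\deg X$ in the primal Pl\"ucker coordinates. Pulling back along $p$ substitutes each Pl\"ucker coordinate by a maximal minor of $U$, which has degree $d+1$ in the $u_{ij}$. Hence ${\rm Res}(X)$ is a homogeneous polynomial of degree $(d+1)\deg X$, exactly as used for point 1 of Theorem \ref{thm:mainEA}. Example \ref{ex:introcontd} illustrates this, where degree $12$ becomes degree $36$. Applying this with $X = L^{-2}$, which is irreducible of dimension $d$ as the closure of the image of the irreducible variety $L\cap T$, and using Lemma \ref{lem:degLk} with $k=-2$, we get $\deg {\rm Res}(L^{-2}) = (d+1)\, 2^{d-c(L)+1}\mu(L)$.

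Next comes the specialization. The map $\iota: z \mapsto A_z$ is linear, with each entry $u_{ij}$ replaced by $a_{ij}z_j$. Therefore $E_L = \iota^*{\rm Res}(L^{-2})$ is homogeneous of the same degree, provided it is not identically zero. This nonvanishing is the main step to check.

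It suffices to exhibit a single $z$ for which $A_z \notin {\cal R}(L^{-2})$. I would first try $z = (1,\dots,1)$, so that $U = A$. For points of the dense torus part, $A x = 0$ with $x = (\ell_j(t)^{-2})_j$ means $\sum_j \ell_j(t)/\ell_j(t)^2 = 0$ at the level of the critical equations of $f(t,1)$. That is, $f(t,z) = \sum_j z_j/\ell_j(t)$ has a critical point on $\mathbb{P}^d\setminus{\cal A}$. Two issues remain:
\begin{itemize}
\item boundary points of $L^{-2}$ outside $T$ must also be excluded;
\item $z=(1,\dots,1)$ might happen to be special.
\end{itemize}
To handle both, I would instead take $z$ generic and use a dimension count. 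Consider the incidence $\{(x,z) : x \in L^{-2},\ A_z x = 0\}$. Since $A_z x = A(z\star x)$, the condition says $z\star x \in \ker A$, which has dimension $n-d-1$ projectively. For each fixed $x$ with all coordinates nonzero, the condition on $z$ is $d+1$ independent linear conditions. For a boundary point $x$ with support $S$, the condition involves only $A_S$ and the coordinates $z_S$, giving ${\rm rank}(A_S)$ conditions. Boundary strata of $L^{-2}$ correspond to coordinate subspaces of smaller dimension, namely $\dim(L^{-2}\cap\{x_i = 0, i\notin S\}) \le {\rm rank}(A_S)-1$. This uses that the boundary of a reciprocal linear space over a flat-type support $S$ has dimension at most ${\rm rank}_{M}(S)-1$, in analogy with \cite{proudfoot2006broken}. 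Given that bound, each stratum contributes an incidence of dimension at most $n-1$ in $z$-space, so a generic $z$ avoids the projection and $E_L\neq 0$.

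The boundary dimension bound is the step I expect to be the main obstacle. If it proves delicate, a fallback is to argue via Theorem \ref{thm:mainintro}(2)-type reasoning that $E_L(z)\ne0$ whenever $z$ is generic, using that the generic rational function $f(t,z)$ has no critical points even at infinity. Once $E_L\not\equiv 0$ is established, the degree equals $(d+1)2^{d-c(L)+1}\mu(L)$.
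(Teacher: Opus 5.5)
Your proposal is correct and follows the paper's argument. You use $\deg {\rm Chow}(L^{-2}) = \deg L^{-2}$ from \cite[Chapter 3, Proposition 2.2]{GKZbook}, the factor $d+1$ coming from the maximal minors, the fact that the linear substitution $U = A_z$ preserves homogeneous degree, and Lemma~\ref{lem:degLk} with $k=-2$.

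Your extra dimension count showing $E_L \not\equiv 0$ is valid and makes explicit a point the paper leaves implicit. The boundary bound you flag as the main obstacle is exactly Proposition~\ref{prop:stratify}: each stratum $L^{-2} \cap T_F$ has dimension ${\rm rank}(F)-1$, and its fiber in $(\mathbb{P}^n)^\vee$ has dimension $n - {\rm rank}(F)$, so the bound is already available.
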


\begin{proof}
    By \cite[Chapter 3, Proposition 2.2]{GKZbook}, we have $\deg {\rm Chow}(L^{-2}) = \deg L^{-2}$. The primal Pl\"ucker coordinates have degree $d+1$ in the entries $u_{ij}$ of $U$, so that the total degree of the resultant ${\rm Res}(L^{-2})$ is $(d+1) \, \deg(L^{-2})$. Pulling back along the linear map $\iota$ does not change the degree. The theorem now follows from Lemma \ref{lem:degLk}.
\end{proof}

The inverses of the $\ell_i$ parametrize $L^{-1} \cap T$, where $T \simeq (\mathbb{C}^*)^n$ is the dense torus of $\mathbb{P}^n$. More precisely, the map $\ell^{-1}: \mathbb{P}^d \setminus {\cal A} \rightarrow L^{-1} \cap T$, given by 
\begin{equation} \label{eq:ellinv} (t_0: \cdots : t_d) \, \longmapsto \, \left (  \frac{1}{\ell_0(t)}: \cdots : \frac{1}{\ell_n(t)}\right)  \end{equation}
is an isomorphism. In particular, $L^{-1} \cap T$ is smooth, irreducible and of dimension $d$. 

\begin{lemma} \label{lem:recAdisc_geom}
    The matroid discriminant variety $\nabla(L^{-1})$ is the closure of the image of 
    \[ Y^\circ \, = \, \{ (x, z) \in \mathbb{P}^n \times (\mathbb{P}^n)^\vee \, : \, A_z \cdot x = 0 \text{ and } x \in L^{-2} \cap T \} \]
    under the coordinate projection ${\rm pr}_2: \mathbb{P}^n \times (\mathbb{P}^n)^\vee \rightarrow (\mathbb{P}^n)^\vee$.
\end{lemma}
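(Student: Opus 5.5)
The plan is to compare the conormal bundle of the smooth open part $L^{-1}\cap T$ with $Y^\circ$ via the coordinatewise squaring-type correspondence $x \mapsto x^{-2}$ relating $L^{-1}\cap T$ and $L^{-2}\cap T$. First I would note that $\nabla(L^{-1})$ is the closure of ${\rm pr}_2({\rm Con}(L^{-1}\cap T))$. Indeed, $L^{-1}\cap T$ is a dense open subset of the smooth locus of the irreducible variety $L^{-1}$, so ${\rm Con}(L^{-1}\cap T)$ is dense in the irreducible conormal variety of $L^{-1}$. Moreover, $L^{-1}\cap T$ is smooth, being isomorphic to $\mathbb{P}^d\setminus{\cal A}$ via \eqref{eq:ellinv}.

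Next I would compute the tangent space explicitly using the parametrization $\ell^{-1}$. At the point $y=\ell^{-1}(t)$, the projective tangent space $T_y L^{-1}$ is spanned by the vectors
\[ \frac{\partial}{\partial t_i}\bigl(\ell_0(t)^{-1},\ldots,\ell_n(t)^{-1}\bigr) \, = \, -\bigl(a_{i0}\,\ell_0(t)^{-2},\ldots,a_{in}\,\ell_n(t)^{-2}\bigr), \qquad i=0,\ldots,d, \]
and $y$ itself lies in their span by Euler's relation. These $d+1$ vectors are linearly independent, since $\ell^{-1}$ is an immersion. Hence the hyperplane $H_z$ contains $T_yL^{-1}$ if and only if
\[ \sum_j z_j\, a_{ij}\, \ell_j(t)^{-2} \, = \, 0 \quad \text{for all } i, \]
that is, $A_z\cdot x=0$ with $x=(\ell_0(t)^{-2}:\cdots:\ell_n(t)^{-2})$. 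This is exactly the critical-point computation for $f(t,z)$ noted earlier.

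Finally, I would observe that the map $t\mapsto(\ell_j(t)^{-2})_j$ is $\phi_{-2}\circ\ell$, whose image is $\phi_{-2}(L\cap T)=L^{-2}\cap T$. Here one should check that $L^{-2}\cap T=\phi_{-2}(L\cap T)$: the latter is the image of an orbit-closed set under the finite morphism $\phi_2$ on $T$, hence closed in $T$. It follows that
\[ {\rm pr}_2\bigl({\rm Con}(L^{-1}\cap T)\bigr) \, = \, \{ z : \exists\, x\in L^{-2}\cap T,\ A_z x=0 \} \, = \, {\rm pr}_2(Y^\circ). \]
Taking closures gives the lemma.

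The main obstacle is this last step, identifying $L^{-2}\cap T$ exactly with $\phi_{-2}(L\cap T)$ rather than just up to closure. As a fallback, equality of the closures ${\rm pr}_2$ images suffices: if $x\in L^{-2}\cap T$ is only a limit of points of $\phi_{-2}(L\cap T)$, the fibers of ${\rm pr}_1$ on $Y^\circ$ are linear spaces of constant dimension over the torus. A continuity argument then shows that ${\rm pr}_2(Y^\circ)$ lies in the closure of ${\rm pr}_2({\rm Con})$.
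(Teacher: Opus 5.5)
Your proposal is correct and follows essentially the same route as the paper. The paper also reads off $T_{\ell^{-1}(t)}L^{-1}$ from the Jacobian $A\cdot{\rm diag}(\ell_0(t)^{-2},\ldots,\ell_n(t)^{-2})$, rewrites the conormal condition as $A_z\cdot x^2=0$, and passes to $Y^\circ$ via the squaring map $(x,z)\mapsto(x^2,z)$. Your observation that $\phi_{-2}(L\cap T)$ is closed in $T$, because finite morphisms are closed, supplies the surjectivity onto $Y^\circ$ that the paper uses without comment, so the fallback continuity argument is not needed.
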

\begin{proof}
    By taking partial derivates of the coordinates of the parametrization $\ell^{-1}: \mathbb{P}^d \setminus {\cal A} \rightarrow L^{-1} \cap T$ we find the following expression for the tangent space $T_{\ell^{-1}(t)} L^{-1}$: 
    \[T_{\ell^{-1}(t)} L^{-1} \, = \, \mathbb{P} {\rm Row} \begin{pmatrix}
        \frac{\partial \ell_j^{-1}}{\partial t_i}
    \end{pmatrix}  \, = \, \mathbb{P} {\rm Row} (A \cdot {\rm diag}(\ell_0(t)^{-2}, \ldots, \ell_n(t)^{-2})). \]
    This is the projectivized row span of the $(d+1) \times (n+1)$ matrix $A_{\ell^{-2}(t)}$ obtained by scaling the $i$-th column of $A$ by $\ell_i(t)^{-2}$ for $i =0, \ldots, n$. 
    Hence, the conormal bundle of $L^{-1} \cap T$ is 
    \begin{equation} \label{eq:conLArec} {\rm Con}(L^{-1} \cap T) \, = \, \{ (x,z) \in \mathbb{P}^n \times (\mathbb{P}^n)^\vee \, : \, A_z \cdot x^2 = 0 \text{ and } x \in L^{-1} \cap T  \}.\end{equation}
    Here we write $x^2 = (x_0^2: \cdots : x_n^2)$. We define the map $\varphi: {\rm Con}(L^{-1} \cap T) \rightarrow Y^\circ$ given by $(x,z) \mapsto (x^2, z)$ and observe that ${\rm pr}_2({\rm Con}(L^{-1} \cap T)) \, = \,  {\rm pr}_2(\varphi({\rm Con}(L^{-1} \cap T))) \, = \, {\rm pr}_2(Y^\circ)$. 
\end{proof}

The geometric description of the matroid discriminant in Lemma \ref{lem:recAdisc_geom} is what motivates us to consider Chow forms and resultants of the ``squared'' reciprocal varieties $L^{-2}$. The inclusion of $\nabla(L^{-1})$ in the zero locus of $\Erec$ is an immediate consequence of Lemma \ref{lem:recAdisc_geom}.

\begin{lemma} \label{lem:ELisprY}
    The zero locus of the principal matroid determinant is the image of 
    \begin{equation} \label{eq:Y} Y \, = \, \{ (x, z) \in \mathbb{P}^n \times (\mathbb{P}^n)^\vee \, : \, A_z \cdot x = 0 \text{ and } x \in L^{-2}  \} \end{equation}
    under the coordinate projection ${\rm pr}_2: \mathbb{P}^n \times (\mathbb{P}^n)^\vee \rightarrow (\mathbb{P}^n)^\vee$. In particular, it contains $\nabla(L^{-1})$. 
\end{lemma}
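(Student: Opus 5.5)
The argument is an unwinding of Definitions \ref{def:res} and \ref{def:principalmatroiddeterminant}, with one subtlety about where the map $p$ is defined.

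\emph{Step 1: resultant variety.} Since $L^{-2}$ is irreducible of dimension $d$, the definition gives ${\cal R}(L^{-2}) = \overline{p^{-1}({\cal C}(L^{-2}))}$. Let ${\cal R}' = \{U : \exists\, x \in L^{-2},\ U x = 0\}$. This set is closed, because it is the projection along the proper factor $\mathbb{P}^n$ of the closed incidence set $\{(U,x) : Ux = 0,\ x \in L^{-2}\} \subset {\rm Mat}_{d+1,n+1}(\mathbb{C}) \times \mathbb{P}^n$. On the open set of full-rank matrices, $p$ is defined and ${\cal R}'$ coincides with $p^{-1}({\cal C}(L^{-2}))$. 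Every rank-deficient $U$ lies in ${\cal R}'$: its kernel has dimension at least $n-d$, so it meets the $d$-dimensional variety $L^{-2}$. Both ${\cal R}(L^{-2})$ and the determinantal locus lie in ${\cal R}'$.

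\emph{Step 2: the Chow form vanishes on ${\cal R}'$.} Is ${\cal R}'$ equal to $V({\rm Res}(L^{-2}))$? On the rank-deficient locus all maximal minors of $U$ vanish. Since ${\rm Chow}(L^{-2})$ is a nonconstant homogeneous polynomial in the Plücker coordinates, ${\rm Res}(L^{-2}) = p^*{\rm Chow}(L^{-2})$ vanishes there. On the full-rank locus, ${\rm Res}$ vanishes exactly on $p^{-1}({\cal C}(L^{-2}))$. Hence $V({\rm Res}(L^{-2})) = {\cal R}'$ set-theoretically.

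\emph{Step 3: pull back along $\iota$.} We get
\[ V(E_L) = \iota^{-1}(V({\rm Res}(L^{-2}))) = \{ z : \exists\, x \in L^{-2},\ A_z x = 0 \} = {\rm pr}_2(Y). \]
The condition $A_z x = 0$ is invariant under scaling $z$. Passing to $(\mathbb{P}^n)^\vee$ is therefore harmless, and we read the statement there, since $E_L$ is homogeneous.

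\emph{Step 4: the containment.} We have $Y^\circ \subset Y$, so ${\rm pr}_2(Y^\circ) \subset {\rm pr}_2(Y)$. The set ${\rm pr}_2(Y)$ is closed, being the image of a closed set under a projection with proper fibre $\mathbb{P}^n$. By Lemma \ref{lem:recAdisc_geom}, $\nabla(L^{-1}) = \overline{{\rm pr}_2(Y^\circ)}$, so $\nabla(L^{-1}) \subset {\rm pr}_2(Y)$.

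The only genuinely delicate point is Step 2, namely the treatment of rank-deficient $U$, where $p$ is undefined. For $U = A_z$ this locus is nonempty, for example when some $z_i = 0$. It is handled by the observation that such a kernel always meets $L^{-2}$ and that ${\rm Res}$ vanishes there. The rest is formal.
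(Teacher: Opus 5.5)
Your proof is correct, and it reaches the key identity $V({\rm Res}(L^{-2}))=\{U : \exists\, x\in L^{-2},\ Ux=0\}$ by a different route from the paper's.

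\emph{The paper's route.} It works with the incidence variety ${\cal Y}=\{(x,U) : Ux=0,\ x\in L^{-2}\}$. The inclusion ${\cal R}(L^{-2})\subseteq {\rm pr}_U({\cal Y})$ comes from the definition. The reverse inclusion comes from ${\cal Y}$ being irreducible, as a bundle over $L^{-2}$, of dimension $(d+1)(n+1)-1$. Its projection is then an irreducible closed set of dimension at most that of the hypersurface ${\cal R}(L^{-2})$ it contains, so the two coincide. The paper then uses that ${\rm Res}(L^{-2})$ is the defining equation of ${\cal R}(L^{-2})$.

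\emph{Your route.} You compute the zero set of the polynomial $p^*{\rm Chow}(L^{-2})$ directly, splitting ${\rm Mat}_{d+1,n+1}(\mathbb{C})$ into two pieces. On the full-rank locus the zero set is $p^{-1}({\cal C}(L^{-2}))$. On the rank-deficient locus all maximal minors vanish, and every $\mathbb{P}\ker U$ has dimension $\geq n-d$, so it meets $L^{-2}$. This avoids any irreducibility or dimension count, and you never need to identify the closure $\overline{p^{-1}({\cal C}(L^{-2}))}$. It also makes explicit the rank-deficient matrices $A_z$, which the paper handles only implicitly through that closure. You also justify closedness of ${\rm pr}_2(Y)$ by properness, which the paper uses tacitly.

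\emph{What each buys.} The paper's argument additionally identifies ${\cal R}(L^{-2})$ itself, not just $V({\rm Res}(L^{-2}))$, with the irreducible set ${\rm pr}_U({\cal Y})$. In other words, the closure in Definition \ref{def:res} adds exactly the rank-deficient matrices. Yours is more elementary and self-contained for the statement at hand.

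A small correction to your closing remark: a single $z_i=0$ need not make $A_z$ rank-deficient. For the uniform matroid of Example \ref{ex:intro}, any three columns of $A$ still span. Rank deficiency occurs when the support of $z$ is not spanning in $M(L)$, for instance when $z$ vanishes outside a proper flat. Also, "kernel of dimension at least $n-d$" should be read projectively, as you evidently intend. The vector-space kernel has dimension at least $n-d+1$.
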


\begin{proof}
    The resultant hypersurface ${\cal R}(L^{-2}) \subset {\rm Mat}_{d+1, n+1}(\mathbb{C})$ is the coordinate projection of 
    \[ {\cal Y} \, = \, \{ (x, U) \in \mathbb{P}^n \times {\rm Mat}_{d+1, n+1}(\mathbb{C}) \, : \, U \cdot x  = 0 \text{ and } x \in L^{-2} \}\]
    to the space ${\rm Mat}_{d+1, n+1}(\mathbb{C})$.
    In fact, defining ${\rm pr}_U:\mathbb{P}^n\times {\rm Mat}_{d+1, n+1}(\mathbb{C})\to {\rm Mat}_{d+1, n+1}(\mathbb{C})$ as the coordinate projection, the inclusion ${\cal R}(L^{-2}) \subseteq {\rm pr}_U({\cal Y})$ is clear from the definition, and the other inclusion follows from the fact that ${\cal Y}$ is an irreducible hypersurface (it is a bundle over $L^{-2}$). Specializing the resultant polynomial by setting $U = A_z$, as in the definition of $\Erec$, amounts to replacing ${\cal Y}$ by $Y$. Since $Y^\circ \subseteq Y$, we have $\overline{{\rm pr}_2(Y^\circ)} \subseteq {\rm pr}_2(Y)$. This implies the inclusion $\nabla(L^{-1}) \subseteq {\rm pr}_2(Y)$ by Lemma \ref{lem:recAdisc_geom}.
\end{proof}

We have already seen in Example \ref{ex:introcontd} that the inclusion $\overline{{\rm pr}_2(Y^\circ)} \subseteq {\rm pr}_2(Y)$ is strict --- the principal matroid determinant is a reducible polynomial. To identify the other factors of $\Erec$ listed in Theorem \ref{thm:mainintro}(2), we will make use of a well-known stratification of $L^{-1}$. 

A \emph{flat} of the matroid $M(L)$ of rank $r$ is a subset $F \subseteq \{0, \ldots, n\}$ such that the columns of $A$ indexed by $F$ form a maximal submatrix of rank $r$. In particular, $M(L)$ has a unique flat of rank $r = d+1$ given by $F = \{0, \ldots, n\}$. We write ${\cal F}(M(L))$ for the set of flats of $M(L)$. 

\begin{example}
    If $L$ represents the uniform matroid $M(L)$ of rank $d+1$ on $n+1$ elements, then ${\cal F}(M(L))$ contains the empty flat $\emptyset$, the unique rank-$(d+1)$ flat $\{0, \ldots, n\}$, and $\binom{n+1}{r}$ flats of rank $r$ given by all $r$-element subsets of $\{0, \ldots, n \}$. 
 \end{example}

\begin{example} \label{ex:braid1}
    The \emph{braid arrangement} in $\mathbb{P}^3$ is the arrangement of six planes represented~by 
    \[ A' \, = \, \begin{pmatrix}
        1 & 1 & 1 & 0 & 0 & 0 \\ 
        -1 & 0 & 0 & 1 & 1 & 0 \\ 
        0 & -1 & 0 & -1 & 0 & 1 \\
        0 & 0 & -1 & 0 & -1 & -1
    \end{pmatrix}.\]
    This matrix has rank $3$, so below we shall work with its first three rows and write $A \in {\rm Mat}_{3,6}(\mathbb{C})$ for that submatrix. The linear space $L$ is the plane in $\mathbb{P}^5$ spanned by the rows of $A$. We have
    \[ \begin{matrix} {\cal F}(M(L)) \, = \, \{ \emptyset, \, \{0\}, \, \{1\}, \, \{2\}, \, \{3\}, \, \{4\}, \, \{5\}, \, \{0,5\}, \, \{1,4\}, \, \{2,3\}, \\  \quad \quad \quad \quad \quad \, \{0,1,3\}, \, \{0,2,4\}, \, \{1,2,5\}, \, \{3,4,5\}, \, \{0,1,2,3,4,5\} \}.\end{matrix} \]
    There are six flats of rank one, seven of rank two, and one of rank three. 
\end{example}

For any subset $I \subseteq \{ 0, \ldots, n\}$, let $\Lambda_I \subseteq \mathbb{P}^n$ be the coordinate subspace given by $x_i = 0$ for all $i \notin I$. That is, $\Lambda_I \simeq \mathbb{P}^{|I|-1}$. The torus of $\Lambda_I$ is denoted by 
\[ T_I = \{ x \in \Lambda_I \, : \, \prod_{i \in I} x_i \neq 0 \} \simeq (\mathbb{C}^*)^{|I|-1}. \]
These tori stratify $\mathbb{P}^n$. To stratify the subvarieties $L^{-k}$, we only need $T_F$ for flats  $F$ of~$M(L)$. Let $A_F$ be the $(d+1) \times |F|$  submatrix of $A$ whose columns are indexed by $F$. The projectivized row span of $A_F$ is the projection $L_F$ of $L$ to the coordinate subspace $\Lambda_F$.
\begin{proposition} \label{prop:stratify}
    Let $k \in \mathbb{N} \setminus \{0\}$. The variety $L^{-k}$ equals the disjoint union 
    \begin{equation} \label{eq:decompL-k} L^{-k} \, = \, \bigsqcup_{F \in {\cal F}(M(L))} L^{-k} \cap T_F.\end{equation}
    Equivalently, if $I \subseteq \{0, \ldots, n\}$ is not a flat of $M(L)$, then $L^{-k} \cap T_I = \emptyset$. 
    For $F \in {\cal F}(M(L))$, we have that $L^{-k} \cap \Lambda_F \subseteq \Lambda_F \subseteq \mathbb{P}^n$ is $L_F^{-k}$. That is, it equals the closure of the image under $\phi_{-k}$ of $(L \cap T_F) \subset \Lambda_F \simeq \mathbb{P}^{|F|-1}$. Its dimension is $({\rm rank}(F)-1)$. 
\end{proposition}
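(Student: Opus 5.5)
The plan is to reduce everything to the parametrization of $L \cap T$ and then analyze limits of points of $\phi_{-k}(L\cap T)$ in each coordinate stratum $T_I$ of $\mathbb{P}^n$. The strata $T_I$, $I \neq \emptyset$, partition $\mathbb{P}^n$, so \eqref{eq:decompL-k} is equivalent to the claim that $L^{-k}\cap T_I=\emptyset$ unless $I$ is a flat. After this I identify $L^{-k}\cap\Lambda_F$ with $L_F^{-k}$ and compute its dimension.

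\textbf{Step 1 (key inclusion).} Take $y\in L^{-k}\cap T_I$. By the curve selection lemma there is an analytic arc $x(\epsilon)\in L\cap T$, $\epsilon\to 0$, with $\phi_{-k}(x(\epsilon))\to y$. Write $x_i(\epsilon)=c_i\epsilon^{w_i}+\cdots$ with $c_i\neq 0$. The coordinates $x_i^{-k}$ that survive in the limit are those minimizing $-kw_i$, that is, those maximizing $w_i$. Hence $I=\{i : w_i = \max_j w_j\}$.

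\textbf{Step 2 ($I$ is a flat).} Here I use the tropical description of $L$. Since $x(\epsilon)$ lies in the row span of $A$, every circuit relation $\sum_{i\in C}\lambda_i x_i=0$ forces the minimum of $w$ on $C$ to be attained at least twice. Suppose $I$ were not a flat. Then some $j\notin I$ lies in the closure of $I$, so there is a circuit $C\subseteq I\cup\{j\}$ containing $j$. On $C$ the weights satisfy $w_j<w_i$ for all $i\in C\setminus\{j\}$. So the minimum is attained only at $j$, a contradiction. Equivalently, $w$ lies in the Bergman fan, and the top set $I$ of any $w$ in the Bergman fan is a flat. This is the step I expect to need the most care: I must match the max/min convention (the reciprocal flips it) and note that the case $I=\{0,\dots,n\}$ is fine.

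\textbf{Step 3 (identify the strata).} Next I show $L^{-k}\cap\Lambda_F = L_F^{-k}$.
\begin{itemize}
\item[$(\supseteq)$] Given $x_F\in L\cap T_F$ (meaning the projection to $F$ has nonzero coordinates), lift it to $x\in L$. Then perturb: take $x+\epsilon^{-1}v$ with $v\in L$ vanishing on $F$ and nonzero off $F$. Such $v$ exists because $F$ is a flat, so the subspace of $L$ vanishing on $F$ has support exactly the complement of $F$ (the complement of a flat is a union of cocircuits). Along this curve the coordinates off $F$ blow up, so $\phi_{-k}$ sends them to $0$ and the curve tends to $\phi_{-k}(x_F)$.
\item[$(\subseteq)$] In Step 1 with $I\subseteq F$, the leading coefficients $(c_i)_{i\in I}$ are the leading term of the arc. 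After rescaling by $\epsilon^{-\max w}$, the arc's coordinates in $I$ lie in the projection $L_I$ and are nonzero. So the limit is $\phi_{-k}$ of a point of $L_I\cap T_I$, which lies in $L_F^{-k}$ since $L_I$ is the projection of $L_F$.
\end{itemize}

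\textbf{Step 4 (dimension).} Finally, $L_F$ is the projectivized row span of $A_F$, which has rank ${\rm rank}(F)$. Also, $\phi_{-k}$ is finite-to-one on the torus. Therefore $\dim L_F^{-k}=\dim L_F={\rm rank}(F)-1$.
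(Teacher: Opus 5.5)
Your proof is correct, but it takes a different route from the paper. The paper proves nothing from scratch. For $k=1$ it cites Proudfoot--Speyer (their Proposition 5), which says exactly that the torus strata of $L^{-1}$ are indexed by flats, with closures $L_F^{-1}$. For $k>1$ it writes $L^{-k}=\phi_k(L^{-1})$. It then uses that $\phi_k$ is a finite, hence closed, morphism of $\mathbb{P}^n$ with $\phi_k^{-1}(T_I)=T_I$. So the decomposition, the closures of strata and their dimensions transfer from the case $k=1$.

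You instead prove the statement directly for all $k$ at once. The limit of $\phi_{-k}$ along an arc in $L\cap T$ lies in $T_I$, where $I$ is the top level set of the valuation vector $w$. The circuit description of ${\rm Trop}(L)$ then forces $I$ to be a flat. Your route is self-contained, and $k$ plays no role beyond rescaling the valuation by $-k$. It is also the same arc mechanism the paper itself uses later, in the proof of Theorem \ref{thm:characteristic variety}, when it invokes this proposition. The paper's route is shorter, but all the matroid content is in the citation.

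Three small points to tighten:
\begin{itemize}
\item In Step 3 $(\subseteq)$, $(c_i)_{i\in I}$ is not the leading term of the arc; the lowest-order terms sit on the argmin of $w$. What you actually use is that $\epsilon^{-\max w}\,x_I(\epsilon)\in{\rm Row}(A_I)$ converges to $c_I$.
\item The phrase ``lies in $L_F^{-k}$ since $L_I$ is the projection of $L_F$'' is not a reason on its own. You need that $I$ is a flat of $M(L_F)=M(L)|_F$, which holds because closure there is ${\rm cl}(I)\cap F=I$, and then your $(\supseteq)$ argument applied to $L_F$. It is simpler to note that $y\in\Lambda_F$ means $\max w$ is attained inside $F$. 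Then the projected arc $x_F(\epsilon)\in L_F\cap T_F$ already satisfies $\phi_{-k}(x_F(\epsilon))\to y$.
\item Curve selection gives an arc in the image $\phi_{-k}(L\cap T)$. Lifting it to $L\cap T$ may need a Puiseux reparametrization, or you can apply curve selection to the closure of the graph of $\phi_{-k}|_{L\cap T}$. This does not affect the argument.
\end{itemize}
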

\begin{proof}
    For $k = 1$, this is \cite[Proposition 5]{proudfoot2006broken}. 
    For $k > 1$, we use the fact that $L^{-k} = \phi_k (L^{-1})$. From this, the decomposition \eqref{eq:decompL-k} follows. Moreover, $\phi_{-k}(L \cap T_F) = (\phi_k \circ \phi_{-1})(L \cap T_F) = \phi_k(L^{-1} \cap T_F)$. 
    Since $\phi_k$ is finite and continuous, the closure of the image of $\phi_k$ is the image of the closure: $\overline{\phi_k(L^{-1} \cap T_F)} = \phi_k(L^{-1} \cap \Lambda_F) = L^{-k} \cap \Lambda_F$.
\end{proof}

\begin{theorem} \label{thm:factorization}
    The zero locus of the principal matroid determinant $\Erec$ is given~by 
    \[ V(\Erec) \, = \, \bigcup_{F \in {\cal F}(M(L))} \nabla(L^{-1} \cap \Lambda_F) \quad \subseteq \quad (\mathbb{P}^{n})^\vee.  \]
\end{theorem}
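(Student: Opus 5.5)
We use Lemma \ref{lem:ELisprY}, which identifies $V(\Erec)$ with ${\rm pr}_2(Y)$, and then split $Y$ along the stratification of Proposition \ref{prop:stratify} for $k=2$. Writing $Y_F = \{(x,z) \in Y : x \in L^{-2}\cap T_F\}$, the decomposition $L^{-2} = \bigsqcup_F L^{-2}\cap T_F$ gives $Y = \bigsqcup_{F} Y_F$, and therefore
\[ V(\Erec) \, = \, \bigcup_{F \in {\cal F}(M(L))} {\rm pr}_2(Y_F). \]
Here $\nabla(L^{-1}\cap\Lambda_F)$ is the dual variety of $L_F^{-1} \subseteq \Lambda_F$, taken inside $(\mathbb{P}^n)^\vee$. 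Its points are the hyperplanes $H_z$ containing some tangent space $T_yL_F^{-1}$ at a smooth point $y$; the coordinates $z_j$ with $j\notin F$ are free. The theorem then follows from two inclusions: ${\rm pr}_2(Y_F)\subseteq \nabla(L^{-1}\cap\Lambda_F)$ for every flat $F$, and $\nabla(L^{-1}\cap\Lambda_F)\subseteq {\rm pr}_2(Y)$.

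\textbf{Identifying $Y_F$.} Take $x \in L^{-2}\cap T_F$. Since $x_j = 0$ for $j \notin F$, we have $A_z x = A_F\,{\rm diag}(z_F)\, x_F = (A_F)_{z_F} x_F$. This condition involves only $z_F$ and $x_F$, and $x_F \in L_F^{-2}\cap T_F$ by Proposition \ref{prop:stratify}. Now $A_F$ has rank $r = {\rm rank}(F)$, and its projectivized row span is $L_F$. Choose $r$ independent rows $A'_F$ of $A_F$. The remaining rows are linear combinations of these, so the condition $A_F\,{\rm diag}(z_F)\,x_F=0$ is equivalent to $A'_F\,{\rm diag}(z_F)\,x_F = 0$. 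Thus $Y_F$ is exactly the set $Y^\circ$ of Lemma \ref{lem:recAdisc_geom} for the linear space $L_F \subseteq \Lambda_F$, with the coordinates $z_j$, $j\notin F$, added as free factors. $L_F$ has no loops, since $L$ has none. We may not have $\dim L_F < |F|-1$, but that case is harmless, and we treat it at the end.

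\textbf{The two inclusions.} By Lemma \ref{lem:recAdisc_geom} applied to $L_F$, the closure of ${\rm pr}_2(Y_F)$ is $\nabla(L_F^{-1}) \times \mathbb{C}^{F^c}$, projectivized, which is $\nabla(L^{-1}\cap\Lambda_F)$. This gives ${\rm pr}_2(Y_F)\subseteq\nabla(L^{-1}\cap\Lambda_F)$. For the reverse inclusion, Lemma \ref{lem:ELisprY} shows that ${\rm pr}_2(Y)$ is closed, since $Y$ is closed in $\mathbb{P}^n\times(\mathbb{P}^n)^\vee$ and the projection is proper. We also have ${\rm pr}_2(Y_F)\subseteq{\rm pr}_2(Y)$. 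Taking closures gives $\nabla(L^{-1}\cap\Lambda_F)\subseteq V(\Erec)$.

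\textbf{Main obstacle.} The delicate point is matching the conormal description in the proof of Lemma \ref{lem:recAdisc_geom} for $L_F$, which is written with the reduced matrix $A'_F$ having full row rank $r$, against the full $(d+1)\times|F|$ matrix $A_F$. A closely related concern is that this description uses only the open part $L_F^{-1}\cap T_F$ and not all smooth points of $L_F^{-1}$. Lemma \ref{lem:recAdisc_geom} already handles this, because the dual variety is the closure of the image of the conormal bundle over any dense open subset of the smooth locus. We also need the degenerate cases. When $\dim L_F = |F|-1$, for example for singletons or for independent flats, $L_F^{-1}=\Lambda_F$. Its dual is then the subspace $\{z_j=0 : j\in F\}$. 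This agrees with ${\rm pr}_2(Y_F)$: here $(A_F)_{z_F}$ is invertible up to column scaling, so $A_Fz_Fx_F=0$ with $x_F\in T_F$ forces $z_F=0$. This is consistent with the convention $\Delta = z_i$ for singletons.
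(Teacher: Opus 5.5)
Your proposal is correct and takes essentially the same route as the paper. Both arguments decompose $Y$ into the pieces $Y_F$ via Proposition~\ref{prop:stratify}, identify ${\rm pr}_2(Y_F)$ with the projection of the conormal bundle of the open stratum $L^{-1}\cap T_F$ (the paper writes $Y_F=\varphi({\rm Con}(L^{-1}\cap T_F))$ directly from \eqref{eq:conLArec}), and then take closures using Lemma~\ref{lem:ELisprY}. Your reduction of $A_F$ to $r$ independent rows, and your separate treatment of independent flats (where $L_F^{-1}=\Lambda_F$ and the dual is $V(z_j : j\in F)$), make explicit details the paper leaves implicit.
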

\begin{proof}
    First, we use Proposition \ref{prop:stratify} to decompose $Y$ from \eqref{eq:Y} as follows: 
    \begin{equation} \label{eq:decompY} Y \, = \, \bigsqcup_{F \in {\cal F}(M(L))} \{ (x, z) \in \mathbb{P}^n \times (\mathbb{P}^n)^\vee \, : \, A_z \cdot x = 0 \text{ and } x \in L^{-2} \cap T_F  \}. \end{equation}
    Let us write $Y_F$ for the component associated with the flat $F$. Let $\varphi: \mathbb{P}^n \times (\mathbb{P}^n)^\vee \rightarrow \mathbb{P}^n \times (\mathbb{P}^n)^\vee$ be the map $(x, z) \mapsto (x^2, z)$ from the proof of Lemma \ref{lem:recAdisc_geom}. We see from \eqref{eq:conLArec} that 
    \[ Y_F \, = \, \varphi( {\rm Con}(L^{-1} \cap T_F)), \]
    so that ${\rm pr}_2(Y_F) = {\rm pr}_2({\rm Con}(L^{-1} \cap T_F))$. Therefore, we have
    \begin{equation}  \label{eq:pr2Y=...} {\rm pr}_2(Y) \, = \, \bigcup_{F \in {\cal F}(M(L))} {\rm pr}_2({\rm Con}(L^{-1} \cap T_F)). \end{equation} Since $L^{-1} \cap T_F$ is dense in $L^{-1} \cap \Lambda_F$, so is ${\rm pr}_2({\rm Con}(L^{-1} \cap T_F))$ in $\nabla(L^{-1} \cap \Lambda_F)$. Taking closures on both sides of \eqref{eq:pr2Y=...} and applying Lemma \ref{lem:ELisprY}, we obtain the statement of the theorem. 
\end{proof}

\begin{corollary} \label{cor:factorization}
    There exist positive integers $m_F$ such that the principal matroid determinant $E_L$ factors as follows: 
    \[ E_L \, = \, \prod_{F \in {\cal F}(M(L))} \Delta(L^{-1} \cap \Lambda_F)^{m_F}.\]
\end{corollary}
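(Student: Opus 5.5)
The corollary follows from Theorem \ref{thm:factorization} by a purely algebraic argument: a polynomial whose zero locus is a union of irreducible hypersurfaces is, up to a constant, a product of the defining equations of those hypersurfaces raised to positive powers. There are two points to settle. First, one must make sense of the factors $\Delta(L^{-1}\cap\Lambda_F)$ that are not hypersurfaces, which are set to $1$. Second, one must show that every genuine hypersurface $\nabla(L^{-1}\cap \Lambda_F)$ occurs with a \emph{positive} multiplicity $m_F$.

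The steps are as follows.

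\begin{enumerate}
\item By Proposition \ref{prop:stratify}, $L^{-1}\cap\Lambda_F = L_F^{-1}$ is irreducible, because it is the closure of the image of the irreducible torus $L\cap T_F$. Its dual variety $\nabla(L^{-1}\cap\Lambda_F)$, viewed inside $(\mathbb{P}^n)^\vee$, is therefore irreducible. It is the cone over $\nabla(L_F^{-1})\subset(\mathbb{P}^{|F|-1})^\vee$ in the coordinates $z_i$, $i\notin F$, so its defining polynomial is $\Delta(L_F^{-1})$, expressed in the variables $z_i$ with $i\in F$. This applies to every flat $F$ for which that dual is a hypersurface. For singleton flats $F=\{i\}$, the variety $L^{-1}\cap\Lambda_F$ is the point $e_i$, its dual is the hyperplane $z_i=0$, and this matches the convention $\Delta = z_i$. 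The empty flat contributes nothing.

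\item Since $\Erec$ is a polynomial on $(\mathbb{P}^n)^\vee$, it is either zero or its zero set is a hypersurface. It is nonzero: Theorem \ref{thm:degErec} gives a finite degree, and alternatively one can exhibit a point $z$ outside every $\nabla(L^{-1}\cap\Lambda_F)$, noting that each such set is a proper closed subset.

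\item Factor $\Erec=c\prod_j g_j^{e_j}$ into irreducible polynomials. Then $V(\Erec)=\bigcup_j V(g_j)$, and each $V(g_j)$ is an irreducible hypersurface.

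\item Compare this with Theorem \ref{thm:factorization}. The union $\bigcup_F \nabla(L^{-1}\cap\Lambda_F)$ is a hypersurface that is a finite union of irreducible closed sets. Its irreducible components of codimension one are exactly the $\nabla(L^{-1}\cap\Lambda_F)$ that are hypersurfaces. Components of higher codimension cannot appear as irreducible components of a hypersurface, so each of them must lie inside some hypersurface component.
\end{enumerate}

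Matching irreducible components on both sides identifies each $V(g_j)$ with some hypersurface $\nabla(L^{-1}\cap\Lambda_F)$. It also shows that every such hypersurface appears as some $V(g_j)$. Hence $g_j=\Delta(L^{-1}\cap\Lambda_F)$ up to scaling, with exponent $m_F=e_j\ge1$. For the non-hypersurface flats we have $\Delta=1$, so any exponent works and we take $m_F=1$ to keep $m_F$ positive. The constant $c$ is absorbed into the stated ambiguity of $\Erec$ up to scaling.

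The step I expect to be the main obstacle is that distinct flats might yield the same discriminant hypersurface, which would make $m_F$ ill-defined. I would try to rule this out by noting that $\Delta(L_F^{-1})$ involves exactly the variables indexed by $F$ when it is nontrivial, so distinct flats give distinct polynomials. If that is not clear, the fallback is to assign the full exponent to one of the coinciding flats and exponent $1$ (contributing factor $1$) to the others. This fallback needs care, because it produces an extra factor whenever the coinciding polynomial is nontrivial. The cleanest route is therefore to check that $\nabla(L_F^{-1})$, when it is a hypersurface, is not a cone in any of the variables $z_i$ with $i\in F$, which should hold because $L_F^{-1}$ is not contained in a coordinate hyperplane of $\Lambda_F$.
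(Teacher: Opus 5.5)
Your proposal is correct and follows the paper's route. The paper gives no separate argument and reads the corollary off Theorem \ref{thm:factorization}, matching the irreducible components of $V(E_L)$ with the irreducible hypersurfaces $\nabla(L^{-1}\cap\Lambda_F)$ exactly as you do. The point the paper leaves implicit, and which you flag, is that distinct flats give distinct factors; your variable-support idea settles it via reflexivity: if $\Delta(L_F^{-1})$ omitted some $z_i$ with $i\in F$, then $\nabla(L_F^{-1})$ would be a cone with vertex $e_i$, forcing $L_F^{-1}\subseteq\{x_i=0\}$, which contradicts the density of $L_F^{-1}\cap T_F$ in $L_F^{-1}$.
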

Conjecture \ref{conj:multiplicities} predicts the values of the exponents $m_F$ in terms of the geometry of $L^{-2}$.

\begin{example}
    Let $A$ be as in Example \ref{ex:braid1}. The surface $L^{-1} \subset \mathbb{P}^5$ has degree $\mu(L) = 6$. Its ideal is generated by four quadratic trinomials, each associated to one of the circuits of $M(L)$ \cite[Theorem 4]{proudfoot2006broken}. The matroid discriminant of $L$ is an irreducible polynomial $\Delta(L^{-1})$ of degree 8 in $z_0, z_1, \ldots, z_5$. We display a few of its 1185 terms for concreteness: 
    \[ \Delta(L^{-1}) \, = \, z_0^8 + 8 z_0^7 z_1 + 8 z_0^7 z_2 - 8 z_0^7 z_3 - 8 z_0^7 z_4 + 28 z_0^6 z_1^2 + 56 z_0^6 z_1 z_2 - 40 z_0^6 z_1 z_3 - 48 z_0^6 z_1 z_4 + \cdots z_5^8.\]
    The rank-two submatrix corresponding to the flat $F = \{0,1,3 \}$ is 
    \[ A_{\{0,1,3\}} \, = \, \begin{pmatrix}
        1 & 1 & 0 \\ -1 & 0 & 1 \\ 0 & -1 & -1
    \end{pmatrix}.\]
    Its reciprocal row span $L_{\{0,1,3\}}^{-1}$ is a curve in $\mathbb{P}^2$ with coordinates $x_0, x_1, x_3$ given by $x_1x_3 - x_0x_3 + x_0x_1 = 0$. The stratum $L^{-1} \cap T_{\{0,1,3\}}$ is explicitly given by  
    \[  \{ x \in \mathbb{P}^5 \, : \, x_2 = x_4 = x_5 = 0, \,  \, x_1x_3 - x_0x_3 + x_0x_1 = 0, \,  \, x_0x_1x_3 \neq 0 \}. \]
    The reciprocal discriminant of its closure is $\Delta(L_{\{0,1,3\}}^{-1} \cap \Lambda_{\{0,1,3\}}) = z_0^2 + z_1^2 + z_3^2 +z_0z_1 -z_0z_3 + z_1z_3$. 
    The surface $L^{-2} \subset \mathbb{P}^5$ has degree 24, and its ideal is generated by ten quartics. The principal matroid determinant of $L$ has degree $(d+1) 2^d \mu(L) = 3 \cdot 24 = 72$. The resultant in this example is too big to compute. Conjecture \ref{conj:multiplicities} predicts that the polynomial $E_L$ factors as 
    \[ z_0^8 \, z_1^8 \, z_2^8 \, z_3^8 \, z_4^8 \, z_5^8 \, \Delta_{013}^2 \, \Delta_{024}^2 \, \Delta_{125}^2 \, \Delta_{345}^2 \, \Delta(L^{-1}).\]
    Here $\Delta_{013} = \Delta(L_{\{0,1,3\}} \cap \Lambda_{\{0,1,3\}})$ is the polynomial computed above, and similarly for the rank-two flats $\{0,2,4\}, \{1,2,5\}, \{3,4,5\}$. The matroid discriminant varieties corresponding to the other three flats of rank two have codimension two: their reciprocal varieties are lines contained in the coordinate subspaces $\Lambda_{F}$. These discriminant polynomials are equal to 1. 
\end{example}

\section{Parametrization and tropicalization} \label{sec:4}

In this section, we study \emph{matroid discriminants}. By point 2 of Theorem \ref{thm:mainintro}, these are the building blocks of the principal matroid determinant. We continue to assume that the matroid $M(L)$ has no loops, see \eqref{eq:assum_noloops}. We start with a parametrization. 

The Horn-Kapranov uniformization from Proposition \ref{prop:HornKapranov} has a natural analog in our matroid setting. Let $B \in \mathbb{R}^{(n+1) \times (n-d)}$ be a matrix whose columns form a basis for $\ker A$. In particular, the matrix product $A \cdot B$ yields a $(d+1) \times (n-d)$ matrix with zero entries. The $(n-d-1)$-dimensional projectivized column span of $B$ is $L^{\perp}$. 
We define the \emph{Hadamard product} $X \star Y$ of two varieties $X, Y \subseteq \mathbb{P}^n$ to be the Zariski closure of
\[ \{ \, (x_0y_0: \cdots: x_ny_n) \, : \, x \in X, \, y \in Y, \, x_i y_i \text{ are not all zero } \}.\]
The following proposition uses the previously introduced notation $L^2 = \phi_2(L)$, see \eqref{eq:phi_k}. 
\begin{proposition} \label{prop:reciprocalHornKapranov}
The matroid discriminant variety of $L$ is given by $\nabla(L^{-1}) = L^2 \star L^\perp$. 
\end{proposition}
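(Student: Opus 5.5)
We prove the proposition by mimicking the proof of Proposition \ref{prop:HornKapranov}: we use the conormal description \eqref{eq:conLArec} and then solve the tangency condition explicitly for $z$.

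By Definition \ref{def:disc} and \eqref{eq:conLArec}, $\nabla(L^{-1})$ is the closure of the set of $z \in (\mathbb{P}^n)^\vee$ for which there exist $t \in \mathbb{P}^d \setminus {\cal A}$ with $A_z \cdot \ell^{-2}(t) = 0$. Here $\ell^{-2}(t) = (\ell_0(t)^{-2} : \cdots : \ell_n(t)^{-2})$. We use that $L^{-1}\cap T$ is dense in $L^{-1}$ and consists of smooth points; since $L^{-1}\cap T$ is smooth, the closures of ${\rm pr}_2$ of the two conormal bundles agree. The condition reads $A \cdot (z \star \ell^{-2}(t)) = 0$, where $\star$ denotes the coordinatewise product. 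Equivalently, $z \star \ell^{-2}(t) \in \ker A$, i.e. $z \star \ell^{-2}(t) = B u$ for some $u \in \mathbb{C}^{n-d}$ with $Bu$ not identically zero. Since $\ell_i(t) \neq 0$, we can solve uniquely for $z$:
\[ z_i \, = \, \ell_i(t)^2 \, (b_i \cdot u), \qquad i = 0,\ldots,n. \]
Thus the fibre of the conormal bundle over $\ell^{-1}(t)$ maps to $\{ \ell^2(t) \star y : y \in L^\perp \}$, with $y$ excluding points where all products vanish. Here $\ell^2(t) = (\ell_0(t)^2:\cdots:\ell_n(t)^2)$ runs over $\phi_2(L\cap T) = L^2 \cap T$. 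This gives the uniformization $h(t,u) = \ell^2(t) \star (Bu)$, the exact analog of $h_A(t,u) = \Phi_A(t^{-1})\star(Bu)$.

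It remains to compare closures: ${\rm pr}_2({\rm Con}(L^{-1}\cap T))$ equals $\{x \star y : x \in L^2\cap T,\ y \in L^\perp\}$, while $L^2\star L^\perp$ is defined as the closure of the products over all $x\in L^2$, including boundary points $x\notin T$. One inclusion is immediate. For the other, we use that $L^2 \cap T$ is dense in $L^2$ (because $L$ is not in a coordinate hyperplane, $L\cap T$ is dense in $L$ and $\phi_2$ is continuous). The multiplication map on the open set where $x\star y \neq 0$ is continuous, so every product $x\star y$ with $x \in L^2$ is a limit of products with $x \in L^2\cap T$. Hence both closures agree.

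The main obstacle is only this bookkeeping at the boundary. We need the Hadamard product closure and the discriminant closure to coincide, together with the check that the relevant sets are irreducible, being images of the irreducible $(\mathbb{P}^d\setminus{\cal A})\times\mathbb{P}^{n-d-1}$. The core computation is the one-line inversion above.
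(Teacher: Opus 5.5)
Your proposal is correct and follows essentially the paper's proof: the paper computes the tangent spaces of $L^{-1}\cap T$ from the Jacobian $-A\cdot{\rm diag}(\ell_0(t)^{-2},\ldots,\ell_n(t)^{-2})$ of $\ell^{-1}$, and solves the tangency condition as $z=\ell^2(t)\star v$ with $v\in\ker A=L^\perp$, just as you do. Your additional closure argument, which uses the density of $L^2\cap T$ in $L^2$ and the continuity of the Hadamard product where it is defined to identify the closure of $\{\ell^2(t)\star Bu\}$ with $L^2\star L^\perp$, supplies a step the paper leaves implicit.
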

\begin{proof}
The image of the parametrization $\ell^{-1}$ from \eqref{eq:ellinv} is $L^{-1} \cap T$, which is a dense subset of the smooth points of $L^{-1}$. In particular, the points $z \in (\mathbb{P}^n)^\vee$ representing hyperplanes $z_0 x_0 + \cdots + z_n x_n = 0$ which are tangent to ${\rm im} \, \ell^{-1}$ are dense in the dual variety $\nabla(L^{-1})$.

The $(d+1) \times (n+1)$ Jacobi matrix of the map $\ell^{-1}$ is  
\[ J(t) \, = \, \Big ( \frac{\partial (\ell_j^{-1}(t))}{\partial t_i} \Big)_{i = 0, \ldots, d, j = 0, \ldots, n} \, = \, -A \cdot {\rm diag}(\ell_0(t)^{-2}, \ldots, \ell_n(t)^{-2}). \]
A point $z \in (\mathbb{P}^n)^\vee$ represents a hyperplane $z_0 x_0 + \cdots + z_n x_n = 0$ which is tangent to $L^{-1}$ at $\ell^{-1}(t)$ if and only if the vector $(z_0, \ldots, z_n)$ lies in the kernel of $J(t)$. That is, $z$ is of the form $(\ell_0(t)^2 \, v_0, \ldots, \ell_n(t)^2 \, v_n)$ for some $v \in \ker A = L^{\perp}$, concluding the proof.   
\end{proof}

Concretely, Proposition \ref{prop:reciprocalHornKapranov} provides the following parametrization for $\nabla(L^{-1})$:
\begin{equation} \label{eq:hL} h_L \, : \, \mathbb{P}^d \times \mathbb{P}^{n-d-1} \, \longrightarrow \, (\mathbb{P}^n)^\vee, \quad (t,u) \longmapsto ((t \cdot a_0)^2(b_0\cdot u) : \cdots : (t \cdot a_n)^2(b_n\cdot u)),\end{equation}
or $h_L(t,u) = \phi_2(t^\top A) \star (B \,u)$.
Here $b_0, \ldots, b_n \in \mathbb{R}^{n-d}$ are the rows of the matrix $B$. 

The \emph{coloops} of the matroid $M(L)$ are the elements $i \in [n]$ such that the $i$-th row of $B$ is zero. That is, $i$ is a \emph{loop} in the dual matroid $M(L^\perp)$. Here is a consequence of Proposition~\ref{prop:reciprocalHornKapranov}.

\begin{corollary} \label{cor:forgetloops}
Let $J \subset [n] = \{0, \ldots, n\}$ be the set of coloops of $M(L)$. The matroid discriminant variety $\nabla(L^{-1})$ is contained in the coordinate subspace $(\mathbb{P}^{n - |J|})^\vee$ given by $z_i = 0$ for $i \in J$. Writing $L_{\hat{J}} $ for the projection of $L$ to the coordinate subspace labeled by $[n] \setminus J$, we have that $\nabla(L^{-1}) = \nabla(L_{\hat J}^{-1}) \subseteq (\mathbb{P}^{n - |J|})^\vee \subseteq (\mathbb{P}^n)^\vee$.
\end{corollary}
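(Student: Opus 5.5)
The plan is to read both claims directly off the parametrization $h_L(t,u) = \phi_2(t^\top A) \star (B\,u)$ of Proposition \ref{prop:reciprocalHornKapranov}, see \eqref{eq:hL}. For the first claim, let $i \in J$ be a coloop. Then the row $b_i$ of $B$ is zero, so the $i$-th coordinate $(t\cdot a_i)^2 (b_i \cdot u)$ of $h_L(t,u)$ vanishes identically. Hence the image of $h_L$ lies in the linear subspace $\{z_i = 0 : i \in J\}$. This subspace is closed, so it also contains the closure $\nabla(L^{-1}) = L^2 \star L^\perp$.

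For the second claim, I would compare $h_L$ with the parametrization $h_{L_{\hat J}}$ of the projected space. The space $L_{\hat J}$ is the projectivized row span of $A_{\hat J}$, the submatrix of columns indexed by $[n]\setminus J$. The first check is that assumption \eqref{eq:assum_noloops} still holds for $L_{\hat J}$. It has no loops, since the columns of $A$ are nonzero. It also has dimension strictly less than $|[n]\setminus J| - 1$, unless $M(L)$ is free, which is impossible because $d<n$.

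The key step is to identify $\ker A_{\hat J}$ with the matrix $B_{\hat J}$ obtained from $B$ by deleting the zero rows indexed by $J$. Since the rows $b_i$ with $i \in J$ vanish, $A_{\hat J} B_{\hat J} = A B = 0$, and $B_{\hat J}$ still has rank $n-d$. So it remains to compare dimensions. Because coloops are exactly the elements outside every basis complement, deleting them lowers the rank: $\operatorname{rank} A_{\hat J} = d+1-|J|$. Then $\dim\ker A_{\hat J} = (n+1-|J|) - (d+1-|J|) = n-d$, so the columns of $B_{\hat J}$ form a basis of $\ker A_{\hat J}$, and hence $L_{\hat J}^\perp$ is the column span of $B_{\hat J}$.

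With that identification, $h_{L_{\hat J}}(t',u)$ has coordinates $(t'\cdot a_i')^2(b_i\cdot u)$ for $i\notin J$, where $a_i'$ are the columns of $A_{\hat J}$. I need the set of vectors $\phi_2(t^\top A_{\hat J})$ to match $\phi_2(t'^\top A_{\hat J})$ on the torus-relevant part. This holds because $t^\top A_{\hat J}$ already ranges over all of the row span $L_{\hat J}$. Comparing coordinates then shows that the image of $h_L$ equals the image of $h_{L_{\hat J}}$, embedded via $z_i=0$ for $i \in J$. Taking closures gives $\nabla(L^{-1}) = \nabla(L_{\hat J}^{-1})$.

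The main obstacle is this kernel identification, specifically the matroid fact that $\operatorname{rank} A_{\hat J} = d+1-|J|$, i.e. that removing coloops drops the rank by exactly $|J|$. I would prove it via the dual: the elements of $J$ are loops of $M(L^\perp)$, so they contribute nothing to the rank of the dual, and duality gives $\operatorname{rank}(M\setminus J) = \operatorname{rank} M - |J|$. The rest is bookkeeping with the parametrization.
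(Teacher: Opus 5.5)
Your proposal is correct and follows the paper's route. The paper records this corollary as an immediate consequence of the parametrization $h_L$ from Proposition \ref{prop:reciprocalHornKapranov}: the zero rows $b_i$ for $i\in J$ kill the $J$-coordinates, and the remaining coordinates parametrize $\nabla(L_{\hat J}^{-1})$. The step you spell out, identifying $\ker A_{\hat J}$ with the column span of $B_{\hat J}$ via ${\rm rank}\, A_{\hat J} = d+1-|J|$, is exactly what the paper leaves implicit, and you handle it correctly.
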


Some statements below assume that $M(L)$ has no coloops. This is not restrictive: by Corollary \ref{cor:forgetloops}, we may always reduce to the coloopless case, replacing $L$ with~$L_{\hat{J}}$. 

Theorem \ref{thm:mainintro} gives a simple formula for the degree of the  principal matroid determinant. Like in the case of toric $A$-discriminants, determining the degree of the matroid discriminants is more complicated. Taking cues from \cite{DickensteinFeichtnerSturmfels2007}, we use the parametrization from Proposition \ref{prop:reciprocalHornKapranov} to study the \emph{tropicalization} of $\nabla(L^{-1})$. This gives, in particular, a combinatorial algorithm to compute its degree, and we shall derive a closed formula in the case where $L$ represents the uniform matroid. We recall some basic facts and introduce more notation. 

The tropicalization of an irreducible $d$-dimensional subvariety $X^\circ \subset T$ is given by 
\[ {\rm Trop}(X^\circ) \, = \, \{w \in \mathbb{R}^{n+1}/ \mathbb{R} {\bf 1} \, : \, {\rm in}_w(I(X^\circ)) \neq \langle 1 \rangle \}. \]
Here ${\rm in}_w(I(X^\circ))$ is the initial ideal with respect to the weight vector $w$ of the defining ideal of $X^\circ$ in the Laurent polynomial ring. For a subvariety $X \subset \mathbb{P}^n$, we define ${\rm Trop}(X) = {\rm Trop}(X^\circ)$, where $X^\circ = X \cap T$. The set ${\rm Trop}(X^\circ)$ can be given a (non-unique) fan structure, such that each maximal cone has dimension $d$. After assigning weights to each $d$-dimensional cone in an appropriate way, our fan is balanced \cite[Corollary 3.5.5]{maclagansturmfels}.

The operations of taking $k$-th powers $X \mapsto \overline{\phi_k(X \cap T)} = X^k$ and taking Hadamard products $X \star Y$ of projective varieties have simple tropical analogs. 

\begin{lemma} \label{lem:tropicalize}
Let $X, Y \subset \mathbb{P}^n$ be irreducible projective varieties satisfying $\overline{X \cap T} = X$ and $\overline{Y \cap T} = Y$. Let $k$ be a positive integer. We have the following equalities of subsets of $\mathbb{R}^{n+1}/\mathbb{R} {\bf 1}$: 
\[ {\rm Trop}(X^k) \, = \, {\rm Trop}(X), \quad {\rm Trop}(X \star Y) \, = \, {\rm Trop}(X) + {\rm Trop}(Y). \]
In the second equality, `$+$' denotes the Minkowski sum in the vector space $\mathbb{R}^{n+1}/ \mathbb{R} {\bf 1}$.
\end{lemma}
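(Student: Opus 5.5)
Both equalities will follow from one principle: tropicalization is functorial for monomial maps between tori. Concretely, if $\psi: T_1 \to T_2$ is a homomorphism of tori with linear map $\Psi$ on cocharacter lattices, and $Z \subset T_1$ is irreducible, then ${\rm Trop}(\overline{\psi(Z)}) = \Psi({\rm Trop}(Z))$ as sets. This is \cite[Corollary 3.2.13]{maclagansturmfels}. I would quote that result rather than reprove it. If a direct argument is wanted, the route is through valuations of points over the Puiseux series field $K = \mathbb{C}\{\!\{\epsilon\}\!\}$. By the Fundamental Theorem of tropical geometry, ${\rm Trop}(X^\circ)$ is the closure of $\{{\rm val}(p) : p \in X^\circ(K)\}$.

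\emph{First equality.} Apply this with $\psi = \phi_k|_T : T \to T$, which is a monomial map. On $\mathbb{R}^{n+1}/\mathbb{R}{\bf 1}$ it induces $w \mapsto k w$. By hypothesis $X \cap T$ is dense in $X$. Also $\phi_k$ is finite and hence closed, so $X^k \cap T$ is the closure in $T$ of $\phi_k(X \cap T)$. It follows that ${\rm Trop}(X^k) = k \cdot {\rm Trop}(X)$. Finally, ${\rm Trop}(X)$ is a rational polyhedral fan (constant coefficients over $\mathbb{C}$), so it is closed under scaling by positive reals. Hence $k \cdot {\rm Trop}(X) = {\rm Trop}(X)$.

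\emph{Second equality.} Consider the product $X^\circ \times Y^\circ \subset T \times T$, where $X^\circ = X \cap T$ and $Y^\circ = Y \cap T$. Its tropicalization is ${\rm Trop}(X^\circ) \times {\rm Trop}(Y^\circ)$ \cite[Theorem 3.2.17 / Cor.\ 3.2.13 context]{maclagansturmfels}. Via valuations this is clear: a $K$-point of the product is a pair of $K$-points, and valuations are taken coordinatewise. Next apply the multiplication homomorphism $m: T \times T \to T$, $(x,y) \mapsto x \star y$. It induces addition $(w,w') \mapsto w + w'$ on the level of $\mathbb{R}^{n+1}/\mathbb{R}{\bf 1}$.

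It remains to identify $(X \star Y) \cap T$ with the closure in $T$ of $m(X^\circ \times Y^\circ)$. Any $(x,y) \in X \times Y$ with $x \star y \in T$ has all $x_i y_i \neq 0$, so $x \in X^\circ$ and $y \in Y^\circ$. Therefore $m(X^\circ \times Y^\circ)$ is exactly the set of points of the defining set of $X \star Y$ that lie in $T$. It is dense in $X \star Y$, since that defining set is the image of an irreducible variety and its part in the open set $T$ is nonempty. The functoriality result then gives ${\rm Trop}(X \star Y) = {\rm Trop}(X) + {\rm Trop}(Y)$. The Minkowski sum needs no extra closure: sums of rational polyhedral fans are finite unions of polyhedral cones, hence closed.

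The main obstacle is the set-theoretic image statement ${\rm Trop}(\overline{\psi(Z)}) = \Psi({\rm Trop}(Z))$. The inclusion $\supseteq$ is immediate from valuations of $K$-points. The inclusion $\subseteq$ requires lifting a point of $\psi(Z)(K)$ to $Z(K)$. This works because $K$ is algebraically closed and the image contains a dense constructible set. To close the gap, one approximates boundary points of the closure by points in that constructible set and uses that ${\rm Trop}$ is closed. For this step I would simply cite \cite[Cor.\ 3.2.13]{maclagansturmfels}.
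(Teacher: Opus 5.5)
Your proposal is correct and takes essentially the paper's route. The paper cites \cite[Corollary 3.2.13]{maclagansturmfels} for functoriality of tropicalization under monomial maps and \cite[Proposition 5.5.11]{maclagansturmfels} for Hadamard products, and your argument unpacks exactly these: for the first equality, the power map induces $w \mapsto kw$ and the constant-coefficient fan is scaling invariant; for the second, you push $X^\circ \times Y^\circ$ forward along the multiplication homomorphism $T \times T \to T$, with density checks (such as $(X \star Y) \cap T = \overline{X^\circ \star Y^\circ} \cap T$) that the paper leaves implicit.
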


\begin{proof}
These are consequences of Corollary 3.2.13 and Proposition 5.5.11 in \cite{maclagansturmfels}.
\end{proof}

\begin{proposition} \label{prop:tropicalization_as_sets}
    As a set, the tropicalization of the matroid discriminant variety is the Minkowski sum of the tropical linear spaces ${\rm Trop}(L)$ and ${\rm Trop}(L^\perp)$. In Symbols, we have ${\rm Trop}(\nabla(L^{-1})) = {\rm Trop}(L) + {\rm Trop}(L^\perp)$. In particular, the dimension of the matroid discriminant only depends on the matroid $M(L)$.
\end{proposition}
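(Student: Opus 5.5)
The plan is to combine Proposition \ref{prop:reciprocalHornKapranov} with Lemma \ref{lem:tropicalize}. By Proposition \ref{prop:reciprocalHornKapranov} we have $\nabla(L^{-1}) = L^2 \star L^\perp$. We take $X = L^2$ and $Y = L^\perp$. First we check the hypotheses of Lemma \ref{lem:tropicalize}: both varieties must be irreducible and equal to the closure of their intersection with $T$.

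Irreducibility is clear, since $L^\perp$ is a linear space and $L^2 = \phi_2(L)$ is the image of an irreducible variety. The density condition for $L^2$ holds because $L$ is irreducible and not contained in a coordinate hyperplane by \eqref{eq:assum_noloops}. Hence $L \cap T$ is dense in $L$, and so $\phi_2(L \cap T)$ is dense in $\phi_2(L) = L^2$.

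For $L^\perp$ the density condition needs $M(L)$ to have no coloops. If $i$ is a coloop, then $L^\perp \subset \{x_i = 0\}$. In that case I would first reduce via Corollary \ref{cor:forgetloops}: replace $L$ by $L_{\hat J}$ and note that $\nabla(L^{-1}) = \nabla(L_{\hat J}^{-1})$. On the tropical side, coloop coordinates $w_i$ behave as $-\infty$ in $\mathrm{Trop}(L^\perp)$, so the formula should be read in the coloop-free projection. The cleanest route is to state the reduction and assume from now on that there are no coloops.

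Under that assumption, Lemma \ref{lem:tropicalize} gives
\[ \mathrm{Trop}(\nabla(L^{-1})) = \mathrm{Trop}(L^2 \star L^\perp) = \mathrm{Trop}(L^2) + \mathrm{Trop}(L^\perp) = \mathrm{Trop}(L) + \mathrm{Trop}(L^\perp). \]
For the last equality we use $\mathrm{Trop}(X^k) = \mathrm{Trop}(X)$ with $k = 2$.

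For the final sentence of the statement, recall that $\mathrm{Trop}(L)$ is the Bergman fan of $M(L)$ and $\mathrm{Trop}(L^\perp)$ is the Bergman fan of the dual matroid. Both depend only on $M(L)$, and hence so does their Minkowski sum. Finally, the dimension of an irreducible variety equals the dimension of its tropicalization (Bieri--Groves, see \cite{maclagansturmfels}), provided the variety meets $T$. This holds because the parametrization $h_L$ takes values in $T$ for generic $(t,u)$ once there are no coloops.

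The main obstacle I expect is the handling of coloops and the torus condition. The Hadamard-product formula of Lemma \ref{lem:tropicalize} requires the factors to meet $T$ densely, and the clean reduction through Corollary \ref{cor:forgetloops} must be spelled out carefully. The rest is a direct chain of the cited results.
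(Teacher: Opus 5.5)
Your proposal is correct and follows the paper's route: the paper's proof combines Proposition \ref{prop:reciprocalHornKapranov}, Lemma \ref{lem:tropicalize}, and the fact that tropical linear spaces depend only on the matroid. Your explicit check of the density hypotheses, which the paper leaves implicit, is a welcome addition. One small simplification: with the paper's convention ${\rm Trop}(X) = {\rm Trop}(X \cap T)$, the coloop case needs no reinterpretation, since then $L^\perp \cap T$ and $\nabla(L^{-1}) \cap T$ are both empty (Corollary \ref{cor:forgetloops}), so both sides of the equality are empty, and the dimension claim then follows from your reduction to $L_{\hat J}$.
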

\begin{proof}
    This follows immediately from Proposition \ref{prop:reciprocalHornKapranov} and Lemma \ref{lem:tropicalize}. The tropical linear spaces ${\rm Trop}(L)$ and ${\rm Trop}(L^\perp)$ depend only on $M(L)$ \cite[Proposition 4.4.4]{maclagansturmfels}.
\end{proof}

\begin{proposition} \label{prop:reducetoLALB}
If $M(L)$ has no coloops, then $L^k \star (L^\perp)^l$ has the same dimension for any $k, l \in \mathbb{Z}_{>0}$. In particular, the dimension of $\nabla(L^{-1})$ equals the dimension of $L \star L^\perp$. 
\end{proposition}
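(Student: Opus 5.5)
Since every variety involved is the closure of its intersection with the torus $T$, the plan is to compute dimensions tropically. For an irreducible $X \subset \mathbb{P}^n$ with $\overline{X \cap T} = X$, the dimension of ${\rm Trop}(X)$ as a polyhedral set equals $\dim X$ by the structure theorem \cite[Theorem 3.3.5]{maclagansturmfels}. This reduces the claim to showing that the underlying sets of ${\rm Trop}(L^k \star (L^\perp)^l)$ do not depend on $k,l$.

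\textbf{Step 1: the hypotheses of Lemma~\ref{lem:tropicalize} hold.} The condition $\overline{L \cap T} = L$ holds because $M(L)$ has no loops (assumption \eqref{eq:assum_noloops}). Likewise $\overline{L^\perp \cap T} = L^\perp$ holds because $M(L^\perp)$ has no loops, which is exactly the coloop-free hypothesis.

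\textbf{Step 2: compute the tropicalization.} By definition $L^k = \phi_k(L)$, and since $\phi_k$ is finite it sends $\overline{L\cap T}$ onto $\overline{\phi_k(L\cap T)}$. So $L^k$ is irreducible and equals the closure of its torus part, and the same holds for $(L^\perp)^l$. The Hadamard product of two such varieties is again irreducible, being the closure of the image of the irreducible variety $(L^k \cap T)\times((L^\perp)^l\cap T)$ under coordinatewise multiplication, and it meets $T$ densely. Applying Lemma~\ref{lem:tropicalize} twice then gives
\[ {\rm Trop}(L^k \star (L^\perp)^l) \, = \, {\rm Trop}(L^k) + {\rm Trop}((L^\perp)^l) \, = \, {\rm Trop}(L) + {\rm Trop}(L^\perp), \]
which is independent of $k,l$. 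Hence all the varieties $L^k \star (L^\perp)^l$ have the common dimension $\dim({\rm Trop}(L)+{\rm Trop}(L^\perp))$.

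\textbf{Step 3: the discriminant.} For the second statement, Proposition~\ref{prop:reciprocalHornKapranov} gives $\nabla(L^{-1}) = L^2 \star L^\perp$, which is the case $(k,l)=(2,1)$, while $L\star L^\perp$ is the case $(1,1)$.

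The main obstacle is Step 1, namely checking the density hypotheses, since this is where the coloop-free assumption is used. If $i$ were a coloop, $L^\perp$ would lie in $\{x_i=0\}$ and the tropical argument would no longer apply. A more minor point is that the identity ${\rm Trop}(X^k)={\rm Trop}(X)$ is only stated for $k>0$; this suffices because $k,l$ are positive here.
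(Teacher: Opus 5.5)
Your proposal is correct and follows the paper's own argument. The no-loop and no-coloop hypotheses give density of $L\cap T$ and $L^\perp\cap T$. Lemma~\ref{lem:tropicalize} then gives ${\rm Trop}(L^k\star(L^\perp)^l)={\rm Trop}(L)+{\rm Trop}(L^\perp)$ for all $k,l>0$, and equal tropicalizations force equal dimensions. You also make explicit several points the paper leaves implicit: that $L^k$, $(L^\perp)^l$ and their Hadamard product are irreducible with dense torus parts, the structure theorem relating $\dim{\rm Trop}$ to $\dim$, and the use of Proposition~\ref{prop:reciprocalHornKapranov} for the ``in particular'' clause.
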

\begin{proof}
Since $M(L)$ has no loops (by our standing assumption \eqref{eq:assum_noloops}) and no coloops, we have $\overline{L \cap T} = L$ and $\overline{L^\perp\cap T} = L^\perp$. By Lemma \ref{lem:tropicalize}, we have 
\[ {\rm Trop}(L^k  \star (L^\perp)^l) \, = \, {\rm Trop}(L^k ) + {\rm Trop}((L^\perp)^l) \, = \, {\rm Trop}(L \star L^\perp).\]
If the tropicalizations are equal, then so are the dimensions. 
\end{proof}

Proposition \ref{prop:reducetoLALB} motivates us to study the variety $L \star L^\perp$ in some more detail. 

\begin{proposition} \label{prop:LAstarLB}
    For any linear space $L \subsetneq \mathbb{P}^n$, the variety $L \star L^\perp$ is contained in the hyperplane given by the coordinate sum $z_0 + z_1 + \cdots + z_n = 0$. 
\end{proposition}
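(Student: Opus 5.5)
The plan is to verify the linear equation on a dense subset of $L \star L^\perp$ and then pass to the Zariski closure. By definition, $L \star L^\perp$ is the closure of the set of points $(x_0y_0 : \cdots : x_ny_n)$ with $x \in L$, $y \in L^\perp$, and not all products $x_iy_i$ zero. The hyperplane $\{z_0 + \cdots + z_n = 0\}$ is Zariski closed. It therefore suffices to show that every such point satisfies $\sum_i x_i y_i = 0$; the closure is then automatically contained in the hyperplane.

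Take representatives $x \in \mathbb{C}^{n+1}$ of a point of $L$ and $y \in \mathbb{C}^{n+1}$ of a point of $L^\perp$. Since $L$ is the projectivized row span of $A$, we may write $x = A^\top t$ for some $t \in \mathbb{C}^{d+1}$. Since $L^\perp$ is the projectivized column span of $B$, and the columns of $B$ span $\ker A$, we may write $y = B u$ with $A y = 0$. The coordinate sum of the Hadamard product is then the bilinear pairing
\[ \sum_{i=0}^n x_i y_i \, = \, x^\top y \, = \, t^\top A \, y \, = \, t^\top (A B u) \, = \, 0. \]
This vanishing does not depend on the choice of representatives, since rescaling $x$ or $y$ rescales the sum.

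Hence every point of the dense set lies on the hyperplane $z_0 + \cdots + z_n = 0$, and so does its closure $L \star L^\perp$. No step here is a genuine obstacle. The only point requiring care is that the definition of $X \star Y$ involves a closure, which is handled by the hyperplane being closed. We also use that $L^\perp$ is literally the orthogonal complement of $L$ under the standard bilinear form, which holds by the choice of $B$ with $A B = 0$. The argument works for any proper linear subspace $L \subsetneq \mathbb{P}^n$ and needs no hypothesis on loops or coloops.
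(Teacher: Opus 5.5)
Your proof is correct and follows essentially the same approach as the paper: both write points of $L$ and $L^\perp$ as $A^\top t$ and $B u$ and observe that the coordinate sum of their Hadamard product is the bilinear form $t^\top (A B) u = 0$. Your explicit remark that the Zariski-closed hyperplane absorbs the closure in the definition of $L \star L^\perp$ is a detail the paper leaves implicit.
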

\begin{proof}
    The variety $L \star L^\perp$ is the closure of the image of 
    \begin{equation} \label{eq:auxjacobian} (t,u) \, \longmapsto \, \big ( (t \cdot a_0) (b_0 \cdot u) : \cdots : (t \cdot a_n)(b_n \cdot u) ), \end{equation}
    with $a_i$ and $b_j$ as in \eqref{eq:hL}. This is a rational map $\mathbb{P}^d \times \mathbb{P}^{n-d-1} \dashrightarrow \mathbb{P}^n$. The sum of the coordinates $z_i$ of the image is a bilinear polynomial in $t,u$. Expanding it gives $\sum_{i = 0}^n z_i = \sum_{k,l} (A \cdot B)_{k,l} \, t_k \, u_{l} \, = \, 0$. Here $(A \cdot B)_{k,l}$ is the $(k, l)$-entry of the matrix product $A \cdot B = 0$.
\end{proof}

\begin{theorem} \label{thm:maintropical}
    If $M(L)$ has no coloops and the matroid discriminant variety $\nabla(L^{-1})$ is a hypersurface, then the Newton polytope of $\Delta(L^{-1})$ is a (dilated) standard simplex. 
\end{theorem}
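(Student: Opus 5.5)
The plan is to read off the Newton polytope from the tropicalization. For an irreducible hypersurface $V(\Delta) \subset (\mathbb{P}^n)^\vee$ not containing a coordinate hyperplane, ${\rm Trop}(V(\Delta))$ is the codimension-one skeleton of the normal fan of ${\rm Newt}(\Delta)$. So it suffices to show that ${\rm Trop}(\nabla(L^{-1}))$ is, as a set, the codimension-one skeleton of the normal fan of the standard simplex. That skeleton is the tropical hyperplane: the set of $w \in \mathbb{R}^{n+1}/\mathbb{R}{\bf 1}$ where the minimum of $w_0,\dots,w_n$ is attained at least twice. A polytope whose normal fan has this support is a dilated standard simplex, since the normal fan is determined by its codimension-one skeleton and a polytope is determined up to translation and dilation by a full-dimensional complete fan with $n+1$ rays.

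The key steps are as follows. First, since $M(L)$ has no loops or coloops, Proposition \ref{prop:tropicalization_as_sets} gives ${\rm Trop}(\nabla(L^{-1})) = {\rm Trop}(L) + {\rm Trop}(L^\perp)$. By Lemma \ref{lem:tropicalize} this also equals ${\rm Trop}(L \star L^\perp)$. Second, by Proposition \ref{prop:LAstarLB}, $L\star L^\perp$ lies in the hyperplane $H = V(z_0+\cdots+z_n)$, so ${\rm Trop}(L\star L^\perp) \subseteq {\rm Trop}(H)$, and ${\rm Trop}(H)$ is exactly the tropical hyperplane described above. Third, the hypothesis that $\nabla(L^{-1})$ is a hypersurface gives $\dim {\rm Trop}(\nabla(L^{-1})) = n-1 = \dim {\rm Trop}(H)$. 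Equivalently, by Proposition \ref{prop:reducetoLALB}, $L\star L^\perp$ is a hypersurface, and since it is irreducible (the closure of the image of an irreducible variety) and contained in the irreducible hypersurface $H$, it equals $H$. Hence ${\rm Trop}(\nabla(L^{-1})) = {\rm Trop}(H)$ as sets.

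It remains to conclude about ${\rm Newt}(\Delta(L^{-1}))$. Because $\Delta(L^{-1})$ is irreducible and, by the no-coloop assumption and Proposition \ref{prop:reciprocalHornKapranov}, $\nabla(L^{-1})$ meets the torus, its tropical hypersurface is the codimension-one skeleton of the normal fan of its Newton polytope $P$. The support of that skeleton coincides with the skeleton of the normal fan of the simplex $\Delta_n$. The maximal open regions of its complement are therefore the $n+1$ regions where a single $w_i$ is the strict minimum, so $P$ has exactly $n+1$ vertices $v_0,\dots,v_n$, one minimizing each region. The edges of $P$ correspond to the walls $\{w_i = w_j < \text{others}\}$, with edge $v_i - v_j$ perpendicular to that wall and hence parallel to $e_i - e_j$. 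Every pair $i,j$ gives an edge, so the vertices form a complete graph with edges $v_i - v_j = \lambda_{ij}(e_i - e_j)$. Consistency around triangles forces all $\lambda_{ij}$ to equal a common $\lambda$, since $v_i - v_k$ is a multiple of $e_i - e_k$. Homogeneity of $\Delta$ then fixes the translation, so $P = \lambda \Delta_n$ with $\lambda = \deg \Delta(L^{-1})$.

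The main obstacle I expect is the third step, making the dimension comparison rigorous. Proposition \ref{prop:reducetoLALB} compares dimensions of $L^2\star L^\perp$ and $L\star L^\perp$, but one must also know that $\nabla(L^{-1}) = L^2\star L^\perp$ has dimension exactly that of its tropicalization. This holds by the Bieri--Groves theorem, because the variety meets the torus. Once the set-theoretic equality of tropicalizations is established, the rest is standard.
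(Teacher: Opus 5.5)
Your proposal is correct and follows essentially the same route as the paper. The no-loop/no-coloop assumption gives ${\rm Trop}(\nabla(L^{-1})) = {\rm Trop}(L\star L^\perp)$ as sets, Proposition \ref{prop:LAstarLB} plus the hypersurface hypothesis forces $L\star L^\perp = V(z_0+\cdots+z_n)$, and the Newton polytope is then read off from the tropical hyperplane; your vertex/edge argument just spells out the last step, which the paper leaves implicit.

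One small correction: homogeneity alone does not fix the translation in $P = \lambda\Delta_n + c$, since it only gives $\lambda + \sum_j c_j = \deg\Delta(L^{-1})$. What forces $c = 0$ is that $\Delta(L^{-1})$ is irreducible and not a monomial, hence not divisible by any $z_j$, so $\min_{P} x_j = c_j$ must vanish for every $j$.
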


\begin{proof}
    If $\nabla(L^{-1})$ is a hypersurface, then so is $L \star L^\perp$ by Proposition \ref{prop:reducetoLALB}. Moreover, by Proposition \ref{prop:LAstarLB}, $L \star L^\perp$ is the hyperplane given by $z_0 + z_1 + \cdots + z_n = 0$. In particular, its tropicalization ${\rm Trop}(L \star L^\perp)$ is the normal fan of the standard simplex. Indeed, the tropicalization of a hypersurface in $T$ equals the $(n-1)$-skeleton of the normal fan of its Newton polytope \cite[Proposition 3.1.10]{maclagansturmfels}. By Propositions \ref{prop:reciprocalHornKapranov} and \ref{prop:reducetoLALB}, we have ${\rm Trop}(L \star L^\perp) = {\rm Trop}(\nabla(L^{-1}))$ (as sets, not as weighted polyhedral fans), which proves that the Newton polytope of $\Delta(L^{-1})$ is a dilation of the standard simplex ${\rm Conv}(e_0, \ldots, e_n) \subset \mathbb{R}^{n+1}$.
\end{proof}

\begin{theorem} \label{thm:newtEL}
    Define the subset ${\cal B} = \{ F \in {\cal F}(M(L)) \, : \, \text{$\nabla(L_F^{-1})$ is a hypersurface }\}$.
    The Newton polytope of the principal matroid determinant is the generalized permutohedron
    \begin{equation} \label{eq:newtEL} {\rm Newt}(E_L) \, = \, \sum_{F \in {\cal B}} m_F \, \deg(\Delta(L_F^{-1})) \,  \Delta_F,\end{equation}
    where $\Delta_F$ is the standard simplex ${\rm Conv}(e_i \,: \, i \in F).$
\end{theorem}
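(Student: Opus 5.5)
The plan is to combine the factorization of Corollary \ref{cor:factorization} with the simplex description of the individual factors from Theorem \ref{thm:maintropical}, using that the Newton polytope of a product is the Minkowski sum of the Newton polytopes.

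First, by Corollary \ref{cor:factorization} we can write $E_L = \prod_{F} \Delta(L^{-1}\cap\Lambda_F)^{m_F}$. By Proposition \ref{prop:stratify}, $L^{-1}\cap\Lambda_F = L_F^{-1}$, so each factor is $\Delta(L_F^{-1})$. The factors with $F\notin{\cal B}$ are equal to $1$ by convention and drop out. Since ${\rm Newt}(fg) = {\rm Newt}(f)+{\rm Newt}(g)$, we obtain ${\rm Newt}(E_L) = \sum_{F\in{\cal B}} m_F\,{\rm Newt}(\Delta(L_F^{-1}))$. It therefore suffices to show that, for each $F\in{\cal B}$, ${\rm Newt}(\Delta(L_F^{-1})) = \deg(\Delta(L_F^{-1}))\,\Delta_F$.

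Second, we fix $F\in{\cal B}$. Here $L_F$ is the row span of $A_F$, a linear space in $\Lambda_F\simeq\mathbb{P}^{|F|-1}$. It has no loops, since $M(L)$ has none, and $\Delta(L_F^{-1})$ is a polynomial in the variables $z_i$, $i\in F$.
\begin{itemize}
\item If $|F|=1$, the factor is $z_i$, whose Newton polytope is $\Delta_{\{i\}}$, so the claim holds.
\item If $M(L_F)$ has coloops $J\subset F$, Corollary \ref{cor:forgetloops} shows that $\nabla(L_F^{-1})$ lies in the subspace $\{z_i=0,\ i\in J\}$. Because it is a hypersurface, it must then be that hyperplane. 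This forces $J=\{i\}$ and $\nabla(L_F^{-1})=\{z_i=0\}$, so $\Delta(L_F^{-1})=z_i$. This case needs care: one has to check whether it actually occurs (for example, when $L_F$ is all of $\Lambda_F$, $L_F^{-1}=\Lambda_F$ has a point as its dual, and dimension counting excludes this unless $|F|=1$). Even if it does occur, its Newton polytope is a point. That is not of the form $\deg\cdot\Delta_F$ for $|F|>1$, so I would verify via Proposition \ref{prop:tropicalization_as_sets} that coloops combined with the hypersurface condition force $|F|=1$.
\item If $M(L_F)$ is coloopless with $|F|\ge 2$, Theorem \ref{thm:maintropical} applied to $L_F$ gives ${\rm Newt}(\Delta(L_F^{-1})) = c\,\Delta_F$ for some $c\ge 0$.
\end{itemize}
Since $\Delta(L_F^{-1})$ is homogeneous, every vertex of $c\,\Delta_F$ has coordinate sum equal to its degree, which gives $c=\deg\Delta(L_F^{-1})$.

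Finally, a Minkowski sum of dilated simplices $\Delta_F$ with nonnegative coefficients is a generalized permutohedron by Postnikov's characterization, which yields \eqref{eq:newtEL}. The main obstacle I expect is the coloop case in the second step: ruling out rigorously, for instance by a dimension count on ${\rm Trop}(L_F)+{\rm Trop}(L_F^\perp)$ after deleting coloops, that a flat with $|F|\ge2$ whose restriction has coloops can give a hypersurface discriminant that is not accounted for by the simplex formula.
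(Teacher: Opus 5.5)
Your route is the paper's. Its proof is exactly Corollary~\ref{cor:factorization} combined with Theorem~\ref{thm:maintropical}, additivity of Newton polytopes under products, and Postnikov's characterization. Your homogeneity argument identifying the dilation factor with $\deg\Delta(L_F^{-1})$ is a step the paper leaves implicit.

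The coloop issue you flag is legitimate: the paper applies Theorem~\ref{thm:maintropical} to every $L_F$ without checking its coloopless hypothesis. You leave it open, but it closes by a one-line dimension count, with no tropical computation needed.

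\begin{itemize}
\item Let $|F|\ge 2$ and let $J\neq\emptyset$ be the set of coloops of $M(L_F)$.
\item By Corollary~\ref{cor:forgetloops}, $\nabla(L_F^{-1})=\nabla(L_{F,\hat J}^{-1})$ is the projective dual of a variety in a projective space of dimension $m=|F|-|J|-1$.
\item A projective dual in $\mathbb{P}^m$ has dimension at most $m-1$, because the conormal variety has dimension $m-1$.
\item For $m\le 0$ the dual is empty. This includes the case $J=F$, where $L_F^{-1}=\Lambda_F$. In particular the dual of $\Lambda_F$ is empty, not a point as you wrote.
\item Hence $\nabla(L_F^{-1})$ is either empty or has codimension at least $|J|+1\ge 2$ in $(\mathbb{P}^{|F|-1})^\vee$, so $F\notin{\cal B}$.
\end{itemize}

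So the scenario $\nabla(L_F^{-1})=\{z_i=0\}$ in your second bullet never occurs, since it would force the dual of $L_{F,\hat J}^{-1}$ to fill its entire dual space. The only flats with coloops that lie in ${\cal B}$ are singletons, which the convention $\Delta=z_i$ handles. Every other $F\in{\cal B}$ falls under your third bullet, which completes your argument.
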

\begin{proof}
    The formula \eqref{eq:newtEL} follows directly from Corollary \ref{cor:factorization} and Theorem \ref{thm:maintropical}. This polytope is a generalized permutohedron by the characterization in \cite[Proposition 6.3]{Postnikov2009Permutohedra}.
\end{proof}

We now describe how to use Proposition \ref{prop:reciprocalHornKapranov} to compute ${\rm Trop}(\nabla(L^{-1}))$ as a balanced weighted fan. 
First, we determine the tropicalization of $L^2$ by using \cite[Theorem 3.12]{SturmfelsTevelev2008}.
We compute a simplicial fan $\Sigma_L$ whose support is the tropicalization of $L$. Since ${\rm Trop}(L) = {\rm Trop}(L^2)$ as sets, the support of $\Sigma_L$ also equals the tropicalization of $L^2$. Let $\Sigma_L(d)$ be the set of $d$-dimensional cones of $\Sigma_L$. In ${\rm Trop}(L)$, each $\sigma \in \Sigma_L(d)$ is assigned weight one. What matters for us is the correct assignment of weights for ${\rm Trop}(L^2)$.

\begin{lemma} \label{lem:weightsLA2}
    The tropicalization of $L^2$ equals the support of the fan $\Sigma_L$. The weight of each cone in $\Sigma_L(d)$ is $2^{d-c(L)+1}$, where $c(L)$ is the number of connected components of $M(L)$. 
\end{lemma}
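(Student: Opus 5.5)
The plan is to obtain ${\rm Trop}(L^2)$ as the pushforward of ${\rm Trop}(L)$ under a finite monomial map, using the tropical pushforward formula (Sturmfels--Tevelev \cite[Theorem 3.12]{SturmfelsTevelev2008}). The support statement is immediate from Lemma \ref{lem:tropicalize}: ${\rm Trop}(L^2) = {\rm Trop}(L)$ as sets, which is the support of $\Sigma_L$. The work lies in computing the multiplicities.

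\emph{Step 1: set up the map.} Let $\phi_2$ restrict to the torus $T$ as the homomorphism $x \mapsto x^2$. On the cocharacter lattice $N = \mathbb{Z}^{n+1}/\mathbb{Z}{\bf 1}$ it induces $w \mapsto 2w$. Its restriction to $L\cap T$ is generically finite onto $L^2\cap T$.

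\emph{Step 2: apply the pushforward formula.} Sturmfels--Tevelev gives, for a cone $\sigma \in \Sigma_L(d)$ with weight $1$ in ${\rm Trop}(L)$, the induced weight on $2\sigma = \sigma$ in ${\rm Trop}(L^2)$:
\[ m_{L^2}(\sigma) \, = \, \frac{1}{\delta}\,[\,N_{\sigma} : 2N_{\sigma}\,] \, = \, \frac{2^d}{\delta}. \]
Here $N_\sigma$ is the rank-$d$ lattice $\mathbb{Z}\sigma\cap N$, and $\delta$ is the degree of $\phi_2|_{L\cap T}: L\cap T \to L^2\cap T$. If $\sigma$ lies in the relative interior of a cone with several preimages, the contributions are summed; here the map on $N_\mathbb{R}$ is injective, so each cone has one preimage.

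\emph{Step 3: compute $\delta$.} This is the main step. The fiber of $\phi_2$ over $x^2$ for a generic $x \in L\cap T$ consists of the points $g\cdot x$ with $g \in \mathcal{G}_2$ and $g\cdot x \in L$. Since $x$ is generic, this condition is equivalent to $g\cdot L = L$, that is, $g \in {\rm Stab}_2(L)$.

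One might worry that $g\cdot x\in L$ for a single generic $x$ does not force $g\cdot L = L$. The finitely many $g$ with $g\cdot L \neq L$ each satisfy $(g\cdot L)\cap L \subsetneq L$, which is a proper closed subset, so avoiding these subsets for all such $g$ settles the worry. Moreover, ${\rm Fix}$ is trivial because $L$ is not contained in a coordinate hyperplane.

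It remains to count the stabilizer. As in the proof of Lemma \ref{lem:degLk}, citing \cite[Theorem 2.6]{DeyGorlachKaihnsa2020}, a diagonal sign matrix preserves $L$ exactly when it is constant on each connected component of $M(L)$, up to global scaling. Hence $|{\rm Stab}_2(L)| = 2^{c(L)-1}$, so $\delta = 2^{c(L)-1}$.

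\emph{Step 4: conclude and check.} Combining the steps gives weight $2^d/2^{c(L)-1} = 2^{d-c(L)+1}$ on every cone of $\Sigma_L(d)$. As a consistency check, the degree of $L^2$ computed tropically, by intersecting with a generic tropical hyperplane of complementary dimension, is $2^{d-c(L)+1}\deg L = 2^{d-c(L)+1}$. This matches Lemma \ref{lem:degLk}.
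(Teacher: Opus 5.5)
Your proof is correct and follows the same route as the paper, which derives the lemma from the Sturmfels--Tevelev pushforward formula \cite[Theorem 3.12]{SturmfelsTevelev2008} together with Lemma~\ref{lem:degLk}. The only difference is bookkeeping. The paper can read off the constant weight $2^d/\delta$ from $\deg L^2 = 2^{d-c(L)+1}$, whereas you compute $\delta = |{\rm Stab}_2(L)| = 2^{c(L)-1}$ directly from the sign-stabilizer count, which is the same count from \cite{DeyGorlachKaihnsa2020} that underlies Lemma~\ref{lem:degLk}, and you use the degree only as a check.
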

\begin{proof}
    This is a direct consequence of \cite[Theorem 3.12]{SturmfelsTevelev2008} and Lemma \ref{lem:degLk}.
\end{proof}

The tropicalization of $L^\perp$ is the support of a fan $\Sigma_{L^\perp}$ in $\mathbb{R}^{n+1}/\mathbb{R} \mathbf{1}$, and each $(n-d-1)$-dimensional cone in $\Sigma_{L^\perp}(n-d-1)$ is assigned weight one. 

We can now use this information, i.e., the weighted fans $\Sigma_L$ and  $\Sigma_{L^\perp}$, to compute the tropicalization of the Hadamard product $L^2 \star {L^\perp} = \nabla(L^{-1})$. As a set, this was determined in Proposition \ref{prop:tropicalization_as_sets}: ${\rm Trop}(L^2 \star {L^\perp}) = {\rm Trop}(L) + {\rm Trop}({L^\perp})$. It remains to assign the correct weights to the cones of ${\rm Trop}(L^2 \star {L^\perp})$, for which we will use \cite[Theorem 3.12]{SturmfelsTevelev2008}. The rest of the discussion applies in the case where $\nabla(L^{-1})$ is a hypersurface. 

We interpret $L^2 \star {L^\perp}$ as the image of $L^2 \times {L^\perp}$ under the monomial map $(x,y) \mapsto x \star y$.  Let $\Sigma$ be a fan whose support is ${\rm Trop}(L^2 \star {L^\perp})$, such that for each $\sigma_{L} \in \Sigma_L(d)$ and $\sigma_{L^\perp} \in \Sigma_{L^\perp}(n-d-1)$, the Minkowski sum $\sigma_{L} + \sigma_{L^\perp}$ is a union of cones in $\Sigma$. Let $\delta$ be the degree of the map $L^2 \times {L^\perp} \rightarrow L^2 \star {L^\perp}$. By \cite[Theorem 3.12]{SturmfelsTevelev2008}, the weight of $\sigma \in \Sigma(n-1)$ is 
\begin{equation} \label{eq:weights} {\rm weight}(\sigma) \, = \, \frac{1}{\delta}\sum_{\sigma_{L} + \sigma_{L^\perp} \supseteq \sigma} {\rm weight}(\sigma_{L}) \, {\rm weight}(\sigma_{L^\perp}) \, [N_\sigma : N_{\sigma_{L}} + N_{\sigma_{L^\perp}}]. \end{equation}
The sum is over all tuples $(\sigma_{L}, \sigma_{L^\perp})\in \Sigma_L(d) \times \Sigma_{L^\perp}(n-d-1)$ such that $\sigma_{L} + \sigma_{L^\perp} \supseteq \sigma$. Let $N_C$ be the sublattice of $\mathbb{Z}^{n+1}/\mathbb{Z} {\bf 1}$ generated by the lattice points of $C$. The integer $[N_\sigma : N_{\sigma_{L}} + N_{\sigma_{L^\perp}}]$ is the lattice index of $N_{\sigma_{L}} + N_{\sigma_{L^\perp}}$ inside $N_\sigma$. The weight of $\sigma_{L^\perp}$ is one, and by Lemma \ref{lem:weightsLA2} we have ${\rm weight}(\sigma_{L}) = 2^{d-c(L)+1}$. 
The following lemma determines the degree $\delta$ of the Hadamard product map $L^2 \times {L^\perp} \rightarrow L^2 \star {L^\perp}$ when $\nabla(L^\perp)$ is a hypersurface.
\begin{lemma} \label{lem:delta}
    If $\nabla(L^{-1})$ is a hypersurface, then the map $L^2 \times {L^\perp} \rightarrow L^2 \star {L^\perp}$ which sends $(x,y)$ to $x \star y$ is birational. That is, the number $\delta$ in \eqref{eq:weights} is one. 
\end{lemma}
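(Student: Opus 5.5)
Since $\nabla(L^{-1})$ is a hypersurface, $\dim(L^2 \star L^\perp) = n-1 = d + (n-d-1) = \dim(L^2 \times L^\perp)$. The Hadamard map is therefore generically finite, and we must show that its generic fiber is a single point. Work on the dense open part $(L^2\cap T)\times(L^\perp\cap T)$, using the parametrization $h_L(t,u) = \phi_2(t^\top A)\star (Bu)$ from \eqref{eq:hL}. The fiber over a generic $z = h_L(t,u)$ consists of the pairs $(x,y)$ with $x\in L^2\cap T$, $y \in L^\perp \cap T$ and $x\star y = z$. Writing $x = \phi_2(\ell(t'))$, with $\ell(t') = t'^\top A$, we need $y = z \star \phi_{-2}(\ell(t'))$ to lie in $L^\perp$. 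So the plan is to show that, for generic $(t,u)$, the only $x \in L^2\cap T$ with $z\star x^{-1}\in L^\perp$ is $x = \phi_2(\ell(t))$.

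We translate this into the geometry of Proposition \ref{prop:reciprocalHornKapranov}. The condition $z \star x^{-1} \in L^\perp = \ker A$ says $A\,{\rm diag}(\ell_i(t')^{-2})\,z = 0$. Equivalently, the hyperplane $H_z$ is tangent to $L^{-1}$ at the point $\ell^{-1}(t')$, as in the proof of Proposition \ref{prop:reciprocalHornKapranov}. In the notation of Lemma \ref{lem:recAdisc_geom}, the fiber is in bijection with the points $x\in L^{-2}\cap T$ such that $(x,z)\in Y^\circ$. Passing to $L^{-1}$, this is the set of points $p \in L^{-1}\cap T$ with $H_z$ tangent at $p$, modulo the fibers of $\phi_2$ restricted to $L^{-1}\cap T$.

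First, $\phi_2$ is generically injective on $L^{-1}\cap T$ up to ${\rm Stab}_2$, and we need $\phi_2 : L\cap T\to L^2\cap T$ to have degree one onto its image. Lemma \ref{lem:degLk} only gives $\deg L^2 = 2^{d-c(L)+1}$. A cleaner argument: the sign changes $g\in\mathcal G_2$ preserving $L$ act on the pairs $(x,y)$, so we need the product map, not $\phi_2$ alone, to be injective. Since $y = z\star x^{-1}$ is determined by $x$, it suffices to show that $x\in L^2$ is unique. Second, we invoke reflexivity (biduality) in characteristic zero. Because $\nabla(L^{-1})$ is a hypersurface, a generic tangent hyperplane $H_z$ is tangent to $L^{-1}$ at a single point, since the contact locus of a generic point of the dual hypersurface is a single point (a linear space of dimension $n-1-\dim\nabla = 0$). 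Hence $p = \ell^{-1}(t)$ is unique, and $x = \phi_2(p)$ is unique in $L^{-2}$. Transporting back via $\phi_{-1}$, which is an isomorphism between $L^{-2}\cap T$ and $L^2\cap T$ (coordinatewise inversion), gives uniqueness of $x\in L^2\cap T$, and then $y$ is forced. So the generic fiber is one point and $\delta=1$.

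The main obstacle is being careful with the identification between fibers of the Hadamard map and tangency points. The parametrization via $\ell(t')$ versus $\phi_2$ introduces the $\pm$ ambiguity, but this ambiguity disappears once we phrase everything in terms of the point $x\in L^2$ itself, rather than its square roots. The reflexivity step (contact locus a point for a dual hypersurface) is standard \cite[Chapter 1]{GKZbook} and carries the weight of the argument. We must also check that the generic fiber point lies in the torus, which holds because the generic points of both factors lie in $T$ under \eqref{eq:assum_noloops} and the no-coloop assumption.
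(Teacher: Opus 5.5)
Your proof is correct and follows the paper's argument. The fiber of the Hadamard map over a generic $z$ corresponds to the points of $L^{-1}\cap T$ at which $H_z$ is tangent. The uniqueness of the contact point for a generic point of a dual hypersurface \cite[Chapter 1, Proposition 3.2(b)]{GKZbook} then leaves exactly one preimage, namely $(\phi_{-2}(p), z\star\phi_2(p))$ for the tangency point $p$. You spell out the fiber/tangency bijection that the paper compresses into ``by construction'', and your detour through ${\rm Stab}_2$ can simply be deleted.

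The no-coloop condition you invoke does not need to be assumed. By Corollary \ref{cor:forgetloops}, if the set of coloops $J$ is nonempty, then $\nabla(L^{-1})=\nabla(L_{\hat J}^{-1})$ is a dual variety inside $(\mathbb{P}^{n-|J|})^\vee$. It therefore has codimension at least $|J|+1\geq 2$ in $(\mathbb{P}^n)^\vee$. So the hypersurface hypothesis already excludes coloops, which guarantees $L^\perp\cap T\neq\emptyset$.
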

\begin{proof}
    Let $z \in \nabla(L^{-1})$ be a generic point in the image of the Hadamard product map $L^2 \times {L^\perp} \rightarrow L^2 \star {L^\perp}$. By \cite[Chapter 1, Proposition 3.2(b)]{GKZbook}, there is a unique point $x \in L^{-1}$ such that the hyperplane $\{x \in \mathbb{P}^{n-1} \, : \, z_0 x_0 + \cdots + z_n x_n = 0\}$ is tangent to $L^{-1}$ at $x$. Let $(y_0: \cdots: y_n) = (z_0x_0^{-2}: \cdots : z_n x_n^{-2})$. By construction $(\phi_2(x),y)$ is the unique pre-image of $z = \phi_2(x) \star y$ under the Hadamard product map. 
\end{proof}

If $\nabla(L^{-1})$ is a hypersurface, i.e., if $\Sigma$ has dimension $n-1$, then the Newton polytope of $\Delta(L^{-1})$ is a simplex by Theorem \ref{thm:maintropical}. Its vertices are $w \cdot e_i$, $i \in [n]$, where $e_i$ is the $i$-th standard basis vector in $\mathbb{R}^{n+1}$, and $w = {\rm weight}(\sigma)$ is the weight of any $\sigma \in \Sigma(n-1)$. 

 If $M(L)$ is uniform, we can determine a closed formula for the degree of $\Delta(L^{-1})$ by keeping track of the weights of the cones of $\Sigma$ in the computation explained above.

\begin{theorem} \label{thm:uniform}
    If $M(L)$ is the uniform matroid of rank $d+1$ on $n+1$ elements, then the matroid discriminant variety $\nabla(L^{-1}) \subset \mathbb{P}^n$ is a hypersurface of degree $2^d \binom{n-1}{d}$. 
\end{theorem}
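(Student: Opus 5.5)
The plan is to prove both claims (hypersurface, and the degree) tropically, using the machinery set up just before the statement. Let $M(L)=U_{d+1,n+1}$ with $0\le d<n$. Then $M(L)$ has no loops. It has no coloops as long as $d<n$, and it is connected, so $c(L)=1$. By Proposition \ref{prop:reciprocalHornKapranov} and Lemma \ref{lem:delta}, ${\rm Trop}(\nabla(L^{-1}))$ is the pushforward of ${\rm Trop}(L^2)\times{\rm Trop}(L^\perp)$ under addition, with weights given by \eqref{eq:weights} and $\delta=1$. By Lemma \ref{lem:weightsLA2}, every maximal cone of $\Sigma_L$ carries weight $2^{d}$. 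Once $\nabla(L^{-1})$ is known to be a hypersurface, Theorem \ref{thm:maintropical} says its Newton polytope is $w\cdot{\rm Conv}(e_0,\dots,e_n)$. Hence $\deg\Delta(L^{-1})=w$, where $w$ is the weight of any maximal cone of the tropical hypersurface, and it suffices to compute one such weight.

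For the fans I use the fine subdivisions of the Bergman fans. The maximal cones of $\Sigma_L$ are
\[
\sigma_L={\rm cone}(e_{F_1},\dots,e_{F_d}),
\]
indexed by chains $F_1\subset\cdots\subset F_d$ with $|F_i|=i$, where $e_F=\sum_{i\in F}e_i$. Since $L^\perp$ represents $U_{n-d,n+1}$, the maximal cones of $\Sigma_{L^\perp}$ are indexed by chains $G_1\subset\cdots\subset G_{n-d-1}$ with $|G_i|=i$. Fix the maximal cone $\sigma$ of the normal fan of the simplex obtained by omitting the two indices $j\neq k$. A generic point of $\sigma$ is $p=\sum_{i\ne j,k}\lambda_i e_i$ with distinct $\lambda_i>0$; relabel so that $\lambda$ is decreasing. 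Because $\sigma$ is the maximal cone itself, the right notion of containment is that $\sigma_L+\sigma_{L^\perp}$ contains a neighbourhood of $p$ in $\sigma$.

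The key step is to enumerate the pairs $(\sigma_L,\sigma_{L^\perp})$ with this property. Such a sum has dimension $n-1$ exactly when the $n-1$ vectors $e_{F_a},e_{G_b}$ are independent. Writing $p=\sum_a\alpha_a e_{F_a}+\sum_b\beta_b e_{G_b}$ with $\alpha,\beta\ge0$, and comparing the staircase structure of the two chains with the ordering of the $\lambda_i$, I expect to show the following.
\begin{itemize}
\item Both chains must avoid $\{j,k\}$ and consist of initial segments of the order $\lambda$.
\item The pairs that work correspond bijectively to choosing which $d$ of the $n-1$ "steps" $\lambda_{(m)}-\lambda_{(m+1)}$ are absorbed by the $L$-chain, the remaining ones being absorbed by the $L^\perp$-chain.
\end{itemize}
This gives $\binom{n-1}{d}$ pairs. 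For each such pair the vectors $e_{F_a},e_{G_b}$ form, up to unimodular change, the vectors $e_{\{1,\dots,m\}}$ for $m=1,\dots,n-1$, which generate the whole lattice $N_\sigma$. So every lattice index $[N_\sigma:N_{\sigma_L}+N_{\sigma_{L^\perp}}]$ equals $1$. The existence of at least one pair with full-dimensional sum already shows that ${\rm Trop}(L)+{\rm Trop}(L^\perp)$ has dimension $n-1$. By Proposition \ref{prop:tropicalization_as_sets}, $\nabla(L^{-1})$ is then a hypersurface. (Alternatively, this is the case of Proposition \ref{prop:whenhypersurface}.)

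Plugging into \eqref{eq:weights} gives $w=2^{d}\binom{n-1}{d}$, which is the claimed degree. As a sanity check, $d=2$, $n=3$ gives $4$, matching Example \ref{ex:intro}.

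I expect the main obstacle to be the enumeration step. It requires showing that no other pairs contribute, for instance chains containing $j$ or $k$, or chains not compatible with the order of $\lambda$; such a pair might still produce a sum containing $p$ through some cancellation. It also requires checking unimodularity carefully in the quotient lattice $\mathbb{Z}^{n+1}/\mathbb{Z}\mathbf 1$. If the fine-fan count becomes messy, my fallback is to compute the degree instead as the number of intersection points of $\nabla(L^{-1})$ with a generic line. Via the parametrization \eqref{eq:hL}, this becomes a tropical stable intersection count of the same combinatorial nature.
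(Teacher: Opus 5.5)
Your route is the paper's: constant weight $2^d$ from Lemma \ref{lem:weightsLA2}, $\delta=1$ from Lemma \ref{lem:delta}, the pushforward formula \eqref{eq:weights}, and degree equal to that weight via the simplex Newton polytope. The gap is the enumeration step, which you defer but which is the whole computation, and your predicted description of it is wrong.

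Take $1\le d\le n-2$. If both chains were initial segments of the global $\lambda$-order, then $F_1=G_1$, so $e_{F_1}=e_{G_1}$ and $\sigma_L+\sigma_{L^\perp}$ has dimension $<n-1$. Read literally, your first bullet yields no contributing pairs at all. The ``steps'' bijection also fails. The $L$-chain must consist of sets of sizes $1,\dots,d$, and its coefficients $\alpha_a$ are gaps between consecutive $\lambda$-values inside $F_d$, not global steps. For example, if $d=1$ and $F_1=\{x\}$, the coefficient is $\lambda_x$ itself. Your check $d=2$, $n=3$ cannot detect this, since $L^\perp$ is a point there and one chain is empty.

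The correct enumeration is short, and it also disposes of your cancellation worry. Each fine cone ${\rm cone}(e_{F_1},\dots,e_{F_d})$ lies in ${\rm pos}(e_i : i\in F_d)$, and similarly for $L^\perp$. Hence
\[
\sigma_L+\sigma_{L^\perp}\subseteq{\rm pos}(e_i : i\in F_d\cup G_{n-d-1}),
\]
a cone of the complete simplicial fan in $\mathbb{R}^{n+1}/\mathbb{R}{\bf 1}$ with rays $e_0,\dots,e_n$. Since $|F_d\cup G_{n-d-1}|\le n-1$, this cone contains a relative-interior point of $\sigma$ only if $F_d\sqcup G_{n-d-1}=[n]\setminus\{j,k\}$.

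For each such partition, exactly one pair of chambers contains $p$. The $L$-chain consists of initial segments of the $\lambda$-order restricted to $F_d$, and the $L^\perp$-chain of initial segments of the $\lambda$-order restricted to $G_{n-d-1}$. So the contributing pairs are indexed by $d$-subsets of the $n-1$ indices, which gives $\binom{n-1}{d}$ of them. Each has lattice index $1$, because the two flags together form a basis of $\mathbb{Z}^{[n]\setminus\{j,k\}}$.

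The paper avoids this bookkeeping by using the coarse fan structure. Its maximal cones are ${\rm pos}(e_j : j\in J)$ with $|J|=d$, and ${\rm pos}(e_k : k\in K)$ with $|K|=n-d-1$. Their sum is ${\rm pos}(e_l : l\in J\cup K)$, which equals $\sigma$ exactly when $J\sqcup K=[n]\setminus\{j,k\}$. The count $\binom{n-1}{d}$ and the unit indices are then immediate, with no refinement of $\Sigma$ needed.
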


\begin{proof}
For $S \subset \mathbb{R}^{n+1}/\mathbb{R} {\bf 1}$, let ${\rm pos}(S)$ be the cone generated by $S$. The Bergman fan structure on ${\rm Trop}(L)$ \cite[Example 4.2.13]{maclagansturmfels} is the fan $\Sigma_L$ whose $d$-dimensional cones are 
\[ \sigma_{L,J} \, = \, {\rm pos}(e_j \, : \, j \in J) \, \subset \, \mathbb{R}^{n+1}/\mathbb{R} {\bf 1} \quad \text{where} \quad J \subset [n], \, |J| = d.\]
Similarly, we define $\Sigma_{L^\perp}$ to be the $(n-d-1)$-dimensional fan whose maximal cones are 
\[ \sigma_{L^\perp,K} \, = \, {\rm pos}(e_k \, : \, k \in K) \, \subset \, \mathbb{R}^{n+1}/\mathbb{R} {\bf 1} \quad \text{where} \quad K \subset [n], \, |K| = n-d-1.\]
The Minkowski sum of the supports of $\Sigma_L$ and $\Sigma_{L^\perp}$ forms the $(n-1)$-skeleton of the normal fan of the standard simplex, i.e., the $(n-1)$-dimensional fan $\Sigma$ whose maximal cones are
\[ \sigma_{L} \, = \, {\rm pos}(e_l \, : \, l \in L) \, \subset \, \mathbb{R}^{n+1}/\mathbb{R} {\bf 1} \quad \text{where} \quad L \subset [n], \, |L| = n-1.\]
By Proposition \ref{prop:tropicalization_as_sets}, this implies that $\nabla(L^{-1})$ is a hypersurface. 
Since the uniform matroid is connected, we obtain by applying Lemma \ref{lem:weightsLA2}, Lemma \ref{lem:delta} and Equation \eqref{eq:weights} that
\[ {\rm weight}(\sigma_L) \, = \, \sum_{J \cup K = L} {\rm weight}(\sigma_{L,J}) \,  {\rm weight}(\sigma_{L^\perp,K}) \, = \, 2^d \, \binom{n-1}{d}. \]
We now apply \cite[Corollary 3.6.16]{maclagansturmfels} to conclude that $\deg(\nabla(L^{-1})) = 2^d \binom{n-1}{d}$.
\end{proof}
We note that a similar strategy for tropicalization appears in \cite[Remark 5.3 and Theorem 6.8]{BocciCarliniKileel2016}, where the authors compute the tropicalization of a Hadamard product of linear spaces.

We now comment on the dimension of the matroid discriminant variety. When $\nabla(L^{-1})$ has codimension greater than one, the reciprocal linear space $L^{-1}$ is called \emph{dual defective}. Using Corollary \ref{cor:forgetloops}, it is easy to construct examples of matrices $A$ for which $M(L)$ has (co-)loops, and for which $L^{-1}$ is dual defective. For matrices with no loops or coloops, the dimension of $\nabla(L^{-1})$ equals the dimension of $L \star {L^\perp}$ (Proposition \ref{prop:reducetoLALB}). This is the rank of the $(n+1) \times (n+1)$ Jacobian matrix of \eqref{eq:auxjacobian} over $\mathbb{C}(t,u)$ minus one. That matrix is given by
\begin{equation} \label{eq:jacobian}
    {\cal J}_L(t,u) \, = \, \begin{pmatrix}
        {\rm diag}(B \, u) \cdot A^\top & {\rm diag}(A^\top t) \cdot B
    \end{pmatrix}. 
\end{equation}
Notice that the vector $(t,-u)^\top = (t_0,\ldots, t_d, -u_0, \ldots, -u_{n-d-1})^\top$ lies in its kernel, so that the rank of ${\cal J}_L(t,u)$ over $\mathbb{C}(t,u)$ is at most $n$. The following criterion for defectivity is immediate. 
\begin{proposition} \label{prop:whenhypersurface}
    If $M(L)$ has no coloops, then $L^{-1}$ is dual defective if and only if all $n \times n$ minors of the matrix ${\cal J}_L(t,u)$ in \eqref{eq:jacobian} are identically zero. Equivalently, $\nabla(L^{-1})$ is a hypersurface if and only if ${\cal J}_L(t^*,u^*)$ has rank $n$ over $\mathbb{C}$ for some $(t^*,u^*) \in \mathbb{P}^d \times \mathbb{P}^{n-d-1}$.
\end{proposition}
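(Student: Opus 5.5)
The plan is to chain three earlier results and then apply the Jacobian criterion for the dimension of the image of a map. First, by Proposition \ref{prop:reducetoLALB}, since $M(L)$ has no coloops (and no loops by \eqref{eq:assum_noloops}), $\dim \nabla(L^{-1}) = \dim (L \star L^\perp)$. Here we use $\nabla(L^{-1}) = L^2 \star L^\perp$ from Proposition \ref{prop:reciprocalHornKapranov}. So $L^{-1}$ is dual defective exactly when $\dim(L \star L^\perp) < n-1$.

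Second, $L\star L^\perp$ is the closure of the image of the bilinear rational map \eqref{eq:auxjacobian}. Lift it to the affine map $\Psi:\mathbb{C}^{d+1}\times\mathbb{C}^{n-d}\to\mathbb{C}^{n+1}$, $(t,u)\mapsto (A^\top t)\star(Bu)$, whose image is the affine cone over $L\star L^\perp$. Over $\mathbb{C}$ (characteristic zero), the dimension of the closure of the image of a dominant-onto-its-image polynomial map equals the generic rank of its Jacobian. We compute that Jacobian directly. The derivative in $t$ of the $i$-th coordinate $(a_i\cdot t)(b_i\cdot u)$ is $(b_i\cdot u)\,a_i^\top$, and the derivative in $u$ is $(a_i\cdot t)\,b_i^\top$. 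So the Jacobian is exactly ${\cal J}_L(t,u)$ from \eqref{eq:jacobian}.

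Hence the affine cone has dimension $\operatorname{rank}_{\mathbb{C}(t,u)}{\cal J}_L$, and $\dim(L\star L^\perp)=\operatorname{rank}{\cal J}_L-1$. The kernel vector $(t,-u)$ shows that this rank is at most $n$. It also reflects the bihomogeneity of $\Psi$: scaling $t$ by $\lambda$ and $u$ by $\lambda^{-1}$ leaves $\Psi$ fixed. Therefore $\nabla(L^{-1})$ is a hypersurface iff the generic rank is $n$, and $L^{-1}$ is dual defective iff the generic rank is less than $n$. The latter holds iff all $n\times n$ minors vanish identically as polynomials in $(t,u)$.

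Finally, the equivalence with a single point $(t^*,u^*)$ follows because the generic rank over $\mathbb{C}(t,u)$ is the maximum of the pointwise ranks over $\mathbb{C}$. An $n\times n$ minor is a nonzero polynomial iff it is nonzero at some point. Since the rank is unchanged by rescaling $t$ and $u$ separately, this is well defined on $\mathbb{P}^d\times\mathbb{P}^{n-d-1}$. No step is a real obstacle. The only point needing care is the justification that the dimension of the image equals the generic Jacobian rank, which is standard (generic smoothness or the constant rank theorem at a general point in characteristic zero). We also need Proposition \ref{prop:reducetoLALB} to transfer from $L\star L^\perp$ to $\nabla(L^{-1})$; this step is where the no-coloops hypothesis is used.
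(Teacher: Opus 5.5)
Your proposal is correct and follows the paper's own argument. The paper also reduces, via Propositions \ref{prop:reciprocalHornKapranov} and \ref{prop:reducetoLALB}, to $\dim(L\star L^\perp)$, identifies this with the generic rank over $\mathbb{C}(t,u)$ of the Jacobian ${\cal J}_L(t,u)$ of \eqref{eq:auxjacobian} minus one, uses the kernel vector $(t,-u)^\top$ to bound that rank by $n$, and then calls the criterion immediate. Your additional details fill in what the paper leaves implicit: the explicit Jacobian computation, the characteristic-zero rank criterion for the dimension of an image, and the passage from generic to pointwise rank on $\mathbb{P}^d\times\mathbb{P}^{n-d-1}$.
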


The next example illustrates that not all dual defective cases come from (co-)loops.
\begin{example}[$d = 2, n = 5$]
    We consider matrices $A, B$ of the following form: 
    \[ A \, = \, 
\begin{pmatrix}
1 & 0 & 0 & a_{11} & a_{12} & a_{13} \\
0 & 1 & 0 & a_{21} & a_{22} & a_{23} \\
0 & 0 & 1 & a_{31} & a_{32} & a_{33}
\end{pmatrix}, \quad B^\top \, = \, \begin{pmatrix}
- a_{11} & - a_{12} & - a_{13} & 1 & 0 & 0 \\
- a_{21} & - a_{22} & - a_{23} & 0 & 1 & 0 \\
- a_{31} & - a_{32} & - a_{33} & 0 & 0 & 1
\end{pmatrix}.\]
The $6 \times 6$ matrix ${\cal J}_L(t,u)$ has $36$ minors of size $5 \times 5$. The coefficients of all these minors are polynomials in the nine variables $a_{11}, \ldots, a_{33}$. Together with the polynomial $g = a_{11}a_{22}a_{33}y-1$, these generate an ideal $I$ in the ring $\mathbb{Q}[a_{11}, \ldots, a_{33},y]$. The generator $g$ ensures that each $a_{ii}$ is non-zero, so that $M(L)$ has no loops, and no coloops. The minimal primes of $I$ are
\[ \langle g,\; a_{23},\; a_{13},\; a_{32},\; a_{31} \rangle, \quad 
\langle g,\; a_{23},\; a_{32},\; a_{12},\; a_{21} \rangle, \quad 
\langle g,\; a_{13},\; a_{12},\; a_{31},\; a_{21} \rangle. \]
Each of these gives a five-dimensional family of matrices whose reciprocal linear space is dual defective. For instance, the matrix $a_{11}=a_{22}=a_{33} = 1, a_{23} = a_{13} = a_{32} = a_{31} = 0, a_{12} = 3, a_{21} = 2$ lies on the first component. Its matroid discriminant variety $\nabla(L^{-1})$ has codimension two. Its ideal is generated by $z_2 +z_5$ and a quartic polynomial with $35$ terms.
\end{example}

We conjecture that $\nabla(L^{-1})$ has codimension $> 1$ if and only if $M(L)$ is not connected (Conjecture \ref{conj:dualdefectivity}). If $M(L)$ is connected, then Conjecture \ref{conj:degree} predicts the degree of $\nabla(L^{-1})$.

\section{Matroid hypergeometric systems} \label{sec:5}
Given the fact that the principal matroid determinant resembles the principal $A$-determinant in many ways (see Sections \ref{sec:2} and \ref{sec:3}), it is natural to ask what the analog of the GKZ hypergeometric system is in our matroid setting. Here, we provide an answer to this question. We introduce the \emph{matroid hypergeometric system} and prove some fundamental properties.

As before, $L\subset\mathbb{P}^n$ is a $d$-dimensional linear subspace satisfying \eqref{eq:assum_noloops}, represented as the projectivized row span of a matrix $A = (a_{ij})\in \mathbb{C}^{(d+1) \times (n+1)}$. We shall write $\partial_i$ for the partial derivative with respect to $z_i$. We select a parameter vector $u\in\C^{n+1}$ and determine $s\in\C$~via 
\begin{equation}\label{eq:homogeneity}
s + d+1 +u_0 + \cdots + u_n =0.    
\end{equation}
Consider the following sets of operators in the Weyl algebra $D_{\C^{n+1}}=\mathbb{C}\langle z_i,\partial_i \, ; \,  i=0,\dots,n\rangle$: 
\begin{align}
    H &\, = \, \sum_{j = 0}^n z_j \, \partial_j + s,\label{eq:op1} \\
    P_i&\, = \, \sum_{j=0}^n a_{ij} \, z_j \, \partial_j^2 - \sum_{j=0}^n a_{ij}\, u_j\, \partial_j, \quad i = 0, \ldots, d,\label{eq:op2}\\ 
    Q_h & \, = \,  h(\partial_0, \ldots, \partial_n), \quad h(x_0, \ldots, x_n) \in I(L^{-1}).\label{eq:op3} 
\end{align}
The operator $H$ annihilates functions in $z$ which are homogeneous of degree $-s$. Recall that $I(L^{-1})$ is the homogeneous prime ideal defining the reciprocal linear space $L^{-1} \subset \mathbb{P}^n$. 
The infinite set of operators $Q_h$ annihilates a function $g(z)$ if and only if a finite set of homogeneous generators $h_1, \ldots, h_r$ for the ideal $I(L^{-1})$ annihilates $g(z)$.

The system of partial differential equations given by the operators $H, P_i$ and $Q_h$ is reminiscent of {GKZ systems}, the basics of which were recalled in Section \ref{sec:2}. 
We write $H_L(u)$ for the left ideal of $D_{\C^{n+1}}$ generated by $H, P_i$ and $Q_h$ and $M_L(u)$ for the left $D_{\C^{n+1}}$-module $D_{\C^{n+1}}/H_L(u)$. Notice that $H_L(u)$ and $M_L(u)$ are independent of the matrix $A$ we choose to represent the linear space $L$. 
Note that $M_L(u)$ only depends on $u = (u_0, \ldots, u_n)$ by \eqref{eq:homogeneity}.
\begin{definition} \label{def:matroidhypergeometricsystem}
    Let $L \subset \mathbb{P}^n$ be a linear space. The \emph{matroid hypergeometric system} of $L$ is the $D_{\mathbb{C}^{n+1}}$-module $M_L(u)$ defined above. The \emph{matroid hypergeometric ideal} is $H_L(u)$.
\end{definition}

Let $V \subset \mathbb{P}^n\times (\mathbb{P}^n)^\vee$ be a closed subvariety.
Its {\it multicone} is defined to be the vanishing locus of the defining bihomogeneous ideal $I(V)\subset \mathbb{C}[x_0,...,x_n,z_0,...,z_n]$ in the affine space $\mathbb{C}^{n+1}\times (\mathbb{C}^{n+1})^\vee=T^*\C^{n+1}$.
For instance, in Section \ref{sec:2} we defined the conormal variety ${\rm Con}(X)$ as a subvariety of $\mathbb{P}^n\times(\mathbb{P}^n)^\vee$. Here, it is more convenient to work with its multicone, and we will abuse notation by using the same symbol ${\rm Con}(X)$ for the multicone.

\begin{theorem}\label{thm:holonomicity}
For any choice of the parameters $u \in \mathbb{C}^{n+1}$, the matroid hypergeometric system $M_L(u)$ is a holonomic $D_{\C^{n+1}}$-module.
Moreover, its singular locus is contained in the vanishing locus of the principal matroid determinant $E_L$.
\end{theorem}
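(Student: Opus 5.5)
We bound the characteristic variety of $M_L(u)$ by the zero set of the principal symbols of the generators, and then identify that zero set with a union of conormal varieties. The principal symbols (order filtration) are as follows: $\sigma(H)=\sum_j z_jx_j$; $\sigma(P_i)=\sum_j a_{ij}z_jx_j^2$, i.e.\ the entries of $A_z\cdot x^2$; and $\sigma(Q_h)=h(x)$ for $h\in I(L^{-1})$. Hence ${\rm Char}(M_L(u))$ is contained in the affine variety
\[ W \, = \, \{(x,z)\in \mathbb{C}^{n+1}\times(\mathbb{C}^{n+1})^\vee \, : \, x\in \widehat{L^{-1}}, \ A_z\cdot x^2=0,\ z\cdot x=0\},\]
where $\widehat{L^{-1}}$ is the affine cone. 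Note that $z\cdot x=0$ is already implied: the first row relation with $t$ gives $\sum_j \ell_j(t) z_j x_j^2$, and on $x=\ell^{-1}(t)$ this is $\sum_j z_jx_j$. The key step is to show that $W$ is a union of $(n+1)$-dimensional conic Lagrangian varieties, namely the multicones of ${\rm Con}(L^{-1}\cap\Lambda_F)$ for $F\in{\cal F}(M(L))$, together with the zero section $\{x=0\}$. Holonomicity then follows since the characteristic variety is involutive, so each component has dimension $\ge n+1$.

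To carry this out, I will stratify $x$ using Proposition \ref{prop:stratify}. Since $\widehat{L^{-1}}\setminus\{0\}=\bigsqcup_F \widehat{L^{-1}\cap T_F}$, and for $x$ supported on a flat $F$ the equations $A_z x^2=0$ only involve $z_j$ for $j\in F$, the relevant condition becomes $A_{F}\,{\rm diag}(z_F)\,x_F^2=0$. By \eqref{eq:conLArec} applied to $L_F$, this is exactly the condition that $z_F$ lies in the conormal fibre of $L_F^{-1}\cap T_F$ at $x_F$, while $z_j$ for $j\notin F$ is free. These are precisely the conditions that $(x,z)$ lies in the conormal of the stratum $L^{-1}\cap T_F\subset\mathbb{P}^n$, since the tangent space is contained in $\Lambda_F$. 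For the dimension count, the stratum has dimension ${\rm rank}(F)-1$ and its conormal fibre in $(\mathbb{C}^{n+1})^\vee$ has dimension $n+1-{\rm rank}(F)$; adding the cone direction of $x$, the total dimension is $n+1$. Taking closures, we get $W\subseteq \{x=0\}\cup\bigcup_F \overline{{\rm Con}(L^{-1}\cap T_F)}$, which is of pure dimension $n+1$. So ${\rm Char}(M_L(u))$ has dimension at most $n+1$, and by Bernstein's inequality it is exactly that; this gives holonomicity.

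For the singular locus, we remove the zero section and project to $z$. By the computation above, the projection of $W\setminus\{x=0\}$ is $\bigcup_F {\rm pr}_2({\rm Con}(L^{-1}\cap T_F))$. By \eqref{eq:pr2Y=...} together with Theorem \ref{thm:factorization} (equivalently Lemma \ref{lem:ELisprY}), its closure is $V(E_L)$. Saturating by $\langle x\rangle$ and eliminating therefore yields a subset of $V(E_L)$.

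The main obstacle I expect is the bookkeeping near the boundary. Two points need care. First, I must verify that the symbol ideal's zero set really sees only points of $L^{-1}$ and not of some larger variety; this holds because all of $I(L^{-1})$ is included via the $Q_h$. Second, I must check that the closures of the strata conormals introduce no extra components of dimension greater than $n+1$, and that the projection statement holds after taking closure. Both points should reduce to Proposition \ref{prop:stratify} and the finite map $\varphi:(x,z)\mapsto(x^2,z)$ used in Lemma \ref{lem:recAdisc_geom}.
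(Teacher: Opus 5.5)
Your proposal is correct and follows essentially the same route as the paper. Both arguments bound ${\rm Char}(M_L(u))$ by the common zeros of the principal symbols of $H$, $P_i$ and $Q_h$, and decompose that set via Proposition~\ref{prop:stratify} into the zero section and the conormal varieties ${\rm Con}(L^{-1}\cap T_F)$, each of dimension $n+1$. Both then project to $z$-space and use Theorem~\ref{thm:factorization} and Lemma~\ref{lem:ELisprY}. Your version is slightly more careful than the paper's on two points: you work directly with $x\in\widehat{L^{-1}}$ and $A_z\cdot x^2=0$ rather than with the multicone of $Y$, and you spell out the stratum-by-stratum dimension count.
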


\begin{proof}
Let $I\subset\C[x_0,...,x_n,z_0,...,z_n]$ be the defining ideal of the characteristic variety ${\rm Char}(M_L(u))$. By definition, this is the initial ideal of $H_L(u)$ obtained by taking the highest order terms in the symbols $\partial_0, \ldots, \partial_n$ of any operator in $H_L(u)$ and substituting $\partial_i$ by $x_i$.
It follows from the definition of $H_L(u)$ that $\sum_{i=0}^nz_ix_i \in I$, $\sum_{j=0}^na_{ij}z_jx_j^2 \in I$ for each $i=0,\dots,d$ and $h(x_0, \ldots, x_n) \in I$ for each $h \in I(L^{-1})$. 
This implies that ${\rm Char}(M_L(u))$ is contained in 
\[ W \, = \, \hat{Y} \cup V(x_0, \ldots, x_n), \]
where $\hat{Y}$ is 
the multicone of $Y$ from \eqref{eq:Y}. Equations \eqref{eq:conLArec} and \eqref{eq:decompY} give the decomposition 
\begin{equation} \label{eq:W} W \, = \, \bigsqcup_{F\in\mathcal{F}(M(L))}{\rm Con}(L^{-1}\cap T_F). \end{equation}
Here we use the convention that, for the empty flat $F = \emptyset$, ${\rm Con}(L^{-1} \cap T_\emptyset) = V(x_0, \ldots, x_n)$. In $D$-module theory, this is often called the \emph{zero section}. 
    The inclusion ${\rm Char}(M_L(u)) \subseteq W$ shows that ${\rm Char}(M_L(u))$ has dimension $n+1$. Hence, $M_L(u)$ is a holonomic $D_{\C^{n+1}}$-module.
    Finally, the inclusion ${\rm Sing}(M_L(u)) \subseteq V(E_L)$ follows from Lemma \ref{lem:ELisprY}.
\end{proof}

We conjecture that the inclusion ${\rm Char}(M_L(u)) \subseteq W$ is an equality (Conjecture~\ref{conj:bigconjecture}(2)). 

\begin{example} \label{ex:M2}
The following Macaulay2 \cite{M2} code snippet shows how we can confirm Theorem \ref{thm:holonomicity} for the matroid hypergeometric system $M_L(u)$ when $L$ is defined by \eqref{eq:generic matroid}. The generators of the matroid hypergeometric ideal are displayed below in \eqref{eq:operatorsbanana} (setting $n = 3$).
The command \texttt{holonomicRank} computes the holonomic rank of a $D$-module.
The output of this command shows that the matroid hypergeometric system $M_L(u)$ has holonomic rank $7$ in this case.
\begin{lstlisting}
D = QQ[z_0..z_3, dz_0..dz_3, WeylAlgebra => {z_0..z_3 => dz_0..dz_3}]
u = {1/2, 1/3, 1/5, 1/7}; s = -sum(u) -3
-- Set up the Weyl algebra and choose parameters
A = matrix{{-1, 1, 0, 0},{-1, 0, 1, 0},{-1, 0, 0, 1}}
H = sum(0..3, i -> z_i*dz_i) + s
Ps = apply(entries A, row -> ( sum(0..3, j -> row#j * z_j * dz_j^2) 
- sum(0..3, j -> row#j * u#j * dz_j) ))
Q = sum(subsets({dz_0, dz_1, dz_2, dz_3}, 3), product)
I = ideal(flatten {H, Ps, Q}) -- The matroid hypergeometric ideal
print holonomicRank I -- The holonomic rank is 7.
J = DsingularLocus I -- Computes the defining ideal of the singular locus
factor (gens J)_0_0
\end{lstlisting}
The last output is $z_0z_1z_2z_3\Delta(L^{-1})$, which matches Theorem \ref{thm:holonomicity} by Example \ref{ex:introcontd}.
\end{example}

A \emph{solution} to the matroid hypergeometric system $M_L(u)$ is a holomorphic function $g(z)$ which is annihilated by the operators \eqref{eq:op1}-\eqref{eq:op3}. In the second part of this section, we identify
\emph{Euler integrals} \cite{Matsubara-Heo2023Four} as solutions to $M_L(u)$.
We consider the following projective {Euler integral}:
\begin{equation} \label{eq:eulerint} g_\Gamma(z) \, = \, \int_\Gamma f(t, z)^{-s} \, \prod_{i = 0}^n \ell_i(t)^{u_i} \, \Omega(t) \, = \, \int_{\Gamma} \varphi(t,z) \, f(t,z)^{d+1} \, \Omega(t).\end{equation}
We start by parsing the notation. 
We let $\Omega(t) = \sum_{i = 0}^d (-1)^i \, t_i\,{\rm d}t_0 \wedge \cdots \wedge \widehat{{\rm d}t_i} \wedge \cdots \wedge {\rm d} t_d$ be the standard form on $\mathbb{P}^d$. It is the unique global section of  $\Omega_{\mathbb{P}^d}^d(d+1)\simeq \mathcal{O}_{\mathbb{P}^d}$.
The notation $\widehat{\cdot}$ means that ${\rm d}t_i$ is omitted.
The exponents $s, u_i$ are as above, and $\ell_i \in \mathbb{C}[t_0, \ldots, t_d]_1$ is the linear form corresponding to the $i$-th column of a matrix $A \in \mathbb{C}^{(d+1) \times (n+1)}$. Their arrangement is denoted by ${\cal A} = \{ t \in \mathbb{P}^d \,:\, \ell_0(t) \cdots \ell_n(t) = 0 \}$. We use $f$ to denote the rational function $f(t,z) = \sum_{i = 0}^n z_i \,  \ell_i^{-1}(t)$. For the integrand in  \eqref{eq:eulerint} to descend to a form on $\mathbb{P}^d$, we must impose the homogeneity condition $\lambda^{-d-1}\varphi(\lambda t, z) \, \Omega(\lambda t) = \varphi(t,z) \, \Omega(t)$, which matches \eqref{eq:homogeneity}.
With this condition, the integrand $\varphi = f^{-s-d-1} \prod_{i=0}^n \ell_i^{u_i}$ is a multi-valued function of $t \in \mathbb{P}^d \setminus {\cal A}$ for each fixed $z$. The integration cycle $\Gamma$ contains the information on which branch to integrate. More precisely, $\Gamma$ is a \emph{twisted cyle} from the theory of twisted (co-)homology on the space $X_z = \{ t \in \mathbb{P}^d \setminus {\cal A} \, : \, f(t,z)  \neq 0 \}$ \cite[Section 3]{Matsubara-Heo2023Four}. To match our notation with \cite{Matsubara-Heo2023Four}, one may dehomogenize by changing coordinates so that $\ell_i(t) = t_i$ for $i = 0, \ldots, d$ and setting $t_0 = 1$.

We now construct a $D$-module from \eqref{eq:eulerint}, following the well-known construction in \cite[Proposition 1.5.28 (i)]{hotta2007d}.
Define $X = \{ (t, z) \in (\mathbb{P}^d\setminus\mathcal{A})\times \mathbb{C}^{n+1} \, : \, f(t,z) \neq 0\}$ and let $\Omegarel{p}$ be the $\mathcal{O}_{X}$-module of relative $p$-forms.
Concretely, an element $\eta\in \Omegarel{p}$ is of the form
\begin{equation}\label{eq:relative p-form}
\eta=\sum_{\substack{I\subset\{ 0,\dots,d\}\\ |I|=p}}g_{I}(t,z)\bigwedge_{i\in I} {\rm d}t_{i},\ \quad g_{I}(t,z) \, \in \, \C\Big[ t_0,\dots,t_d,z_0,\dots,z_n,\frac{1}{f\ell_0\cdots\ell_n} \Big],
\end{equation}
where $g_{I}(t,z)$ is homogeneous of degree $(-p)$ in $t$, i.e., $g_I(\lambda t,z) = \lambda^{-p}g_I(t,z)$. 
We define
\[
\omega \, := \, d_t \log \varphi(t,z) \, = \,  -(s+d+1) \, \frac{d_tf(t,z)}{f(t,z)}+\sum_{j=0}^nu_j \, \frac{d_t\ell_j(t)}{\ell_j(t)} \, \in  \, \Omegarel{1}.
\]
Here, $d_t$ is the exterior derivative with respect to $t$.
The \emph{twisted differential} \[  \nabla_\omega: \, \Omegarel{p}\, \longrightarrow \,  \Omegarel{p+1}, \quad \eta \, \longmapsto \, d_t \, \eta+\omega\wedge \eta\, \] 
satisfies $\nabla_\omega^2=0$.
 The $D_{\C^{n+1}}$-module
$M_L^{\rm hyp}(u)$ is the $d$-th cohomology group $H^d(\Omegarel{\bullet},\nabla_\omega)$ of the \emph{twisted de Rham complex} $(\Omegarel{\bullet},\nabla_\omega )$.
The $D_{\C^{n+1}}$-module structure on $M_L^{\rm hyp}(u)$ is defined to mimic the action of the partial derivatives $\partial_i$ on Euler integrals \eqref{eq:eulerint}. That is, the identity $\partial_i \int_{\Gamma} \varphi \eta \, = \, \int_\Gamma\varphi(\partial_i\eta-(s+d+1)\frac{\partial_if}{f}\eta)$ motivates the following definition
\[
\partial_i\bullet\left[\eta\right]:=\left[\partial_i\eta-(s+d+1)\frac{\partial_if(t,z)}{f(t,z)}\eta\right],\quad [\eta]\in M_L^{\rm hyp}(u),\ i=0,\dots,n.
\]
Here $\partial_i\eta\in \Omega^d_{X/Z}$ is  obtained by applying $\partial_i$ to all the coefficients $g_I(t,z)$ in \eqref{eq:relative p-form}.
The next proposition relates $M^{\rm hyp}_L(u)$ with the matroid hypergeometric system $M_L(u)$.

\begin{proposition}\label{prop:psi}
    There is a unique $D_Z$-module homomorphism $\psi:M_L(u)\to M^{\rm hyp}_L(u)$ which sends the class $[1]\in M_L(u)$ to $\left[ f(t,z)^{d+1}\Omega(t)\right]\in M^{\rm hyp}_L(u)$.
    In particular, each differential operator in the matroid hypergeometric ideal $H_L(u)$ annihilates the Euler integral \eqref{eq:Euler integral}.
\end{proposition}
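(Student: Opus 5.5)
Since $M_L(u)=D_{\C^{n+1}}/H_L(u)$ is cyclic, a $D$-module homomorphism sending $[1]$ to $[\eta_0]$, with $\eta_0:=f(t,z)^{d+1}\Omega(t)$, exists (and is then unique) if and only if every generator of $H_L(u)$ annihilates $[\eta_0]$ under the action $\bullet$. So the plan is to verify $H\bullet[\eta_0]=0$, $P_i\bullet[\eta_0]=0$ and $Q_h\bullet[\eta_0]=0$ in $H^d(\Omegarel{\bullet},\nabla_\omega)$. The statement about Euler integrals then follows by pairing with a twisted cycle $\Gamma$, because $\partial_i$ commutes with $\int_\Gamma\varphi\,\cdot$ via the defining identity of $\bullet$, and $\nabla_\omega$-exact forms integrate to zero.

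The key observation is that, formally, $\partial_i\bullet[\eta]$ corresponds to differentiating $\varphi\eta$ in $z_i$. Writing $\Phi=\varphi f^{d+1}=f^{-s}\prod_j\ell_j^{u_j}$, we have $\partial^\alpha\Phi=(-s)_{\downarrow|\alpha|}f^{-s-|\alpha|}\ell^{-\alpha}\prod_j\ell_j^{u_j}$, since $\partial_if=\ell_i^{-1}$. Accordingly, I would first prove by induction that $\partial^\alpha\bullet[\eta_0]=[c_{|\alpha|}\,f^{d+1-|\alpha|}\,\ell(t)^{-\alpha}\,\Omega]$, where $c_k$ is a constant depending only on $k$ and $s$. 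The generator $Q_h$ is then immediate. Decompose $h$ into homogeneous components of degree $k$ (the ideal is homogeneous, so each component lies in it); each gives $c_k f^{d+1-k}h(\ell_0^{-1},\dots,\ell_n^{-1})\Omega$. This vanishes identically because $h\in I(L^{-1})$ and $(\ell_0^{-1}:\dots:\ell_n^{-1})\in L^{-1}$. The generator $H$ follows from homogeneity: $\sum z_j\ell_j^{-1}=f$, so $\sum_j z_j\partial_j\bullet[\eta_0]=c_1[f^{d+1}\Omega]$, and with $c_1=-s$ this is $-s[\eta_0]$, as required.

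The main obstacle is $P_i$, which is where $\nabla_\omega$-exactness genuinely enters. Using the formula above, $P_i\bullet[\eta_0]$ is the class of
\[ \Big(c_2\sum_j a_{ij}z_j\ell_j^{-2}f^{d-1}-c_1\sum_j a_{ij}u_j\ell_j^{-1}f^{d}\Big)\Omega . \]
Since $\partial f/\partial t_i=-\sum_j a_{ij}z_j\ell_j^{-2}$ and $\partial\ell_j/\partial t_i=a_{ij}$, this equals $\varphi^{-1}\cdot(\text{const})\cdot\partial_{t_i}(\Phi)\,f^{d+1}\Omega$ up to normalization, i.e. a derivative of the integrand in $t_i$. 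I would therefore exhibit an explicit $(d-1)$-form, namely a constant multiple of the contraction $\iota_{\partial/\partial t_i}(dt_0\wedge\cdots\wedge dt_d)$ times $f^{d}$ (which has the right homogeneity degree $-(d-1)$), and check that its image under $\nabla_\omega$ reproduces this expression. The check uses the identity $d_t(g\,\iota_{\partial_{t_i}}dt_0\wedge\cdots\wedge dt_d)=\partial_{t_i}g\;dt_0\wedge\cdots\wedge dt_d$, together with the relation between $dt_0\wedge\cdots\wedge dt_d$ and $\Omega$ via the Euler vector field. The last point is the delicate part: the contraction is not homogeneous of degree zero in the right sense, so I would instead use the invariant form $\iota_{\partial_{t_i}}\iota_E(dt_0\wedge\cdots\wedge dt_d)$ and absorb the extra term using the homogeneity relation \eqref{eq:homogeneity}. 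Matching the constants $c_1,c_2$ against the factor $(s+d+1)$ in $\omega$ completes the argument.
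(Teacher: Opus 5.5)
Your proposal is correct and matches the paper's proof. You reduce to checking that $H$, $P_i$ and $Q_h$ kill $[f^{d+1}\Omega]$, and you use the falling-factorial formula $\partial^\alpha\bullet[f^{d+1}\Omega]=(-s)(-s-1)\cdots(-s-|\alpha|+1)\,[f^{d+1-|\alpha|}\ell^{-\alpha}\Omega]$ for $H$ and $Q_h$. Your primitive $f^d\,\iota_{\partial_{t_i}}\iota_E(dt_0\wedge\cdots\wedge dt_d)$ for $P_i$ is exactly the paper's $\xi'$ up to sign. One correction to your heuristic: the function differentiated in $t_i$ is $\varphi f^{d}=\Phi/f$, not $\Phi$, namely $P_i\bullet[f^{d+1}\Omega]=s\,[\varphi^{-1}\partial_{t_i}(\varphi f^{d})\,\Omega]$. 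The extra Euler-field term vanishes precisely because $\varphi f^d$ is homogeneous of degree $-d$ in $t$, by \eqref{eq:homogeneity}, as you anticipated.
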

\begin{proof}
    It suffices to prove that $\psi: [1] \mapsto [f(t,z)^{d+1}\Omega(t)]$ is well-defined. 
    It is clear that $H \bullet \left[ f(t,z)^{d+1}\Omega(t)\right] = 0$ since $f(t,z)$ is homogeneous of degree $1$ in $z$. 
    Next, we show that $P_i \bullet \left[ f(t,z)^{d+1}\Omega(t)\right] = 0$.
    For any $i = 0, \ldots, d$, we have
    \[ {P}_i \bullet \left[ f(t,z)^{d+1}\Omega(t)\right] \, = \, -s \left[\Big( -(s+1) \frac{\partial f}{\partial t_i} f^{-2} + \sum_{j = 0}^n u_j \frac{\partial \ell_j}{\partial t_i} \ell_j^{-1} f^{-1} \Big )f^{d+1}\Omega(t)\right]. \]
    We must show that the following class is zero in $M^{\rm hyp}_L(u)$:
    \[ \xi \, = \, \left[\Big( -(s+1) \frac{\partial f}{\partial t_i} f^{-2} + \sum_{j = 0}^n u_j \frac{\partial \ell_j}{\partial t_i} \ell_j^{-1} f^{-1} \Big )f^{d+1} \, \Omega(t)\right].\]
    Namely, we must show that $\xi$ can be written as $\nabla_\omega \xi'$ for some $\xi'\in\Omegarel{d-1}$. 
    We calculate that 
    \[ \xi' \, = \, f^d\sum_{j\neq i} {\rm sgn}(i,j)t_j{\rm d}t_1 \wedge \cdots \wedge \widehat{{\rm d} t_i}\wedge \cdots \wedge \widehat{{\rm d} t_j} \wedge \cdots \wedge {\rm d}t_d,\quad 
    {\rm sgn}(i,j):=
    \begin{cases}
        (-1)^{i+j}&(j<i)\\
        (-1)^{i+j+1}&(i<j).
    \end{cases}
    \]
    This shows $P_i \bullet \left[ f(t,z)^{d+1}\Omega(t)\right] = 0$. Finally, we must prove that $Q_h \bullet \left[ f(t,z)^{d+1}\Omega(t)\right] = 0$. Let $d_h$ be the degree of a homogeneous polynomial $h \in I(L^{-1})$. 
    The claim follows from
    \[ Q_h \bullet \left[ f(t,z)^{d+1}\Omega(t)\right] \, = \, (-s)(-s-1) \cdots (-s-d_h+1)\left[ h\Big(\frac{1}{\ell_0}, \ldots, \frac{1}{\ell_n} \Big) f(t,z)^{d-d_h+1}\Omega(t)\right]\]
    and $h(\ell_0^{-1}, \ldots, \ell_n^{-1}) = 0$ by the definition of $L^{-1}$. 
\end{proof}
\noindent
We expect that the $D_{\mathbb{C}^{n+1}}$-module homomorphism $\psi:M_L(u)\to M^{\rm hyp}_L(u)$ from Proposition \ref{prop:psi} is in fact an isomorphism.
This is Conjecture \ref{conj:Integral representation}.

Let us show that certain Feynman integrals from particle physics are of the form \eqref{eq:eulerint}.

\begin{example}\label{ex:banana integral}
The Feynman integral of a \emph{banana graph} with $n$ internal edges (in Feynman-parameter representation with dimensional and analytic regularization \cite[Section 2.5.3]{weinzierl2022feynman})~is 
\[ {\cal I}_n(z) \, = \, \int_{\mathbb{R}^{n-1}_+} \frac{{\cal U}_n(\alpha)^\mu}{{\cal F}_n(\alpha,z)^\nu} \, \Omega(\alpha),  \]
where $\alpha = (\alpha_1, \ldots, \alpha_n)$ and ${\cal U}_n, {\cal F}_n$ are the first and second \emph{Symanzik polynomials}:
\begin{equation}
{\cal U}_n(\alpha) \, = \, \sum_{i = 1}^{n} \prod_{\substack{j \in \{1, \ldots, n\} \\ j \neq i}} \alpha_j, \quad {\cal F}_n(\alpha,z) \, = \, (z_1 \, \alpha_1 + \cdots + z_n \, \alpha_n) \, {\cal U}_n(\alpha) \, - \, z_0 \, \alpha_1 \cdots \alpha_n.
\end{equation}
The integration contour is the positive orthant $\mathbb{R}^{n-1}_+$ in $\mathbb{P}^{n-1}$, and the exponents $\mu, \nu \in \mathbb{C}$ satisfy the homogeneity condition $\mu (n-1) - \nu n + n = 0$. 
In physics, the parameters $z_1, \ldots, z_n$ are the squared masses of the \emph{internal particles} travelling along edges $1, \ldots, n$, and $z_0 = p^2$ is the squared Minkowski norm of the momentum vector $p$. We rewrite ${\cal I}_n(z)$ as
\begin{align*} 
{\cal I}_n(z) &\, = \, \int_{\mathbb{R}^{n-1}_+} \Big ( \alpha_1 \cdots \alpha_n \big (\tfrac{1}{\alpha_1} + \cdots  + \tfrac{1}{\alpha_n} \big )\Big )^{\mu - \nu} \Big ( z_1  \alpha_1 + \cdots + z_n \alpha_n - \frac{z_0}{\big( \tfrac{1}{\alpha_1} + \cdots +\tfrac{1}{\alpha_n}  \big) }\Big )^{-\nu} \, \Omega(\alpha).
\end{align*}
Changing coordinates by setting $t_{i-1} = \alpha_i^{-1}$ and $d = n-1$ we obtain 
    \begin{equation}\label{eq:banana integral}
      {\cal I}_n(z)  = (-1)^{u_0} \int_{\Gamma} \Big ( \frac{z_0}{-t_0 - \cdots - t_d} + \frac{z_1}{t_0} + \cdots + \frac{z_n}{t_d}\Big )^{-s}  (-t_0-\cdots-t_d)^{u_{0}} t_0^{u_1}\cdots t_d^{u_n}  \Omega(t),
    \end{equation}
where $s=\nu,$ $u_{0}=\mu-\nu$, $u_1=\cdots=u_n=\nu-\mu-2$, and $\Gamma=\mathbb{R}^{n-1}_+$.
Up to a constant, this is the integral \eqref{eq:eulerint} for the $(d+1) \times (n+1)$ matrix $A = \begin{pmatrix} -\sum_{i = 1}^n e_i & e_1 & \cdots & e_n \end{pmatrix}$. Proposition \ref{prop:psi} gives the following $n+2$ annihilators of \eqref{eq:banana integral}: 
    \begin{align}
    \label{eq:operatorsbanana}
       \begin{split} H &\, = \, z_0 \, \partial_0 + \cdots + z_n \, \partial_n + s, \quad Q \, = \, \sum_{i=0}^n\partial_0\cdots\widehat{\partial_i}\cdots\partial_n,\\ 
        P_i & \, = \, -z_0\partial_0^2+z_{i}\partial_{i}^2- (-u_0 \partial_0 +u_{i} \partial_{i}), \quad i = 1, \dots, n. 
        \end{split}
    \end{align} 
More generally, the operators $H, Q, P_i$ of the function ${\cal I}_n(z)$ were recently found in the physics literature, see \cite[Equations (3.4) and (3.5)]{Flieger}.
\end{example}
Example \ref{ex:Lauricella} deals with the banana integral for $n = 3$ and shows that it recovers the Lauricella function $F_C^{(3)}$. We end the section by explaining how this extends to general $n$. 

\begin{example}
In this example, we assume that $\Gamma$ is such that the following identity holds for any $(u_0,\dots,u_{n}) \in (\C \setminus \mathbb{Z})^{n+1},(m_1,\dots,m_n) \in \mathbb{Z}^{n}$, and $s=-\sum_{i=0}^{n}u_i-n$:
\begin{equation}\label{eq:Dirichlet integral}
\int_{\Gamma}(t_0+\cdots+t_d)^{s+u_{0}+\sum_{i=1}^n m_i}t_0^{u_1-m_1}\cdots t_d^{u_n-m_n} \, \Omega(t)
\, = \, 
\frac{(1+s+u_{0})_{\sum_{i=1}^n m_i}}{(-u_1)_{m_1}\cdots(-u_n)_{m_n}}.    
\end{equation}
Here $(a)_m:=a\cdot (a+1)\cdots(a+m-1)$
is the Pochhammer symbol and $d = n-1$. We denote the function \eqref{eq:Dirichlet integral} by $I(u;m)$. A cycle $\Gamma$ satisfying our assumption can be constructed as in \cite[Section 7]{beukers2010algebraic}.
When the ratios $|\frac{z_1}{z_{0}}|,\dots,|\frac{z_n}{z_{0}}|$ are sufficiently small, the contour $\Gamma$ can be regarded as a twisted cycle of the integral \eqref{eq:eulerint}, which we denote by the same symbol $\Gamma$ by abuse of notation. 
Writing $t_+ = t_0 + \cdots + t_d$, it follows from the multinomial theorem that
\[
\begin{split}
    \Big ( \frac{-z_0}{t_+} + \frac{z_1}{t_0} + \cdots + \frac{z_n}{t_d}\Big )^{-s}
&=(-z_0)^{-s}t_+^{s}\Big ( 1- t_+\Big (\frac{z_1}{z_0t_0} + \cdots + \frac{z_n}{z_0t_d}\Big )\Big)^{-s}\\
&=(-z_0)^{-s}t_+^{s}
\sum_{m_1,\dots,m_n=0}^\infty \frac{(s)_{m_1+\cdots+m_n}}{m_1!\cdots m_n!}\, t_+^{\sum_{i=1}^nm_i}\, \left(\frac{z_1}{z_0t_0}\right)^{m_1}\cdots \left(\frac{z_n}{z_0t_d}\right)^{m_n}.
\end{split}
\]
From this and \eqref{eq:Dirichlet integral}, we see that the integral $\mathcal{I}_n(z)$ in \eqref{eq:banana integral} has the following expansion: 
\begin{align*}
    \mathcal{I}_n(z)
    \, =& \, \,
    (-z_{0})^{-s}\sum_{m_1,\dots,m_n =0}^\infty \frac{(s)_{m_1+\cdots+m_n}}{m_1!\cdots m_n!}\left(\frac{z_1}{z_{0}}\right)^{m_1}\cdots\left(\frac{z_n}{z_{0}}\right)^{m_n}I(u;m)\\
    =& \, \, (-z_{0})^{-s}F_C^{(n)}\left(\substack{s,1+s+u_{0}\\ -u_1,\dots,-u_n};\frac{z_1}{z_{0}},\dots,\frac{z_n}{z_{0}}\right).
\end{align*}
Here we used the Lauricella function $F_C^{(n)}$ displayed in Example \ref{ex:Lauricella}.
\end{example}

\section{Euler discriminants and characteristic varieties}\label{sec:6}
In this section, we relate the vanishing locus of the principal matroid determinant to the {\it Euler discriminant locus} of the family of hypersurfaces in \eqref{eq:family of hypersurfaces}. This terminology first appeared in \cite[Definition 3.1]{esterov2013discriminant}. While the main result (Theorem \ref{thm:ED}) is purely geometric, our proof uses $D$-module theory. In particular, we use recent results from \cite{matsubara2025hypergeometric} relating the Euler discriminant to the singular locus of the $D$-module $M_L^{\rm hyp}(u) = H^d(\Omega^\bullet_{X/Z},\nabla_\omega)$ 
introduced in Section~\ref{sec:5}.

The base of the family \eqref{eq:family of hypersurfaces} is the affine space of parameters $Z =\C^{n+1}={\rm Spec}\,\C[z_0,\dots,z_n]$ and $Z^\vee =\C^{n+1}={\rm Spec}\,\C[x_0,\dots,x_n]$ is the dual vector space of $Z$. 
Matching the notation in previous sections, we have $\mathbb{P}^n=(Z^\vee\setminus\{0\})/\C^\times$ and $(\mathbb{P}^n)^\vee=(Z\setminus\{0\})/\C^\times$. Throughout the section, $L \subset \mathbb{P}^n$ is a linear space satisfying \eqref{eq:assum_noloops}.  Its reciprocal variety is $L^{-1} \subset \mathbb{P}^n$.

For each $z \in Z$, we define $H_z \, = \, \{ x \in \mathbb{P}^n \, : \, z_0x_0 + \cdots + z_n x_n = 0 \}$ 
 and we write $\chi_z$ for the signed Euler characteristic $(-1)^{d-1}\chi(V_z)$ of the very affine variety $V_z\simeq L^{-1}\cap T\cap H_z$ defined in \eqref{eq:family of hypersurfaces}. 
 Note that $\chi_z\geq0$ by  \cite[Theorem 1(iii)]{huh2013maximum}.
Let $\chi^*$ denote the maximum value of the function $z \mapsto \chi_z$.
The Euler discriminant locus $\nabla_\chi(Z)$ is defined by the following formula:
\begin{equation} \label{eq:eulerdisc}
\nabla_\chi:=\{z\in Z \, : \,  \chi_z<\chi^*\}.
\end{equation}
It follows from \cite[Theorem 3.1]{telen2024euler} that $\nabla_\chi$ is a closed subvariety of $Z$.

\begin{theorem}\label{thm:ED}
    The Euler discriminant of the family \eqref{eq:family of hypersurfaces} is the vanishing locus of the principal matroid determinant.
    In symbols, the identity $\nabla_\chi = \{z \in Z \, : \, E_L(z) = 0 \}$ holds true.
\end{theorem}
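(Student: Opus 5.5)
Both inclusions will go through the $D$-module $M^{\rm hyp}_L(u)$ and the results of \cite{matsubara2025hypergeometric}, which relate the Euler discriminant of a family of very affine varieties to the characteristic cycle of the associated Gauss--Manin type module. The key input from that paper is: for generic parameters $u$, the Euler discriminant $\nabla_\chi$ equals the singular locus of $M^{\rm hyp}_L(u)$. This singular locus is the projection to $Z$ of the non-zero-section components of ${\rm Char}(M^{\rm hyp}_L(u))$. That characteristic variety is in turn controlled by the conormal geometry of the stratified family, namely by the likelihood/conormal correspondence of the very affine variety $L^{-1}\cap T$ cut by $H_z$. So the proof reduces to identifying these characteristic components with the pieces ${\rm Con}(L^{-1}\cap T_F)$ appearing in \eqref{eq:W}.

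\emph{Step 1 (upper bound, $\nabla_\chi\subseteq V(E_L)$).} For $z\notin V(E_L)$, we show that the family \eqref{eq:family of hypersurfaces} is topologically locally trivial near $z$, so $\chi_z=\chi^*$. Compactify $V_z$ inside $L^{-1}=\bigsqcup_F L^{-1}\cap T_F$ (Proposition \ref{prop:stratify}) and cut by $H_z$. By Theorem \ref{thm:factorization}, $z\notin\nabla(L^{-1}\cap\Lambda_F)$ for every flat $F$. Hence $H_z$ is transverse to every stratum $L^{-1}\cap T_F$, and the coordinate hyperplanes meet these strata as Whitney strata. A Thom isotopy argument then shows that $\chi(L^{-1}\cap T\cap H_z)$ is locally constant on the complement of $V(E_L)$, which is connected. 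Alternatively, one combines Theorem \ref{thm:holonomicity} with the fact that $\psi$ from Proposition \ref{prop:psi} induces a surjection, or at least an inclusion of singular loci, ${\rm Sing}(M^{\rm hyp}_L(u))\subseteq {\rm pr}_2(W\setminus\text{zero section})=V(E_L)$. This follows by computing ${\rm Char}(M^{\rm hyp}_L(u))$ via the standard estimate for direct images: the characteristic variety of the pushforward is contained in the image of the relative conormal, which is exactly $\hat Y$.

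\emph{Step 2 (lower bound, $V(E_L)\subseteq\nabla_\chi$).} Since both sides are closed, by Corollary \ref{cor:factorization} it suffices to take, for each flat $F$ with $\nabla(L_F^{-1})$ a hypersurface, a generic point $z\in\nabla(L^{-1}\cap\Lambda_F)$ and show $\chi_z<\chi^*$. For such $z$, $H_z$ is tangent to the stratum $L^{-1}\cap T_F$ at a single point $p$ with a Morse (nondegenerate) contact, using \cite[Chapter 1, Proposition 3.2]{GKZbook} and the biduality/reflexivity of $L_F^{-1}$. We then compute the Euler characteristic jump via the local Euler obstruction/stratified Morse theory. Equivalently, in the language of \cite{matsubara2025hypergeometric}, $\chi^*-\chi_z$ equals the sum of the multiplicities of the characteristic cycle components ${\rm Con}(L^{-1}\cap T_F)$ lying over $z$. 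It therefore suffices to show that each ${\rm Con}(L^{-1}\cap T_F)$ occurs in ${\rm CC}$ of the pushforward of the constant sheaf on the very affine family with \emph{positive} multiplicity. For $F$ the full ground set this is a nodal-degeneration count: the vanishing cycle at a Morse point drops $\chi_z$ by one. For proper flats the multiplicity is the local Euler characteristic of the normal slice, which is a complement of a central arrangement of type $M(L)/F$ in a very affine variety. That local Euler characteristic is, up to sign, a Möbius invariant $\mu$ of the contraction, hence nonzero because the matroid has no loops.

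\emph{Main obstacle.} The hardest step is the positivity of the multiplicities for proper flats in Step 2. A priori, contributions from different strata could cancel, and the Euler characteristic need not detect the singularity. I would first try to use the signed-Euler-characteristic framework: Huh's nonnegativity \cite{huh2013maximum} together with the fact that all characteristic cycle components of a perverse sheaf have nonnegative multiplicity. Applied to the perverse pushforward of the constant sheaf on $\{f\neq 0\}\subset(\mathbb{P}^d\setminus\mathcal{A})\times Z$, this gives a sign-coherent sum. It would then remain only to show that each multiplicity is nonzero, which I would reduce to $\mu\neq0$ for the relevant minor.
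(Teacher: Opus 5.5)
\textbf{The gap is in Step 2.} The step you flag as the main obstacle is not resolved, and the resolution you sketch is wrong. The multiplicity of ${\rm Con}(L^{-1}\cap T_F)$ in a characteristic cycle is governed by normal Morse data (stalk versus complex link). It is not the Euler characteristic of the normal slice of $L^{-1}\cap T$, and it is not $\pm\mu$ of the contraction.

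Take the Cayley cubic of Example~\ref{ex:intro} and $F=\{i\}$:
\begin{itemize}
\item The germ of $L^{-1}$ at $e_i$ is a node. Its tangent cone is the cone over the reciprocal conic of the contraction $M(L)/\{i\}\cong U_{2,3}$, and removing the coordinate hyperplanes deletes three rulings.
\item The open part of the slice is then a $\mathbb{C}^\times$-fibration with Euler characteristic $0$. The complement of a central arrangement also has Euler characteristic $0$, so under either reading the quantity you name is not $\pm\mu$.
\item The complex link is an affine conic minus three points, with Euler characteristic $-3$. The multiplicity is therefore $3$.
\end{itemize}
This matches the drop $\chi^*-\chi_z=3$ recorded as $\langle z_i\rangle_3$ in the paper's Euler stratification table, whereas $\mu(U_{2,3})=2$. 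So nonvanishing for proper flats is unproved along your route. Proving it would need a genuine local structure theorem for $L^{-1}$ along $T_F$ plus a complex-link computation.

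There is also a missing bridge. ${\rm Con}(L^{-1}\cap T_F)$ lives in $T^*Z^\vee$, on the $x$-side. The characteristic cycle of a pushforward along $\pi$ lives in $T^*Z$, and you never explain how one becomes the other. In addition, $R\pi_*$ of the constant sheaf need not be perverse, which is why one twists by a generic rank-one local system.

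\textbf{What the paper does instead, and the problems with Step 1.} The paper supplies exactly this bridge and gets positivity for free, in three moves:
\begin{itemize}
\item Theorem~\ref{thm:integral rep2} identifies $M^{\rm hyp}_L(u)$ with $\FL^{-1}(\int_\iota\mathscr{M}(u))$, the inverse Fourier transform of a monodromic module on the cone over $L^{-1}\cap T$.
\item Brylinski's theorem \cite{brylinski1982transformations} then identifies ${\rm Char}(M^{\rm hyp}_L(u))$ with ${\rm Char}(\int_\iota\mathscr{M}(u))$ under $T^*Z=T^*Z^\vee$ (Corollary~\ref{cor:ED=sing}).
\item \cite[Theorem 3.4]{FraneckiKapranov} computes the latter, for generic $u$, as the specialization $\lim_{s\to0}\Lambda^\#_s$. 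A curve-germ analysis shows this equals $\bigsqcup_F{\rm Con}(L^{-1}\cap T_F)$ as a set (Theorem~\ref{thm:characteristic variety}).
\end{itemize}
This set-theoretic equality, combined with Theorem~\ref{thm:factorization}, yields both inclusions at once without computing any multiplicity.

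Your Step 1 does not substitute for this:
\begin{itemize}
\item Whitney regularity of the flat stratification of $L^{-1}$ is asserted, not proved.
\item Surjectivity of $\psi$ is not known; it is part of Conjecture~\ref{conj:Integral representation}.
\item The estimate bounding ${\rm Char}$ of a direct image by the image of the relative conormal requires properness, which $\pi$ lacks. The boundary contributions it ignores are precisely those of the proper flats.
\end{itemize}
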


In what follows, we shall utilize the functorial realization of the $D_Z$-module $M^{\rm hyp}_L(u)$ introduced in Section \ref{sec:5}.
We use standard notation for various functors on $D$-modules as in \cite{hotta2007d}.
In particular, for a smooth affine variety $Y$, $D_Y$ is the ring of differential operators on $Y$.
Moreover, given a multivalued function $\varphi$ on $Y$ such that $\frac{d\varphi}{\varphi}\in\Omega^1_Y$, we write $\mathcal{O}_Y\varphi$ for the $D_Y$-module which is isomorphic to $\mathcal{O}_Y$ as an $\mathcal{O}_Y$-module, and $D_Y$ acts as follows:
\[
\partial\left( f\varphi\right)=\left(\partial f+\frac{\partial\varphi}{\varphi}\right)\varphi,\quad\quad f\in\mathcal{O}_Y,
\]
where $\partial$ is any global vector field on $Y$.
Let $X$ be the variety $((\mathbb{P}^d\setminus\mathcal{A})\times Z)\setminus  V_{(\mathbb{P}^d\setminus\mathcal{A})\times Z}(f)$ and let $\pi:X\to Z$ be the coordinate projection. 
Here $f(t,z) = \sum_{i = 0}^n z_i \, \ell_i(t)^{-1}$ is viewed as a regular function on the product space $(\mathbb{P}^d\setminus\mathcal{A})\times Z$. 
By \cite[Proposition 1.5.28 (i)]{hotta2007d}, there is an isomorphism of $D_Z$-modules 
$$M_L^{\rm hyp}(u)\, \simeq \, H^0\int_{\pi}\mathcal{O}_X\, f(t,z)^{-s-d-1}\prod_{j=0}^n\ell_j(t)^{u_j} \, = \, H^0 \int_\pi {\cal O}_X \, \varphi(t,z),$$
where $s$ and $u$ are related as in \eqref{eq:homogeneity}, and $\varphi(t,z) = f^{-s-d-1} \prod_{i=0}^n\ell_i^{u_i}$ is the multivalued function from Equation \eqref{eq:eulerint}. 
This is the \emph{hypergeometric $D$-module} from \cite{matsubara2025hypergeometric}.

The first step in proving Theorem \ref{thm:ED} is to identify $\nabla_\chi$ as the singular locus of $M^{\rm hyp}_L(u)$.
The statement of the following Proposition uses the \emph{beta invariant} $\beta(L)$ of the matroid $M(L)$.

\begin{proposition}\label{prop:local cohomology}
    For generic $u\in \C^{n+1}$, the holonomic rank of $M^{\rm hyp}_L(u)$ is $\tilde{\chi}^*:=\chi^*+\beta(L)$ and the singular locus of $M^{\rm hyp}_L(u)$ is the Euler discriminant locus $\nabla_\chi$ from \eqref{eq:eulerdisc}.
\end{proposition}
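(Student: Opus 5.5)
The plan is to reduce both claims to a fibrewise computation of twisted de Rham cohomology, and then to invoke the general results of \cite{matsubara2025hypergeometric} on hypergeometric $D$-modules of the form $H^0\int_\pi \mathcal{O}_X\varphi$.

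\textbf{Stalk at a point.} Fix $z \in Z$. By base change for the direct image $\int_\pi$, the fibre of $M^{\rm hyp}_L(u)$ at $z$ (more precisely, its restriction to $z$) is computed by the twisted de Rham cohomology of the fibre. This fibre is
\[ X_z \,=\, \{ t \in \mathbb{P}^d\setminus\mathcal{A} \,:\, f(t,z)\neq 0\} \,=\, (\mathbb{P}^d\setminus \mathcal{A})\setminus V_z, \]
with the rank one local system given by $\varphi(\cdot,z)$.

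\textbf{Rank for generic $u$.} For generic exponents $u$, the twisted cohomology of the very affine variety $X_z$ is concentrated in degree $d$. Its dimension is $|\chi(X_z)|$. This is the standard genericity/vanishing theorem for very affine varieties with generic characters, and it is the input used in \cite{matsubara2025hypergeometric}. Additivity of the Euler characteristic gives
\[ \chi(X_z) \,=\, \chi(\mathbb{P}^d\setminus\mathcal{A}) - \chi(V_z). \]
We have $(-1)^d\chi(\mathbb{P}^d\setminus\mathcal{A}) = \beta(L)$, which is a known value for the complement of a projective arrangement; up to the normalization of $\mu$ versus $\beta$, it is the reduced characteristic polynomial evaluated at $1$. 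Together with $\chi_z = (-1)^{d-1}\chi(V_z)$, this gives
\[ (-1)^d\chi(X_z) \,=\, \beta(L) + \chi_z. \]
At a generic point $z$, the holonomic rank equals the generic fibre dimension, so it equals $\beta(L)+\chi^*$.

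\textbf{Singular locus.} Here I would apply the main theorem of \cite{matsubara2025hypergeometric}. For generic exponents, the singular locus of $H^0\int_\pi \mathcal{O}_X\varphi$ equals the locus where the Euler characteristic of the fibre $X_z$ drops below its generic value. The argument behind it has two halves.

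One inclusion: on the complement of $\nabla_\chi$ the fibre Euler characteristic is constant. Combined with the constructibility results of \cite{telen2024euler}, this shows the pushforward is a local system there. Since the fibre cohomology is concentrated in one degree, $M^{\rm hyp}_L(u)$ is smooth (a vector bundle with flat connection) off $\nabla_\chi$.

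Reverse inclusion: at a point $z_0$ with $\chi_{z_0}<\chi^*$, the solution rank jumps. For generic $u$, the restriction of the module to $z_0$ has dimension equal to $|\chi(X_{z_0})|$, which is smaller than the generic rank. A holonomic module that is smooth near $z_0$ would have constant rank there, so $z_0$ lies in the singular locus.

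\textbf{Main obstacle.} The hardest step is justifying, uniformly in $z$, that for generic $u$ the restriction to each point $z$ has cohomology concentrated in degree $d$ with dimension $|\chi(X_z)|$. The issue is that the fibres over special $z$ are singular, and "generic" has to be chosen to avoid resonance for all strata at once. I would handle this by appealing directly to the hypotheses of \cite{matsubara2025hypergeometric}, checking that our family fits them. Concretely, this means checking three things: $X\to Z$ is a family of very affine varieties; the character $\varphi$ involves the independent exponents $u_j$ together with $-s-d-1$; and genericity of $u$ implies genericity of the full exponent vector, because $s$ is determined by \eqref{eq:homogeneity} without constraining the $u_j$.
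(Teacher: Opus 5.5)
Your Euler-characteristic bookkeeping is correct and matches \eqref{eq:chi and chi tilde}. The map $\ell^{-1}$ identifies $X_z$ with $(L^{-1}\cap T)\setminus H_z$, so your $(-1)^d\chi(X_z)$ is the paper's $\tilde\chi_z$, and the rank at a generic $z$ comes out right. The gap is in passing from fibres to the module $M^{\rm hyp}_L(u)=H^0\int_\pi\mathcal{O}_X\varphi$ at special points $z$. Base change computes the restriction to $z$ of the \emph{complex} $\int_\pi\mathcal{O}_X\varphi$. That tells you something about its $H^0$ only if the complex is concentrated in a single degree. Otherwise a drop of $\chi_z$ along $\nabla_\chi$ could be carried by some $H^j\int_\pi\mathcal{O}_X\varphi$ with $j\neq 0$ supported there while $H^0$ stays smooth, and singularities of $H^0$ could cancel against those of other cohomology modules.

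Securing this concentration is exactly what the paper's proof does. It re-embeds $X$ as $X'=\{(\tau,x,z):\tau=\langle x,z\rangle,\ x\in L^{-1}\}\subset\C^\times\times T\times Z$ and writes $\int_\pi\mathcal{O}_X\varphi\simeq\int_{\pi'}\mathcal{B}_{X'|\C^\times\times T\times Z}\otimes\tau^{-s-d-1}\prod_j x_j^{-u_j}$. This puts the module in the setting of \cite{matsubara2025hypergeometric}: complements of hyperplane sections $H_z$ of a fixed very affine variety, with $z$ entering linearly through $\langle x,z\rangle$. Concentration in degree $0$ then comes from \cite[Lemma 2.4]{matsubara2025hypergeometric}, and only afterwards are the rank formula and \cite[Corollary 2.13]{matsubara2025hypergeometric} invoked. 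The hypotheses you propose to check (very affine fibres, independent exponents) are not what those results require. Alternatively, you can read off concentration from Theorem~\ref{thm:integral rep2}, whose proof does not use this proposition: $\int_\iota$ for an affine open embedding, direct image along a closed embedding, and the Fourier transform are all exact.

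Your sketch of ${\rm Sing}\subseteq\nabla_\chi$ also fails as stated. Constancy of $z\mapsto\chi_z$ off $\nabla_\chi$, even combined with constructibility from \cite{telen2024euler}, does not make $\pi$ locally trivial or its direct image a local system. What works is Kashiwara's index theorem, cf.\ \eqref{eq:Kashiwara}. Once $\int_\pi\mathcal{O}_X\varphi$ is a single module, its local solution Euler characteristic at \emph{every} $z$ equals $\pm\chi(X_z)$, for every $u$, since a rank-one twist does not change Euler characteristics. This constructible function determines the characteristic cycle, and a module has positive multiplicity along each component of its characteristic variety. Hence constancy on an open set forces the characteristic variety to be the zero section there, while a drop at $z_0$ rules out smoothness near $z_0$.

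The obstacle you single out is therefore misplaced. The fibres $X_z$ are open in $\mathbb{P}^d\setminus\mathcal{A}$ and hence always smooth. No uniform-in-$z$ vanishing of fibre cohomology is needed for the singular locus. What is needed is the global concentration of $\int_\pi\mathcal{O}_X\varphi$ in one degree.
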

\begin{proof}
We define a closed subvariety $X'\subset (\C^\times)\times T\times Z$ of pure codimension $n-d+1$:
$$
X' \, := \, \{ (\tau,x,z)\in (\C^\times)\times T\times Z \, : \,  \tau=\langle x,z\rangle, \, x\in L^{-1}\} \, \simeq \, X.
$$
The {\it local cohomology} $\mathcal{B}_{X'|(\C^\times)\times T\times Z}$ is $H^0\int_{\iota'}\mathcal{O}_{X'}$ where $\iota':X'\to (\C^\times)\times T\times Z$ is the natural embedding.
Let $\pi':(\C^\times)\times T\times Z\to Z$ denote the coordinate projection. 
A discussion analogous to \cite[Proposition 3.1]{matsubara2025hypergeometric} shows that there is an isomorphism 
$$\int_{\pi}\mathcal{O}_X \, \varphi(t,z) \simeq \int_{\pi'}(\mathcal{B}_{Y|(\C^\times)\times T\times Z})\otimes\mathcal{O}_{(\C^\times)\times T\times Z} \, \tau^{-s-d-1}\prod_{j=0}^nx_j^{-u_j}.$$
This also proves that these complexes of $D_Z$-modules are concentrated in degree $0$ by \cite[Lemma 2.4]{matsubara2025hypergeometric}.
The Euler characteristic $\tilde{\chi}_z:=(-1)^d\chi(L^{-1}\cap T\setminus H_z)$ is related to $\chi_z$ as follows:
\begin{equation}\label{eq:chi and chi tilde}
   \hspace{-.7em} \chi_z=(-1)^{d-1}\chi(L^{-1}\cap T\cap H_z)=(-1)^{d-1}(\chi(L^{-1}\cap T)-\chi(L^{-1}\cap T\setminus H_z))=\tilde{\chi}_z-\beta(L).
\end{equation}
The statement on the holonomic rank follows from the displayed formula before \cite[Lemma 2.10]{matsubara2025hypergeometric}.
Finally, the equality ${\rm Sing}(M^{\rm hyp}_L(u))=\nabla_\chi$ follows from \cite[Corollary 2.13]{matsubara2025hypergeometric}.
\end{proof}

Proposition \ref{prop:local cohomology} reduces the proof of Theorem \ref{thm:ED} to showing the identity ${\rm Sing}(M^{\rm hyp}_L(u)) = \{ z \in Z \, : \, E_L(z) = 0 \}$.
For this, our strategy is to use \cite[Theorem 3.4]{FraneckiKapranov}. To apply that theorem, we must first identify $M^{\rm hyp}_L(u)$ with another $D_Z$-module, obtained via a functorial realization of the Fourier transform \cite[Lemma 7.1.4]{katz1985transformation}. 
We start from the canonical identification 
\begin{equation}\label{eq:identification}
    T^*Z\, = \, Z^\vee\times Z \, = \, T^*Z^\vee.
\end{equation}
Let $p_1:Z^\vee\times Z\to Z^\vee$ and $p_2:Z^\vee\times Z\to Z$ be the canonical projections, and let $\langle x,z\rangle$ denote the duality pairing of $x\in Z^\vee$ and $z\in Z$.
Then, the inverse Fourier transform is defined by
\[
\FL^{-1}(M)\, := \,  \int_{p_2}p_1^\dagger M\otimes \mathcal{O}_{Z^\vee\times Z}e^{-\langle x,z\rangle}
\]
for any holonomic $D_{Z^\vee}$-module $M$.
Below, we write $\ell^{-1}:{\rm Cone}(\P^d\setminus\mathcal{A})\to{\rm Cone}(L^{-1}\cap T)$ for the map defined by $t=(t_0,\dots,t_d)\mapsto (\ell_0(t)^{-1},\dots,\ell_n(t)^{-1})$ and $j:{\rm Cone}(L^{-1}\cap T)\to(\C^\times)^{n+1}$ is the natural inclusion.
Furthermore, let $\iota:(\C^\times)^{n+1}\to Z^\vee$ be the natural inclusion.

\begin{theorem}\label{thm:integral rep2}
    Assume that $s\notin \mathbb{Z}$.
    Then, there is a canonical isomorphism     \begin{equation}\label{eq:Euler integral D-module}
        \FL^{-1}\left(\int_{\iota}\mathscr{M}(u)\right)   \simeq M^{\rm hyp}_L(u),\quad \text{where}\quad \mathscr{M}(u):=\left( \int_{j\circ\ell^{-1}}\mathcal{O}_{{\rm Cone}(\P^d\setminus\mathcal{A})}\right)\otimes\mathcal{O}_{(\C^\times)^{n+1}}\prod_{j=0}^nx_j^{-u_j}.
    \end{equation}
\end{theorem}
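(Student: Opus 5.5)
We compute both sides as direct images and compare them by base change and a Laplace-type integral. On the left, $\int_\iota \mathscr{M}(u)$ is the direct image of $\mathcal{O}\,\prod_j x_j^{-u_j}$ along the composite $\iota\circ j\circ \ell^{-1}:{\rm Cone}(\P^d\setminus\mathcal{A})\to Z^\vee$, since twisting by $\prod x_j^{-u_j}$ commutes with direct image along the torus embedding $j\circ\ell^{-1}$ (projection formula). Write $\kappa=\iota\circ j\circ\ell^{-1}$. Applying $\FL^{-1}$ and using base change for $p_1^\dagger$ together with the projection formula, we get
\[ \FL^{-1}\Big(\int_\iota\mathscr{M}(u)\Big)\simeq \int_{q}\mathcal{O}_{{\rm Cone}(\P^d\setminus\mathcal{A})\times Z}\,\prod_{j=0}^n\ell_j(t)^{u_j}\,e^{-f(t,z)}, \]
where $q:{\rm Cone}(\P^d\setminus\mathcal{A})\times Z\to Z$ is the projection. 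Here we use $\langle \ell^{-1}(t),z\rangle=f(t,z)$ and $x_j^{-u_j}=\ell_j(t)^{u_j}$. This is the $D$-module incarnation of the integral $\int e^{-f}\prod\ell_j^{u_j}\,dt$ over the affine cone.

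The second step is to integrate out the radial $\C^\times$-direction of the cone, which is where $s\notin\mathbb{Z}$ enters. We choose the splitting ${\rm Cone}(\P^d\setminus\mathcal{A})\simeq \C^\times_\lambda\times(\P^d\setminus\mathcal{A})$, at least locally over affine charts, and preferably globally by writing the cone as a $\C^\times$-torsor and working equivariantly. In this splitting, $f(\lambda t,z)=\lambda^{-1}f(t,z)$ and $\prod\ell_j(\lambda t)^{u_j}=\lambda^{\sum u_j}\prod\ell_j(t)^{u_j}$. After the substitution $\mu=\lambda^{-1}f(t,z)$, which is legitimate on the locus $f\neq 0$, the integrand becomes
\[ \mu^{-\sum u_j-d-1}e^{-\mu}\cdot f^{\sum u_j+d+1}\prod\ell_j^{u_j}=\mu^{s}e^{-\mu}\cdot \varphi(t,z), \]
using \eqref{eq:homogeneity}. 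The one-dimensional de Rham cohomology $H^\bullet(\C^\times_\mu,\ \nabla=d+(s/\mu-1)d\mu)$ is one-dimensional in degree $1$ and vanishes otherwise. This is the algebraic $\Gamma$-function computation, and it holds for every $s$ with the given twist. This yields $\int_\pi\mathcal{O}_X\varphi$, which equals $M^{\rm hyp}_L(u)$ by \cite[Proposition 1.5.28 (i)]{hotta2007d}.

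The remaining issue is the locus $f=0$ inside ${\rm Cone}(\P^d\setminus\mathcal{A})\times Z$. There, the exponential factor restricted to each radial fiber is $e^{0}$, and the fiber contributes $\int_{\C^\times}\mathcal{O}\,\lambda^{\sum u_j+d+1}=\int\mathcal{O}\,\lambda^{-s}$. This vanishes exactly because $s\notin\mathbb{Z}$. I would formalize the argument by decomposing via the open embedding of $\{f\neq0\}$ and its closed complement, and showing that the direct image of the closed piece is zero by the above fiberwise (Kashiwara/base-change) argument. I expect this step, together with making the radial change of variables $\mu=\lambda^{-1}f$ rigorous as an isomorphism of $D$-modules rather than of integrals, to be the main obstacle. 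A cleaner route would be to verify it on global sections, since everything is affine, by writing both sides as cohomology of explicit twisted de Rham complexes and checking that the map $[\eta]\mapsto[\int\mu^{s}e^{-\mu}\cdots]$ is compatible with the $\partial_i$-actions. Compatibility follows from $\partial_i e^{-f}=-\ell_i^{-1}e^{-f}$, matching the action on $M^{\rm hyp}_L(u)$ up to the factor $-(s+d+1)$ absorbed by $\Gamma$-function shifts. Canonicity follows because each step is a natural base-change or projection isomorphism.
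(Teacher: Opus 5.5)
Your proposal is correct and is essentially the paper's proof. The same composition, projection-formula and base-change steps give $\int_q\mathcal{O}\,e^{-f}\prod_{j}\ell_j(t)^{u_j}$. The radial $\mathbb{C}^\times$-direction is then integrated out, with $s\notin\mathbb{Z}$ killing the contribution of $\{f=0\}$.

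The paper makes the splitting global, so no charts are needed: it takes $\ell_0=t_0$ and the automorphism $g(t_0,\dots,t_d,z)=(t_0,t_0t_1,\dots,t_0t_d,z)$. It then simply cites \cite[Proposition 2.1]{matsubara2020euler} for the radial integration that you sketch. If you prove that step yourself, note that $\{f=0\}$ is typically singular. Rather than invoking Kashiwara's theorem, it is cleaner to pull back the one-variable computation for $\mathcal{O}\,e^{-w/\lambda}\lambda^{-s-d-1}$ on $\mathbb{C}^\times_\lambda\times\mathbb{C}_w$ along $w=f$ by base change.
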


\begin{proof}

To prove the isomorphism \eqref{eq:Euler integral D-module}, we consider a Cartesian diagram
    $$
    \xymatrix{
        {\rm Cone}(\P^d\setminus\mathcal{A})\times Z\ar[r]^-{\tilde{j}}\ar[d]_-{\tilde{p}}&Z^\vee\times Z\ar[d]^{p_1}\\
        {\rm Cone}(\P^d\setminus\mathcal{A})\ar[r]^-{\iota\circ j\circ\ell^{-1}}&Z^\vee,
    }
    $$
    where $\tilde{p}$ is the projection and $\tilde{j}(t,z):=(\ell^{-1}(t),z)$.
    We choose homogeneous coordinates $t_0,\dots,t_d$ so that $\ell_0(t)=t_0$.
    Let $g:{\rm Cone}(\P^d\setminus\mathcal{A})\times Z\to {\rm Cone}(\P^d\setminus\mathcal{A})\times Z$ be given by $g(t_0,\dots,t_d,z)=(t_0,t_0t_1,\dots,t_0t_d,z)$.
    Using the relation $p_2\circ\tilde{j}\circ g=p_2\circ\tilde{j}$, the composition property of the direct image functor, the projection formula, and the base change formula \cite[Proposition 1.5.21, Corollary 1.7.5 and Theorem 1.7.3]{hotta2007d}, we obtain the following isomorphisms:
    \begin{fleqn}[-5pt]
        \begin{eqnarray*}
        \vspace{.7em}
            &&\ \ \ \ {\FL}^{-1}\left( \int_{\iota}\mathscr{M}(u)\right)\nonumber\\
            \vspace{.7em}
            &(\because\quad \text{composition property})&\simeq\,\,{\FL}^{-1}\left( \int_{\iota\circ j\circ\ell^{-1}}\mathcal{O}_{{\rm Cone}(\P^d\setminus\mathcal{A})}\displaystyle\prod_{j=0}^n\ell_j(t)^{u_j}\right)\nonumber\\
            \vspace{.7em}
            &(\because\quad \text{projection  formula})&\simeq\int_{p_2}\mathcal{O}_{Z^\vee\times Z}e^{-\langle x,z\rangle}\otimes p_1^\dagger\left[ \int_{\iota\circ j\circ\ell^{-1}}\mathcal{O}_{{\rm Cone}(\P^d\setminus\mathcal{A})}\displaystyle\prod_{j=0}^n\ell_j(t)^{u_j}\right] \nonumber\\
            \vspace{.7em}
            &(\because\quad \text{base change formula})\quad&\simeq \int_{p_2}\mathcal{O}_{Z^\vee\times Z}e^{-\langle x,z\rangle}\otimes \int_{\tilde{j}}\tilde{p}^\dagger\left[ \mathcal{O}_{{\rm Cone}(\P^d\setminus\mathcal{A})}\displaystyle\prod_{j=0}^n\ell_j(t)^{u_j}\right]\nonumber\\
            \vspace{.7em}
            &(\because\quad \text{projection formula})\quad&\simeq\int_{p_2}\circ \int_{\tilde{j}}\mathcal{O}_{{\rm Cone}(\P^d\setminus\mathcal{A})\times Z}e^{-\sum_{j=0}^n\frac{z_j}{\ell_j(t)}}\displaystyle\prod_{j=0}^n\ell_j(t)^{u_j}\nonumber\\
            \vspace{.7em}
            &\left(\because\quad \text{composition property}\right)&\simeq\int_{p_2\circ \tilde{j}}g^\dagger\left[\mathcal{O}_{{\rm Cone}(\P^d\setminus\mathcal{A})\times Z}e^{-\sum_{j=0}^n\frac{z_j}{\ell_j(t)}}\displaystyle\prod_{j=0}^n\ell_j(t)^{u_j}\right]\nonumber\\
            \vspace{.7em}&&\simeq\int_{p_2\circ\tilde{j}}\mathcal{O}_{{\rm Cone}(\P^d\setminus\mathcal{A})\times Z}e^{-t_0^{-1}\left(z_0+\sum_{j=1}^n\frac{z_j}{\ell_j(1,t')}\right)}t_0^{-s-d-1}\displaystyle\prod_{j=0}^n\ell_j(1,t')^{u_j}\nonumber\\
            \vspace{.7em}
            &&\simeq\int_{\pi}\mathcal{O}_X f(t;z)^{-s-d-1}\prod_{j=0}^n\ell_j(t)^{u_j}.
            \end{eqnarray*}
\end{fleqn}
    In the last step, we used a standard transformation of $D$-modules (see, e.g., \cite[Proposition 2.1]{matsubara2020euler}).
    This proves the isomorphism \eqref{eq:Euler integral D-module}.
\end{proof}

We say that a subset $C\subset Z^\vee\times Z$ is \emph{biconic} if it is stable under the action of $\C^\times\times\C^\times$ on $Z^\vee\times Z$ given by the component-wise scaling.

\begin{corollary}\label{cor:ED=sing}
    For generic choices of $u\in\C^{n+1}$, 
    the characteristic variety ${\rm Char}(\int_\iota\mathscr{M}(u))$ is biconic and identified with ${\rm Char}(M^{\rm hyp}_L(u))$ via \eqref{eq:identification}.
    Moreover, the identity $p_2(\overline{{\rm Char}(\int_\iota\mathscr{M}(u))\setminus T_Z^*Z})=\nabla_\chi$ holds true, where the overline is the closure in $T^*Z=Z^\vee\times Z$. 
\end{corollary}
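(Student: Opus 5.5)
We would prove the corollary in three steps: establish biconicity, transfer characteristic varieties through the inverse Fourier transform, and then read off the singular locus.

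\textbf{Biconicity.} We first show that ${\rm Char}(\int_\iota\mathscr{M}(u))$ is biconic. The module $\mathscr{M}(u)$ is the direct image of ${\cal O}$ along the map $j\circ\ell^{-1}$, twisted by the monomial $\prod_j x_j^{-u_j}$. The source ${\rm Cone}(\mathbb{P}^d\setminus{\cal A})$ carries a $\C^\times$-action, and $j\circ\ell^{-1}$ is equivariant for scaling by $\lambda^{-1}$. Hence $\mathscr{M}(u)$ and $\int_\iota\mathscr{M}(u)$ are monodromic, i.e.\ $\C^\times$-equivariant up to the twist by the character $\lambda^{\sum u_j}$.

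For a monodromic holonomic $D_{Z^\vee}$-module, the characteristic variety is stable under scaling of the base coordinate $x$. It is always stable under scaling of the fibre coordinate $z$, since it is conic in the cotangent directions. Together these give biconicity. We would invoke Brylinski's theorem: for monodromic modules, the Fourier transform preserves characteristic varieties under the identification \eqref{eq:identification}, i.e.\ ${\rm Char}(\FL^{-1}(N))={\rm Char}(N)$ once $T^*Z^\vee$ is identified with $T^*Z$. (This is also the setting of \cite[Theorem 3.4]{FraneckiKapranov}.)

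\textbf{Transfer to $M^{\rm hyp}_L(u)$.} By Theorem~\ref{thm:integral rep2}, for $u$ generic (in particular $s\notin\mathbb{Z}$) we have $\FL^{-1}(\int_\iota\mathscr{M}(u))\simeq M^{\rm hyp}_L(u)$. Combined with Brylinski's theorem, this yields ${\rm Char}(M^{\rm hyp}_L(u))={\rm Char}(\int_\iota\mathscr{M}(u))$ under \eqref{eq:identification}. Genericity of $u$ may also be needed to ensure that $\int_\iota$ does not create extra components supported on coordinate hyperplanes. Even if it did, such components would be conormals to torus orbit closures, which are still biconic, so the identification would survive.

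\textbf{Reading off the singular locus.} For a holonomic $D_Z$-module $N$, the singular locus is $p_2$ of the closure of ${\rm Char}(N)\setminus T^*_ZZ$. The subtlety is the convention: here the zero section $T^*_ZZ$ is the set $\{x=0\}$, where $x$ plays the role of the cotangent coordinate on $T^*Z$. Removing it and projecting along $p_2$ to $Z$ gives exactly ${\rm Sing}(M^{\rm hyp}_L(u))$, as in the definition via saturation by $\langle x_0,\dots,x_n\rangle$ in Section~\ref{sec:2}. Proposition~\ref{prop:local cohomology} identifies ${\rm Sing}(M^{\rm hyp}_L(u))$ with $\nabla_\chi$ for generic $u$. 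Since $\nabla_\chi$ is closed and the characteristic variety is biconic, the image $p_2(\overline{\cdots})$ is closed and conic, so no further closure is needed.

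\textbf{Main obstacle.} The delicate point is justifying that the Fourier transform preserves characteristic varieties. This requires monodromicity of $\int_\iota\mathscr{M}(u)$, together with regularity in the sense of Brylinski. We would deduce regularity from the fact that $\mathscr{M}(u)$ is a direct image of a regular holonomic rank-one local system with monomial twist.
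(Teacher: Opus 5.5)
Your proposal is correct. Its last two steps, transferring ${\rm Char}$ through Theorem~\ref{thm:integral rep2} and reading off $\nabla_\chi$ via Proposition~\ref{prop:local cohomology}, coincide with the paper's. The difference is in how you obtain monodromicity, regularity and biconicity.

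\emph{Your route.} You get monodromicity directly from the geometry:
\begin{itemize}
\item $\ell^{-1}(\lambda t)=\lambda^{-1}\ell^{-1}(t)$ makes $\int_{j\circ\ell^{-1}}\mathcal{O}$ equivariant;
\item the twist $\prod_j x_j^{-u_j}$ is homogeneous;
\item $\iota$ is equivariant.
\end{itemize}
Regularity of $\int_\iota\mathscr{M}(u)$ follows because direct images and tensor products preserve regular holonomicity. You then need only the forward direction of Brylinski's result: for a monodromic regular holonomic module, the Fourier transform preserves ${\rm Char}$ under \eqref{eq:identification}. Biconicity comes from monodromicity plus fibre-conicity, before any Fourier argument.

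\emph{The paper's route.} It runs in the opposite order:
\begin{itemize}
\item It first shows $M^{\rm hyp}_L(u)$ is regular holonomic, as $\int_\pi$ of the regular module $\mathcal{O}_X\varphi$.
\item Via Theorem~\ref{thm:integral rep2}, this says $\int_\iota\mathscr{M}(u)$ has a regular holonomic inverse Fourier transform.
\item It then invokes the criterion it cites as Brylinski's Theorem~7.24, which deduces monodromicity from regularity of the Fourier transform.
\item Biconicity is read off afterwards from the identification: each characteristic variety is conic in its own fibre direction, so the identified set is conic in both.
\end{itemize}

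\emph{Comparison.} Your argument is more elementary and self-contained. It avoids the converse-type monodromicity criterion and produces regularity of $M^{\rm hyp}_L(u)$ as a by-product rather than an input. The paper's argument avoids checking equivariance by hand.

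One small remark: your hedge that $\int_\iota$ might create components over coordinate hyperplanes is unnecessary. Such components do occur; they are the ${\rm Con}(L^{-1}\cap T_F)$ for proper flats in Theorem~\ref{thm:characteristic variety}. Brylinski's identification applies to the whole characteristic variety regardless, so nothing is lost.
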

\begin{proof}
The functor $\int_{\pi}$ preserves regularity \cite[Theorem 6.15]{hotta2007d}. Hence, since ${\cal O}_X \varphi$ is a regular holonomic $D_Z$-module, $M^{\rm hyp}_L(u)$ regular holonomic as well.
It follows from Theorem \ref{thm:integral rep2} and \cite[Theorem 7.24]{brylinski1982transformations} that $\mathscr{M}(u)$ is a monodromic $D_Z$-module \cite[Proposition 7.12]{brylinski1982transformations}.
Thus, Theorem \ref{thm:integral rep2} and \cite[Theorem 7.25]{brylinski1982transformations} show that the characteristic variety of $\FL^{-1}(\int_\iota\mathscr{M}(u))\simeq M^{\rm hyp}_L(u)$ is identified with ${\rm Char}\left(\int_\iota\mathscr{M}(u)\right)$ via \eqref{eq:identification}.
This also proves that ${\rm Char}(\int_\iota\mathscr{M}(u))$ is biconic.
The equality $p_2(\overline{{\rm Char}(\int_\iota\mathscr{M}(u))\setminus T_Z^*Z})=\nabla_\chi$ follows from the identity $p_2(\overline{{\rm Char}(\int_\iota\mathscr{M}(u))\setminus T_Z^*Z})={\rm Sing}(\FL^{-1}\int_\iota\mathscr{M}(u))$, Proposition \ref{prop:local cohomology} and Theorem \ref{thm:integral rep2}.
\end{proof}

To compute the characteristic variety ${\rm Char}(\int_\iota\mathscr{M}(u))$ we use \cite[Theorem 3.4]{FraneckiKapranov} (see also \cite[Theorem 3.6]{maxim2024logarithmic}).
Note that \cite[Theorem 3.4]{FraneckiKapranov} is stated in terms of characteristic varieties of constructible sheaves while our exposition uses $D$-modules.
These two viewpoints are compatible via the Riemann-Hilbert correspondence as in \cite[Theorem 11.3.3]{kashiwara2013sheaves}.

\begin{theorem}\label{thm:characteristic variety}
For generic choices of $u \in \mathbb{C}^{n+1}$, the following equality holds:    \begin{equation}\label{eq:characteristic variety}
        {\rm Char}\left(\int_{\iota}\mathscr{M}(u)\right)=\bigsqcup_{F\in\mathcal{F}(M(L))}{\rm Con}(L^{-1}\cap T_F)=\bigcup_{F\in\mathcal{F}(M(L))}{\rm Con}(L^{-1}\cap \Lambda_F).
    \end{equation}
\end{theorem}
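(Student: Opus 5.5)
We will use \cite[Theorem 3.4]{FraneckiKapranov}, which describes the characteristic cycle of a direct image of a generic rank-one local system from a very affine variety. Via the Riemann--Hilbert correspondence \cite[Theorem 11.3.3]{kashiwara2013sheaves}, $\int_\iota \mathscr{M}(u)$ corresponds, up to shift, to the pushforward $R\iota_*(j\circ\ell^{-1})_*\mathcal{L}_u$. Here $\mathcal{L}_u$ is the rank-one local system on ${\rm Cone}(L^{-1}\cap T)$ with monodromy given by $\prod_j x_j^{-u_j}$. The underlying variety ${\rm Cone}(L^{-1}\cap T)$ is a closed very affine subvariety of $(\C^\times)^{n+1}$, isomorphic to ${\rm Cone}(\mathbb{P}^d\setminus\mathcal{A})$. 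For generic $u$, the theorem says that the characteristic variety of $R\iota_*$ of such a local system is the union of the conormal varieties of the strata obtained by intersecting the closure $\overline{{\rm Cone}(L^{-1}\cap T)}\subset Z^\vee$ with the torus orbits $T_I$ of $Z^\vee$. We take only those strata that are nonempty, and conormals are taken within $Z^\vee$. The origin contributes the zero section $T^*_0$-fiber; this is consistent with the convention ${\rm Con}(L^{-1}\cap T_\emptyset)=V(x_0,\ldots,x_n)$ in \eqref{eq:W}.

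\textbf{Step 1 (hypotheses).} I would verify that the hypotheses of \cite[Theorem 3.4]{FraneckiKapranov} hold for generic $u$. The variety ${\rm Cone}(L^{-1}\cap T)$ is smooth, since it is isomorphic to ${\rm Cone}(\mathbb{P}^d\setminus\mathcal{A})$. Genericity of $u$ ensures that the local system is non-resonant along every boundary stratum. If that theorem requires a schön or tropically compactified situation, I would use the wonderful-type compactification coming from the Bergman fan of $M(L)$: since ${\rm Trop}(L^{-1})={\rm Trop}(L)$ up to sign, the tropical compactification is the standard one for hyperplane arrangement complements, which is schön.

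\textbf{Step 2 (identify strata).} Proposition \ref{prop:stratify} gives the orbit strata: the closure of ${\rm Cone}(L^{-1}\cap T)$ in $Z^\vee$ is ${\rm Cone}(L^{-1})$, and it meets $T_I$ (the cone of $T_I$) nontrivially exactly when $I$ is a flat. In that case the intersection is ${\rm Cone}(L^{-1}\cap T_F)$, which is smooth and irreducible of dimension ${\rm rank}(F)$, being the reciprocal of $L_F\cap T_F$. Therefore the strata are indexed by $\mathcal{F}(M(L))$. In multicone form, the affine conormal of ${\rm Cone}(L^{-1}\cap T_F)$ in $Z^\vee\times Z$ is ${\rm Con}(L^{-1}\cap T_F)$. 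This yields the first equality in \eqref{eq:characteristic variety}. The union is disjoint because its first projections to $Z^\vee$ lie in disjoint orbits.

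\textbf{Step 3 (second equality).} The second equality is formal. Each ${\rm Con}(L^{-1}\cap\Lambda_F)$ is the closure of ${\rm Con}(L^{-1}\cap T_F)$. A characteristic variety is closed, and by the computation in the proof of Theorem \ref{thm:holonomicity} the union $W$ of the strata conormals is already closed. Hence the union of the closures equals the disjoint union.

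\textbf{Main obstacle.} The delicate point is making sure that every stratum actually appears, with nonzero multiplicity, for generic $u$. Frenecki--Kapranov express the multiplicities through Euler characteristics, or Euler obstructions, of links and normal slices. These must be shown to be nonzero. For a stratum indexed by $F$, I would try to compute the multiplicity as a signed Euler characteristic of a very affine variety of the form $L_{/F}^{-1}\cap T$ intersected with a generic hyperplane, in the spirit of \eqref{eq:chi and chi tilde}. Such an Euler characteristic is a nonnegative matroid invariant, a beta or Möbius invariant of the contraction, and it is nonnegative by \cite{huh2013maximum}. The remaining task is to show it is strictly positive for loopless matroids. If this positivity turns out to fail for some flats, I would instead settle for the inclusion $\subseteq$ combined with the holonomicity bound from Theorem \ref{thm:holonomicity}, and then argue the reverse inclusion component by component.
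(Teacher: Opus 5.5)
There is a genuine gap. The step that carries all the content of the theorem is delegated to \cite[Theorem 3.4]{FraneckiKapranov} in a form that result does not have.

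That theorem is a version of Ginsburg's specialization formula. It does \emph{not} say that the characteristic variety of the direct image of a local system on a closed subvariety of the torus is the union of the conormals of the orbit strata of its closure. It says that the characteristic cycle of $\int_\iota\mathscr{M}(u)$ is the limit as $s\to 0$ of the deformed Lagrangian ${\rm Con}({\rm Cone}(L^{-1}\cap T))+\sum_i s_i\,{\rm d}\log x_i$. In general that limit has further components, for instance the full cotangent fibre over a singular point of a boundary stratum. So ``union of conormals of the strata'' is exactly what must be proved; it does not follow from checking hypotheses.

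Your Step 2 correctly lists the candidate strata via Proposition \ref{prop:stratify}. But nothing in Steps 1--2 shows that the limit lies in $\bigsqcup_F{\rm Con}(L^{-1}\cap T_F)$, or that it fills out each piece. The detour through a sch\"on tropical compactification is beside the point. No such hypothesis is needed, and the direct image is taken into the fixed ambient $Z^\vee$, where ${\rm Cone}(L^{-1})$ is in general singular along the boundary (think of the four nodes of the Cayley cubic).

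What is missing is the explicit computation of the limit.
\begin{itemize}
\item By Proposition \ref{prop:reciprocalHornKapranov}, the deformed family is the image of $(x,p,s)\mapsto(x,\,p\star x^{-2}+s\star x^{-1})$, with $x\in{\rm Cone}(L^{-1}\cap T)$ and $p\in L^\perp$. Limit points are limits along arcs $x(\epsilon)$, $p(\epsilon)$, $s=\epsilon\, s(\epsilon)$.
\item Let $F$ be the set of $i$ with ${\rm ord}\,x_i(\epsilon)=0$; it is a flat by Proposition \ref{prop:stratify}.
\item For $i\notin F$, boundedness of $z_i(\epsilon)=p_i/x_i^2+\epsilon s_i/x_i$ forces ${\rm ord}\,p_i>0$. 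Hence $p(0)\in L^\perp$ is supported on $F$, i.e.\ $p(0)_F\in L_F^\perp$.
\item We have $z_F=p(0)_F\star x(0)_F^{-2}$, and ${\rm Con}(L^{-1}\cap T_F)$ imposes no condition on $z_i$ for $i\notin F$. So the limit lies in ${\rm Con}(L^{-1}\cap T_F)$.
\item Conversely, take $p=(p_F,0)$ constant with $p_F\in L_F^\perp$, and choose $s_i(\epsilon)$ for $i\notin F$ to cancel the order of $x_i(\epsilon)$. This produces every point of ${\rm Con}(L^{-1}\cap T_F)$.
\end{itemize}
This valuative argument is where the matroid structure enters, and your plan has no substitute for it.

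The obstacle you single out is not one, and your fallback is unavailable.
\begin{itemize}
\item $j\circ\ell^{-1}$ is a closed embedding and $\iota$ is an affine open embedding, so $\int_\iota\mathscr{M}(u)$ is a holonomic module. Its characteristic cycle is therefore effective, and the specialization has support equal to the set-theoretic limit. No positivity of Euler characteristics of links or slices is required.
\item Theorem \ref{thm:holonomicity} bounds ${\rm Char}(M_L(u))$, not ${\rm Char}(M^{\rm hyp}_L(u))$. Transferring that bound to $M^{\rm hyp}_L(u)\simeq\FL^{-1}(\int_\iota\mathscr{M}(u))$ would require $\psi$ to be surjective, which is only Conjecture \ref{conj:Integral representation}.
\end{itemize}
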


\begin{proof}
The second equality follows from the definition of the conormal bundle.
We prove the first equality.
Since $\mathscr{M}(u)$ is regular holonomic and ${\rm DR}_{(Z^{\vee})^{\rm an}}(\int_\iota\mathscr{M}(u))=\mathbb{R}\iota_*({\rm DR}_{(\C^\times)^{n+1}}(\mathscr{M}(u)))$, where ${\rm DR}$ stands for the analytic de Rham functor, we can apply Theorem 3.4 of \cite{FraneckiKapranov} 
    to compute 
    ${\rm Char}(\int_{\iota}\mathscr{M}(u))$. For transparency, we note that in the notation $X, U, f$ from \cite[Section 3]{FraneckiKapranov}, we apply \cite[Theorem 3.4]{FraneckiKapranov}  for $X = Z^\vee, \, U = (\mathbb{C}^\times)^{n+1} \subset X, \, f = x_0x_1 \cdots x_n$.
We define the variety
\[ \Lambda^\# \, = \, \Big \{ (x, z,s) \in U \times Z \times \mathbb{C}^{n+1} \, : \, z = \Big(z'_i + \frac{s_i}{x_i} \Big)_{i = 0, \ldots, n}, (x,z') \in {\rm Con}(L^{-1} \cap T) \Big \}.\]
By Proposition \ref{prop:reciprocalHornKapranov}, this is parametrized by 
\begin{equation} \label{eq:paramLambdasharp} (L^{-1} \cap T) \times L^\perp \times \mathbb{C}^{n+1}  \rightarrow \Lambda^\#, \quad (x,p,s) \mapsto (x, p \star x^{-2}  + s \star x^{-1}),\end{equation}
where $x^{-k} = \phi_{-k}(x)$ and $\star$ is the Hadamard product.
Our characteristic variety is given by 
\[ {\rm Char} \Big (\int_{\iota}\mathscr{M}(u) \Big ) \, = \, \lim_{s \rightarrow 0} \, \Lambda^\#_s \, = \, {\rm pr}_{Z^\vee \times Z} \, \Big(\overline{\Lambda^\# \setminus \{s = 0\}} \cap \{s = 0 \}\Big).\]
Here, the closure is in $Z^\vee \times Z \times \mathbb{C}^{n+1}$ and ${\rm pr}_{Z^\vee \times Z}$ is the coordinate projection.

Each point in $\lim_{s \rightarrow 0} \Lambda^\#_s$ obtained as the limit point of a curve segment in $\Lambda^\#$. That is, by \cite[Lemma 2.5]{sattelberger2023maximum}, a point $(x,z) \in \lim_{s \rightarrow 0} \Lambda^\#_s$ is given by 
\[ (x,z) \, = \, \lim_{\epsilon \rightarrow 0} \Big(x_i(\epsilon), \, z_i(\epsilon) \, = \, \frac{p_i(\epsilon)}{x_i(\epsilon)^2} + \frac{\epsilon s_i(\epsilon)}{x_i(\epsilon)} \Big)_{i = 0, \ldots, n}.\]
Here $x_i(\epsilon), p_i(\epsilon), s_i(\epsilon) \in \mathbb{C}[\![\epsilon]\!]$ are power series, and we used the parametrization \eqref{eq:paramLambdasharp}.
The symbol $\lim_{\epsilon\to0}$ should be interpreted in the sense of scheme theory: $x_i(\epsilon)$ is a morphism ${\rm Spec}\, (\C[\![\epsilon]\!])\to {\rm Spec}\, \C[x_i]$ and $\lim_{\epsilon\to 0}x_i(\epsilon)$ is the image of the maximal ideal of $\C[\![\epsilon]\!]$.
We have $p(\epsilon)= (p_i(\epsilon))_{i = 0, \ldots, n} \in L^\perp$.  It is useful to introduce the following additional notation: for a nonzero power series $g(\epsilon)$, let ${\rm ord}(g)$ be the order of $g$, i.e., the smallest exponent of $\epsilon$ appearing with a nonzero coefficient in $g$, and let $\tilde{g}(\epsilon)$ be defined as follows: $g(\epsilon)= \epsilon^{{\rm ord}(g)} \cdot \tilde{g}(\epsilon)$. In particular, for any $g(\epsilon) \in \mathbb{C} [\![\epsilon ]\!]$, we have $\lim_{\epsilon \rightarrow 0} \tilde{g}(\epsilon) \in \mathbb{C}^\times$.

By Proposition \ref{prop:stratify}, there exists a (possibly empty) flat $F \in {\cal F}(M(L))$ such that 
\[ (x_i(\epsilon), z_i(\epsilon)) \, = \,  \begin{cases}
    \Big (\tilde{x}_i(\epsilon), \frac{p_i(\epsilon)}{\tilde x_i(\epsilon)^2} + \frac{\epsilon  s_i(\epsilon)}{\tilde x_i(\epsilon)} \Big ) & i \in F \\
    \Big (\epsilon^{k_i}\tilde{x}_i(\epsilon), \frac{p_i(\epsilon)}{\epsilon^{2k_i} \tilde x_i(\epsilon)^2} + \epsilon^{1-k_i}\frac{s_i(\epsilon)}{\tilde x_i(\epsilon)} \Big ) & i \notin F 
\end{cases},\]
where $k_i = {\rm ord}(x_i(\epsilon))$.
For $i \in F$, we have $(x_i,z_i) = \lim_{\epsilon \rightarrow 0} (x_i(\epsilon),z_i(\epsilon)) = (x_i, p_ix_i^{-2})$. For $i \notin F$, notice that convergence of $z_i(\epsilon)$ in $Z$ for $\epsilon \to 0$ implies that 
\[ {\rm ord}(p_i(\epsilon)) \, \geq \,  \min \{ 0,1-k_i + {\rm ord}(s_i(\epsilon))\} +2k_i >0, \quad i \notin F. \] 
By continuity, $p_F = (p_i)_{i \in F}$ lies in $L_F^\perp$, so that $(x,z) \in {\rm Con}(L^{-1} \cap T_F)$. Moreover, choosing the curve $s(\epsilon)$ appropriately, it is clear that each point in ${\rm Con}(L^{-1} \cap T_F)$ arises in this way. 
\end{proof}

\begin{proof}[Proof of Theorem \ref{thm:ED}]
    We combine
Theorem \ref{thm:factorization}, Corollary \ref{cor:ED=sing} and Theorem \ref{thm:characteristic variety}.
\end{proof}

As an application of Theorem \ref{thm:characteristic variety}, we prove a formula for the {\it hypergeometric discriminant} associated with the Feynman banana integral from Example \ref{ex:banana integral} \cite{matsubara2025hypergeometric}. This is the Euler integral \eqref{eq:eulerint} corresponding to the linear space $L=\mathbb{P}{\rm Row}(A)$, with $A = \begin{pmatrix} -\sum_{i = 1}^n e_i & e_1 & \cdots & e_n \end{pmatrix}$.
By definition, the hypergeometric discriminant is the characteristic cycle of the $D_Z$-module $M^{\rm hyp}_L(u)$ for generic choices of $u$, which is indeed independent of $u$. We denote this cycle by $E^{\rm hyp}_{\texttt{B}_{n}}$ to match \cite[Section 5.1]{matsubara2025hypergeometric}, in which the following theorem is a conjecture.

\begin{theorem}
The hypergeometric discriminant of the integral ${\cal I}_n(z)$ in \eqref{eq:banana integral}, i.e., the characteristic cycle of $M_L^{\rm hyp}(u)$, is given by
\begin{equation}\label{eq:hypergeometric banana}
E^{\rm hyp}_{\texttt{B}_{n}}={\rm Con}(\nabla(L^{-1}))+m_0 \, {\rm Con}(Z)+\sum_{p=1}^{n-1}m_p \, \sum_{0\leq i_1<\cdots<i_p\leq n}{\rm Con}({V(z_{i_1},\dots,z_{i_p})}),    
\end{equation}
where $m_p=2^{n-p}-1$.
Moreover, $\nabla(L^{-1})$ is the Landau discriminant of the banana diagram $\texttt{B}_n$ as in \cite[Proposition 2]{mizera2022landau}.
\noindent
The variables $s,m_i$ in \cite[Proposition 2]{mizera2022landau} are $-z_0,\sqrt{-z_i}$ in \eqref{eq:hypergeometric banana}, and the variety $\nabla(L^{-1})$ is denoted by $V(\Delta_n)$ in the conjecture of \cite[Section 5.1]{matsubara2025hypergeometric}.
\end{theorem}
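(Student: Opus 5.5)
The plan is to determine the support of the characteristic cycle from Theorem \ref{thm:characteristic variety} and then compute each multiplicity separately. Throughout, $L=\mathbb{P}{\rm Row}(A)$ with $A = \begin{pmatrix} -\sum_{i} e_i & e_1 & \cdots & e_n\end{pmatrix}$, so $d=n-1$ and $M(L)$ is the uniform matroid $U_{n,n+1}$.

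\emph{Step 1 (support).} By Corollary \ref{cor:ED=sing}, ${\rm Char}(M^{\rm hyp}_L(u))$ is identified with ${\rm Char}(\int_\iota\mathscr{M}(u))$ via \eqref{eq:identification}. Moreover, Fourier transform of monodromic regular holonomic modules \cite{brylinski1982transformations} preserves characteristic cycles, multiplicities included, under this identification. Theorem \ref{thm:characteristic variety} then gives the components ${\rm Con}(L^{-1}\cap T_F)$ for $F\in\mathcal{F}(U_{n,n+1})$. These flats are all subsets $F$ with $|F|=p\le n-1$, together with $F=\{0,\dots,n\}$.
\begin{itemize}
\item For a proper flat $F$, the columns $A_F$ are independent, so $L_F=\Lambda_F$ and $L^{-1}\cap\Lambda_F=\Lambda_F$. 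Its conormal variety is $\{x\in \Lambda_F,\ z_i=0 \text{ for } i\in F\}$. Seen from the $Z$ side this is ${\rm Con}(V(z_{i_1},\dots,z_{i_p}))$, and for $F=\emptyset$ it is the zero section ${\rm Con}(Z)$.
\item The full flat contributes ${\rm Con}(L^{-1})={\rm Con}(\nabla(L^{-1}))$, by biduality.
\end{itemize}
This gives exactly the cycles appearing in \eqref{eq:hypergeometric banana}. Because $\nabla(L^{-1})$ coincides with the Landau variety $V(\Delta_n)$ of \cite[Proposition 2]{mizera2022landau}, the last claim follows by comparing the Horn--Kapranov type parametrization of Proposition \ref{prop:reciprocalHornKapranov} with the variable substitution $s=-z_0$, $m_i=\sqrt{-z_i}$.

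\emph{Step 2 (multiplicities).} I would compute multiplicities on the $Z^\vee$ side for the regular holonomic module $\int_\iota\mathscr{M}(u)$, whose de Rham complex is $R\iota_*$ of a generic rank-one local system on ${\rm Cone}(L^{-1}\cap T)$. For generic $u$, the index-theorem (Kashiwara/Ginsburg, or the logarithmic form in \cite{maxim2024logarithmic}) formula should read as follows. The multiplicity of ${\rm Con}(L^{-1}\cap T_F)$ equals, up to sign, the Euler characteristic of $Y\cap B_\epsilon(x)\cap\{\ell=c\}$, where $Y={\rm Cone}(L^{-1}\cap T)$, $x\in T_F$ is generic and $\ell$ is a generic linear form vanishing at $x$. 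Near $x$, the variety $L^{-1}$ is locally a product of $T_F$ with the reciprocal linear space of the contraction $M/F$; this local picture comes from the binomial-type ideal of \cite{proudfoot2006broken}. Hence the slice is the generic hyperplane complement for $M/F$. For $|F|=p$, the contraction is $U_{n-p,\,n+1-p}$. Therefore $m_p=\tilde\chi^*(U_{k,k+1})$ with $k=n-p$, and by Proposition \ref{prop:local cohomology} this equals the holonomic rank of the smaller banana system.

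\emph{Step 3 (counting).} It remains to show $\tilde\chi^*(U_{k,k+1})=2^k-1$. By Huh's theorem it suffices to count critical points of $\log f+\sum u_i\log\ell_i$ on the complement, for $\ell=(-\sum t_i,t_0,\dots,t_{k-1})$. For $z$ generic and $u\to$ large, tropical/scaling degeneration reduces the critical equations $\partial_{t_i}f=0$ to $z_{i}/t_{i}^2=z_0/(\sum t)^2$. The solutions are $t_i=\pm\sqrt{z_i}\,\lambda$, which gives $2^k$ sign patterns up to the global sign. The one pattern with $\sum\pm\sqrt{z_i}=\ldots$ degenerate must be discarded, leaving $2^k-1$. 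Checks: $p=n-1$ gives $1$, and $p=0$ gives $2^n-1$, which matches the holonomic rank ($7$ for $n=3$ in Example \ref{ex:M2}). The multiplicity $1$ for ${\rm Con}(\nabla(L^{-1}))$ is the case $F$ full, whose normal slice is a point.

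\emph{Main obstacle.} I expect the hardest part to be Step 2: justifying the local product structure of $L^{-1}$ along $T_F$ together with the applicability of the multiplicity formula for $R\iota_*$ of a generic local system. If the index formula is awkward to apply, I would instead track multiplicities through the limit $\lim_{s\to0}\Lambda^\#_s$ in the proof of Theorem \ref{thm:characteristic variety}, counting the branches $p(\epsilon)$ that converge to a generic point of each component.
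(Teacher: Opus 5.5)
Your Steps 1 and 2 take a genuinely different route from the paper. The paper obtains multiplicity one for ${\rm Con}(\nabla(L^{-1}))$ from \cite[Theorem 3.4]{FraneckiKapranov} and the closed embedding $j\circ\ell^{-1}$. It then determines the other multiplicities \emph{globally on the $Z$ side}. Kashiwara's index theorem \eqref{eq:Kashiwara} expresses $\tilde\chi_z$ through the multiplicities and Euler obstructions, which equal $1$ on the coordinate subspaces. The values $\tilde\chi_z=2^n-1$ for generic $z$ and $\tilde\chi_z=2^{n-p}$ for generic $z\in V(z_{i_1},\dots,z_{i_p})$ from \cite[Propositions 55 and 56]{BBKP} then give the recursion \eqref{eq:recursion}. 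You instead compute each multiplicity \emph{locally on the $Z^\vee$ side} as a complex-link Euler characteristic of ${\rm Cone}(L^{-1}\cap T)$, reducing $m_F$ to the generic invariant $\tilde\chi^*(M/F)$ of the contraction. This needs no analogue of \cite[Proposition 56]{BBKP}. If made rigorous, it would give $m_F=\tilde\chi^*(M/F)$ for every matroid, which bears on the paper's last open question.

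Two points in Step 2 must be stated correctly. First, the linear form has to be a generic covector conormal to the stratum at $x$ (equivalently, you cut inside a normal slice). A generic form merely vanishing at $x$ meets the stratum transversally and only reproduces the stalk. Second, you must show that the stalk term $\chi\bigl(B_\epsilon(x)\cap{\rm Cone}(L^{-1}\cap T)\bigr)$ vanishes; it does, because the normal slice is a punctured cone. The product structure is easiest to see from the parametrization rather than from the ideal, since the Proudfoot--Speyer relations are circuit polynomials, not binomials. Write $t=a+b$ with $b\in\bigcap_{i\in F}\ker\ell_i$. Then $x_i=y_i/(1+\ell_i(a)\,y_i)$ for $i\notin F$, where $y_i=1/\ell_i(b)$ are reciprocal coordinates for $M/F$, and this coordinate change preserves $\{x_i\neq 0\}$.

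The genuine gap is Step 3, which is meant to supply the key input $\tilde\chi^*(U_{k,k+1})=2^k-1$; the argument does not work. For large $u$ the $\ell_i$-terms of the critical equations dominate, not $f$. More seriously, $f$ is homogeneous of degree $-1$ in $t$, so Euler's relation puts every common zero of the $\partial_{t_i}f$ on $V_z$, i.e.\ outside the variety whose Euler characteristic you want. Concretely, $t_{i-1}=\pm\sqrt{z_i/z_0}\,(t_0+\cdots+t_{k-1})$ is consistent only if $\sqrt{z_0}=\sum_i\pm\sqrt{z_i}$, that is, only for $z\in\nabla(L^{-1})$. For generic $z$ there are no solutions at all. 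The $2^{k-1}$ sign classes enumerate branches of the discriminant (matching $\deg\nabla(L^{-1})=2^{k-1}$ from Theorem \ref{thm:uniform}), not critical points. Moreover, ``$2^k$ patterns up to global sign, minus one'' is not even internally consistent.

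You can close the gap in either of two ways. One is to cite \cite[Proposition 55]{BBKP} for the $k$-edge banana, which is the same input the paper uses for $m_0$. The other is to compute directly. With $v_i=t_{i-1}/(t_0+\cdots+t_{k-1})$ the space is $\{v\in(\mathbb{C}^\times)^k:\ \sum_i v_i=1,\ \sum_i z_i/v_i\neq z_0\}$. Apply Khovanskii's formula to the pair with Newton polytopes $P={\rm conv}(0,e_1,\dots,e_k)$ and $-P$; this pair is nondegenerate for generic $z$, with mixed volumes ${\rm MV}(P^{a},(-P)^{k-a})=\binom{k}{a}$. It gives Euler characteristic $(-1)^{k-1}-(-1)^{k}\sum_{a=1}^{k-1}\binom{k}{a}=(-1)^{k-1}(2^k-1)$, i.e.\ $\tilde\chi^*=2^k-1$. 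With that input your argument goes through.

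Finally, your justification of the Landau claim is circular as phrased, but it follows at once from Proposition \ref{prop:reciprocalHornKapranov}. Here $L^\perp$ is the point $(1:\cdots:1)$, so $\nabla(L^{-1})=L^2$, which is the locus $\sqrt{z_0}=\sum_i\pm\sqrt{z_i}$.
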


\begin{proof}
    The components appearing in \eqref{eq:hypergeometric banana} are exactly those in \eqref{eq:characteristic variety}.
    By \cite[Theorem 3.4]{FraneckiKapranov}, the multiplicity of the component ${\rm Con}(\nabla(L^{-1}))$ is the multiplicity of the corresponding component of the characteristic cycle of $\mathscr{M}(u)$.
    Since $j\circ\ell^{-1}:{\rm Cone}(\mathbb{P}^n\setminus\mathcal{A})\to (\C^\times)^{n+1}$ is a closed embedding, it follows that the multiplicity of the component ${\rm Con}(\nabla(L^{-1}))$ is one.

    To determine the other multiplicities, we use Kashiwara's index theorem \cite[Theorem~4.3.25]{dimca2004sheaves}, \cite{kashiwara1973index}.
    Let $m_{i_1\dots i_p}^p$ denote the multiplicity of the component $V(z_{i_1},\dots,z_{i_p})$ in $E^{\rm hyp}_{\texttt{B}_n}$.
    By the discussion in \cite[page 10]{matsubara2025hypergeometric},  $\tilde{\chi}_z = (-1)^d\chi(L^{-1}\cap T\setminus H_z)$ is given by the formula
    \begin{equation}\label{eq:Kashiwara}
    \tilde{\chi}_z=m_0-{\rm Eu}_{V(\Delta_n)}(z)+\sum_{p=1}^{n-1}(-1)^p\sum_{0\leq i_1<\cdots<i_p\leq n}m_{i_1\dots i_p}^p{\rm Eu}_{V(z_{i_1},\dots,z_{i_p})}(z),
    \end{equation}
    where ${\rm Eu}_Y$ denotes Euler obstruction of a subvariety $Y\subset Z$ \cite[Definition 4.1.36]{dimca2004sheaves}.
    Note that ${\rm Eu}_Y(y)=1$ for any smooth point $y\in Y$.
    By the computation in Example \ref{ex:banana integral} and \cite[Corollary 4.9]{BBKP}, we can use \cite[Proposition 55 and 56]{BBKP} to determine the multiplicities.
    Firstly, $m_0=2^{n}-1$ by \cite[Proposition 55]{BBKP}. 
    Let us fix $0\leq i_1<\cdots<i_p\leq n$ for some $1\leq p\leq n-1$ and choose a generic $z\in V(z_{i_1},\dots,z_{i_p})\setminus V(\Delta_n(z))$.
    Then, by \cite[Proposition 56]{BBKP} and \eqref{eq:Kashiwara},
    \begin{equation}\label{eq:recursion}
    2^{n-p}=\tilde{\chi}_z=2^n-1+\sum_{\emptyset\neq I\subset\{ i_1,\dots,i_p\}}(-1)^{|I|}m^{p}_{i_1\dots i_p}.    
    \end{equation}
    The relation \eqref{eq:recursion} recursively determines $m^p_{i_1\dots i_p}=m_p=2^{n-p}-1$.
\end{proof}

\begin{example}
    We continue Example \ref{ex:1.3}.
    Recall that $\chi^*=6$.
 The \emph{Euler stratification} introduced in \cite{telen2024euler} is a partition of $Z$ into a disjoint union of strata, so that the function $z \mapsto \chi_z$ is constant on each stratum. The strata are constructible \cite[Proposition 2.4]{telen2024euler}, and their closures are listed in the following table:
    \begin{center}
    \renewcommand{\arraystretch}{1.2} 
\begin{tabular}{c|l}
    ${\rm h}(\mathfrak{p})$ & \multicolumn{1}{c}{$\mathfrak{p}$}\\
    \hline
    1 & $\langle z_0\rangle_3,\langle z_1\rangle_3,\langle z_2\rangle_3,\langle z_3\rangle_3,\langle\Delta(L^{-1})\rangle_1$ \\
    \hline
    2 & $\begin{aligned}
        &\langle z_i,\, z_j\rangle_5\ \text{($0\leq i<j\leq 3$)},\quad\langle z_i,\delta(z_j,z_k,z_\ell)\rangle_4\ \text{($\{i,j,k,\ell\}=\{0,1,2,3\}$)},\;\\
        &\langle z_i - z_j,\, z_k - z_\ell\rangle_2\ \text{($\{i,j,k,\ell\}=\{0,1,2,3\}$)}
        \end{aligned}$ \\
        \hline
    3 & $\begin{aligned}
        &\langle z_i - z_j,\, z_k,\, z_\ell\rangle_6\ \text{($\{i,j,k,\ell\}=\{0,1,2,3\}$)},\quad\langle z_i,z_j,z_k\rangle_6\ \text{($0\leq i<j<k\leq 3$)}
        \end{aligned}$ 
\end{tabular}
\end{center}
Each stratum is encoded by a prime ideal $\mathfrak{p}$ of height $h(\mathfrak{p})$. The notation $\mathfrak{p}_k$ means that $\chi_z$ for a generic point $z \in V(\mathfrak{p})$ is $6-k$.  
The table also uses $\delta(x,y,z):=x^2+y^2+z^2-2xy-2xz-2yz$.
This stratification was computed using the Julia code provided in \cite{telen2024euler}. For instance, when $z$ is a generic point on the Steiner surface ${\cal S} = \nabla(L^{-1})$ from \eqref{eq:matroid discriminant for a uniform matroid}, the elliptic curve $\mathcal{E}$ has a nodal singularity and the Euler characteristic is $\chi_z=5$. The other strata are described in terms of the singular locus of the Steiner surface ${\cal S}$ as follows. For each partition $\{ i,j\}\cup\{ k,\ell\}=\{ 0,1,2,3\}$, ${\cal S}$ is singular along $V(z_i-z_j,z_k-z_\ell)$.
    There are exactly three such singular lines.
    Each of these contains two pinching points $z^{i,j}$ of the Steiner surface.
    For any $0\leq i<j\leq 3$, the point $z^{i,j}$ is given by $z_i=z_j$ and $z_k=0$ if $k\neq i,j$. We point out that, unlike in the toric case \cite[Theorem 2.36]{esterov2013discriminant}, the drop $k$ of $\chi_z = 6-k$ at a generic point $z$ of a codimension-one stratum $V(\mathfrak{p})$ is \emph{not} equal to the corresponding multiplicity $m_F$ in Corollary \ref{cor:factorization}.
\end{example}

\section{Open questions} \label{sec:questions}

Just like principal $A$-determinants, principal matroid determinants can be studied from many different perspectives. In this section, we collect some questions that are left unanswered. 

By \cite[Chapter 10, Theorem 1.2]{GKZbook}, the principal $A$-determinant factors as $E_A = \prod_{Q \preceq P_A} \Delta(X_{A_Q})^{m_Q}$, where $m_Q \geq 1$ is the multiplicity of the toric variety $X_A$ at each point of the torus orbit corresponding to the face $Q$. We conjecture the following analogous statement.

\begin{conjecture} \label{conj:multiplicities}
   Consider the factorization of $E_L$ from Corollary \ref{cor:factorization}. For each flat $F \in {\cal F}(M(L))$ such that $\nabla(L^{-1} \cap \Lambda_F)$ is a hypersurface, the exponent $m_F$ equals the local multiplicity of $L^{-2}$ along the stratum $L^{-2} \cap T_F$.
\end{conjecture}

The next conjecture characterizes dual defectivity in terms of the matroid $M(L)$. 

\begin{conjecture} \label{conj:dualdefectivity}
    The reciprocal linear space $L^{-1}$ is dual defective, i.e., $\dim \nabla (L^{-1}) < n-1$, if and only if the matroid $M(L)$ is not connected. 
\end{conjecture}

This conjecture was first proposed by Clara Briand. It has the following implication. The \emph{minimal building set} of the matroid $M(L)$, in the sense of \cite[page 470]{DeConciniProcesi1995WonderfulModels}, is the set of connected flats ${\cal B} = \{F \in {\cal F}(M(L)) \, : \, M(L_F) \text{ is connected} \}$. If Conjecture \ref{conj:dualdefectivity} is true, then the Newton polytope of the principal matroid determinant is a sum of simplices indexed by ${\cal B}$: 
\begin{equation} \label{eq:buildingset} 
{\rm Newt}(E_L) \, = \, \sum_{F \in {\cal B}} m_F \, \deg(\Delta(L_F^{-1})) \, \Delta_F,\end{equation}
see Theorem \ref{thm:newtEL}. 
For the boolean matroid, this is a \emph{nestohedron} as in \cite[Definition 6.3]{PostnikovReinerWilliams2008}.

The next conjecture pertains to the degree of $\nabla(L^{-1})$. It generalizes Theorem \ref{thm:uniform}.
\begin{conjecture} \label{conj:degree}
    If the matroid $M(L)$ is connected, then $\nabla(L^{-1})$ is a hypersurface of degree $2^d \, \beta(L)$, where $\beta (L)$ is the beta invariant of $M(L)$.
\end{conjecture}
If Conjecture \ref{conj:degree} holds, then we can set $\deg(\Delta(L_F^{-1})) = 2^d \, \beta(L)$ in \eqref{eq:buildingset}.

We have shown that $E_L$ is the Euler discriminant for a family of hypersurfaces $V_z$ in $\mathbb{P}^d\setminus {\cal A} \simeq L \cap T$ (Theorem \ref{thm:ED}). For all choices of $z$ at which $E_L(z) \neq 0$, the Euler characteristic of $V_z$ is constant. In the toric setting, the generic Euler characteristic is $\pm$ the volume of the convex hull of $A$. We do not know the generic value of $\chi(V_z)$ in the matroid setting. 

\begin{question} \label{q:genericeuler}
    What is the topological Euler characteristic of the hypersurface 
    \[ V_z \, = \, \Big \{ t \in \mathbb{P}^d \setminus {\cal A} \, : \, \frac{z_0}{\ell_0(t)} + \cdots + \frac{z_n}{\ell_n(t)} \, = \, 0 \Big \} \]
    for generic values of $z$? In particular, what is the formula in terms of matroid invariants? 
\end{question}

The (signed) Euler characteristic from Question \ref{q:genericeuler} is the \emph{maximum likelihood degree} in algebraic statistics \cite[Theorem 1.7]{huh2014likelihood}. For toric statistical models, Esterov's theorem \cite[Theorem 1.8]{esterov2013discriminant} mentioned above was rediscovered in \cite{amendola2019maximum}. In our setting, one could ask whether the complement of a hyperplane in $L^{-1} \cap T$ is a meaningful statistical model. 

The remaining questions concern the matroid hypergeometric system from Section \ref{sec:5}.

\begin{conjecture}\label{conj:Integral representation}
    For generic choices of $u\in\C^{n+1}$,  the morphism $\psi:M_L(u)\to M^{\rm hyp}_L(u)$ defined in Proposition \ref{prop:psi} is an isomorphism of $D_Z$-modules.
\end{conjecture}

Conjecture \ref{conj:Integral representation} means that any solution to $M_L(u)$ is given by an Euler integral \eqref{eq:Euler integral} for an appropriate choice of the twisted cycle $\Gamma$.
In the toric setting this is \cite[Theorem 2.10]{GKZ} where the genericity condition on the parameters $u$ is called \emph{non-resonance}. This raises the question what \emph{resonance} means in the setting of this paper. 

By our results in Section \ref{sec:6}, Conjecture \ref{conj:Integral representation}, if true, has the following consequences.

\begin{conjecture} \label{conj:bigconjecture}
    For generic choices of $u \in \mathbb{C}^{n+1}$, we have 
    \begin{enumerate}
        \item The holonomic rank of the matroid hypergeometric system $M_L(u)$ is the absolute value of the topological Euler characteristic of $(\mathbb{P}^d \setminus {\cal A}) \setminus V_z\simeq (L^{-1}\cap T)\setminus H_z$. That is, it equals $\beta(L) + (-1)^{d-1} \chi^*$, where $\chi^*$ is the generic Euler characteristic from Question \ref{q:genericeuler}.
        \item The characteristic variety of the matroid hypergeometric system $M_L(u)$ is $W$ from  \eqref{eq:W}.
        \item  The singular locus of $M_L(u)$ is the vanishing locus of the pricipal matroid determinant $V(E_L)$ in $\C^{n+1}$.
        \item $M_L(u)$ is a regular holonomic $D_Z$-module.
    \end{enumerate}
\end{conjecture}

In \cite[Chapter 3]{saito2013grobner}, the authors present a combinatorial strategy for constructing series solutions to the GKZ system. It would be interesting to develop a similar strategy for the matroid hypergeometric system $M_L(u)$.

\begin{question}
    How to construct series solutions to the differential equations \eqref{eq:op1}-\eqref{eq:op3}?
\end{question}

The multiplicities of the characteristic cycle of the GKZ system are determined in terms of a combinatorial invariant called the \emph{subdiagram volume} \cite[Theorem~5]{gel1989hypergeometric}.
We determined the characteristic variety of $M^{\rm hyp}_L(u)$ in Theorem \ref{thm:characteristic variety}, but we do not know the multiplicities.

\begin{question}
    What are the characteristic cycles of $M_L(u)$ and $M^{\rm hyp}_L(u)$?
    More specifically, what are the multiplicities of the conormal varieties ${\rm Con}(L^{-1}\cap\Lambda_F)$ in the characteristic~cycles?
\end{question}

\small

\paragraph{Acknowledgements.}
We thank Clara Briand, Leonie Kayser and Julian Weigert for useful discussions. S.-J.~M.-H.~thanks the Max Planck Institute for Mathematics in the Sciences in Leipzig for its hospitality during two research visits which made this project possible.

\bibliographystyle{abbrv}
\bibliography{references}

\bigskip
\noindent

\begin{tabular}{l}
    \textbf{Saiei-Jaeyeong Matsubara-Heo} \\
    Graduate School of Information Sciences, Tohoku University\\
    6-3-09 Aramaki-Aza-Aoba, Aoba-ku,     Sendai 980-8579, Japan \\
    \texttt{saiei@tohoku.ac.jp}
\end{tabular}

\bigskip
\noindent

\begin{tabular}{l}
    \textbf{Simon Telen} \\
    Max Planck Institute for Mathematics in the Sciences  \\
    Inselstrasse 22, 04103 Leipzig, Germany \\
    \texttt{simon.telen@mis.mpg.de}
\end{tabular}

\end{document}